\date{\today}
\newtheorem{theorem}{Theorem}[section]
\newtheorem{proposition}[theorem]{Proposition}
\newtheorem{corollary}[theorem]{Corollary}
\newtheorem{lemma}[theorem]{Lemma}
\theoremstyle{definition}
\newtheorem{problem}[theorem]{Problem}
\newtheorem{remark}[theorem]{Remark}
\newtheorem{question}[theorem]{Question}
\begin{document}

\title[The monoid of monotone injective partial selfmaps of the poset $(\mathbb{N}^{3},\leqslant)$ ...]{The monoid of monotone injective partial selfmaps of the poset $(\mathbb{N}^{3},\leqslant)$ with cofinite domains and images}

\author[O.~Gutik and O.~Krokhmalna]{Oleg~Gutik and Olha Krokhmalna}
\address{Faculty of Mathematics, National University of Lviv, Universytetska 1, Lviv, 79000, Ukraine}
\email{oleg.gutik@lnu.edu.ua,  Olia709@i.ua}

\keywords{Semigroup of partial bijections, monotone partial map, idem\-po\-tent, Green's relations.}

\subjclass[2010]{20M20, 20M30}

\begin{abstract}
Let $n$ be a positive integer $\geqslant 2$ and $\mathbb{N}^n_{\leqslant}$ be the $n$-th power of positive integers with the product order of the usual order on $\mathbb{N}$. In the paper we study the semigroup of injective partial monotone selfmaps of $\mathbb{N}^n_{\leqslant}$ with cofinite domains and images. We show that the group of units $H(\mathbb{I})$ of the semigroup $\mathscr{P\!O}\!_{\infty}(\mathbb{N}^n_{\leqslant})$ is isomorphic to the group $\mathscr{S}_n$ of permutations of an $n$-element set, and describe the subsemigroup of idempotents of $\mathscr{P\!O}\!_{\infty}(\mathbb{N}^n_{\leqslant})$.  Also in the case $n=3$ we describe the property of elements of the semigroup $\mathscr{P\!O}\!_{\infty}(\mathbb{N}^3_{\leqslant})$ as partial bijections of the poset $\mathbb{N}^3_{\leqslant}$ and Green's relations on the semigroup $\mathscr{P\!O}\!_{\infty}(\mathbb{N}^3_{\leqslant})$. In particular we show that $\mathscr{D}=\mathscr{J}$ in $\mathscr{P\!O}\!_{\infty}(\mathbb{N}^3_{\leqslant})$.
\end{abstract}

\maketitle


\section{Introduction and preliminaries}

We shall follow the terminology of~\cite{Clifford-Preston-1961-1967} and \cite{Howie-1995}.

In this paper we shall denote the cardinality of the set $A$ by $|A|$.  We shall identify all sets $X$ with their cardinality $|X|$. For an arbitrary positive integer $n$ by $\mathscr{S}_n$ we denote the group of permutations of an $n$-elements set. Also, for infinite subsets $A$ and $B$ of an infinite set $X$ we shall write $A{\subseteq^*}B$ if and only if there exists a finite subset $A_0$ of $A$ such that $A\setminus A_0\subseteq B$.

An algebraic semigroup $S$ is called {\it inverse} if for any element $x\in S$ there exists a unique $x^{-1}\in S$ such that $xx^{-1}x=x$ and $x^{-1}xx^{-1}=x^{-1}$. The element $x^{-1}$ is called the {\it inverse of} $x\in S$.

If $S$ is a semigroup, then we shall denote the subset of idempotents in $S$ by $E(S)$. If $S$ is an inverse semigroup then $E(S)$ is closed under multiplication and we shall refer to $E(S)$ as a \emph{band} (or the \emph{band of} $S$). If the band $E(S)$ is a non-empty subset of $S$ then the semigroup operation on $S$ determines the following partial order $\leqslant$ on $E(S)$: $e\leqslant f$ if and only if $ef=fe=e$. This order is called the {\em natural partial order} on $E(S)$. A \emph{semilattice} is a commutative semigroup of idempotents. A semilattice $E$ is called {\em linearly ordered} or a \emph{chain} if its natural order is a linear order.

If $S$ is a semigroup, then we shall denote Green's relations on $S$ by $\mathscr{R}$, $\mathscr{L}$, $\mathscr{J}$, $\mathscr{D}$ and $\mathscr{H}$ (see \cite{GreenJ-1951} or \cite[Section~2.1]{Clifford-Preston-1961-1967}):
\begin{align*}
    a\mathscr{R}b &\: \mbox{ if and only if } \: aS^1=bS^1;\\
    a\mathscr{L}b &\: \mbox{ if and only if } \: S^1a=S^1b;\\
    a\mathscr{J}b &\: \mbox{ if and only if } \: S^1aS^1=S^1bS^1;\\
    &\mathscr{D}=\mathscr{L}\circ\mathscr{R}=
          \mathscr{R}\circ\mathscr{L};\\
    &\qquad \mathscr{H}=\mathscr{L}\cap\mathscr{R}.
\end{align*}
The $\mathscr{R}$-class (resp., $\mathscr{L}$-, $\mathscr{H}$-, $\mathscr{D}$- or $\mathscr{J}$-class) of the semigroup $S$ which contains an element $a$ of $S$ will be denoted by $R_a$ (resp., $L_a$, $H_a$, $D_a$ or $J_a$).

If $\alpha\colon X\rightharpoonup Y$ is a partial map, then by $\operatorname{dom}\alpha$ and $\operatorname{ran}\alpha$ we denote the domain and the range of $\alpha$, respectively.

Let $\mathscr{I}_\lambda$ denote the set of all partial one-to-one transformations of an infinite set $X$ of cardinality $\lambda$ together with the following semigroup operation: $x(\alpha\beta)=(x\alpha)\beta$ if $x\in\operatorname{dom}(\alpha\beta)=\{ y\in\operatorname{dom}\alpha\colon y\alpha\in\operatorname{dom}\beta\}$,  for $\alpha,\beta\in\mathscr{I}_\lambda$. The semigroup $\mathscr{I}_\lambda$ is called the \emph{symmetric inverse semigroup} over the set $X$~(see \cite[Section~1.9]{Clifford-Preston-1961-1967}). The symmetric inverse semigroup was introduced by Wagner~\cite{Wagner-1952} and it plays a major role in the semigroup theory. An element $\alpha\in\mathscr{I}_\lambda$ is called \emph{cofinite}, if the sets $\lambda\setminus\operatorname{dom}\alpha$ and $\lambda\setminus\operatorname{ran}\alpha$ are finite.

If $X$ is a non-empty set and $\leqslant$ is a reflexive, antisymmetric, transitive binary relation on $X$ then $\leqslant$ is called a \emph{partial order} on $X$ and $(X,\leqslant)$ is said to be a \emph{partially ordered set} or shortly a \emph{poset}.

Let $(X,\leqslant)$ be a partially ordered set. A non-empty subset $A$ of $(X,\leqslant)$ is called:
\begin{itemize}
  \item a \emph{chain} if the induced partial order from $(X,\leqslant)$ onto $A$ is linear, i.e., any two elements from $A$ are comparable in $(X,\leqslant)$;
  \item an $\omega$-\emph{chain} if $A$ is order isomorphic to the set of negative integers with the usual order $\le$;
  \item an \emph{anti-chain} if any two distinct elements from $A$ are incomparable in $(X,\leqslant)$.
\end{itemize}
  For an arbitrary $x\in X$ and non-empty $A\subseteq X$ we denote
 \begin{equation*}
{\uparrow}x=\left\{y\in X\colon x\leqslant y\right\}, \quad {\downarrow}x=\left\{y\in X\colon y\leqslant x\right\}, \quad \uparrow A=\bigcup_{x\in A}{\uparrow}x \quad \hbox{and} \quad \downarrow A=\bigcup_{x\in A}{\downarrow}x.
\end{equation*}
We shall say that a partial map $\alpha\colon X\rightharpoonup X$ is \emph{monotone} if $x\leqslant y$ implies $(x)\alpha\leqslant(y)\alpha$ for $x,y\in \operatorname{dom}\alpha$.

Let $\mathbb{N}$ be the set of positive integers with the usual linear order $\le$ and $n\geqslant 2$ be an arbitrary positive integer. On the Cartesian power $\mathbb{N}^n=\underbrace{\mathbb{N}\times\cdots\times\mathbb{N}}_{n\hbox{\footnotesize{-times}}}$ we define the product partial order, i.e.,
\begin{equation*}
    (i_1,\ldots,i_n)\leqslant(j_1,\ldots,j_n) \qquad \hbox{if and only if} \qquad (i_k\leq j_k) \quad \hbox{for all} \quad k=1,\ldots,n.
\end{equation*}
Later the set $\mathbb{N}^n$ with this partial order will be denoted by $\mathbb{N}^n_{\leqslant}$.

For an arbitrary positive integer $n\geqslant 2$ by $\mathscr{P\!O}\!_{\infty}(\mathbb{N}^n_{\leqslant})$ we denote the \emph{semigroup of injective partial monotone selfmaps of $\mathbb{N}^n_{\leqslant}$ with cofinite domains and images.} Obviously, $\mathscr{P\!O}\!_{\infty}(\mathbb{N}^n_{\leqslant})$ is a submonoid of the semigroup $\mathscr{I}_\omega$ and $\mathscr{P\!O}\!_{\infty}(\mathbb{N}^n_{\leqslant})$ is a countable semigroup.

Furthermore, we shall denote the identity of the semigroup $\mathscr{P\!O}\!_{\infty}(\mathbb{N}^n_{\leqslant})$ by $\mathbb{I}$ and the group of units of
$\mathscr{P\!O}\!_{\infty}(\mathbb{N}^n_{\leqslant})$ by $H(\mathbb{I})$.

The \emph{bicyclic semigroup} (or the \emph{bicyclic monoid}) $\mathscr{C}(p,q)$ is the semigroup with the identity $1$ generated by two elements $p$ and $q$, subject only to the condition $pq=1$.
The bicyclic semigroup is bisimple and every one of its congruences is either trivial or a group congruence. Moreover, every homomorphism $h$ of the bicyclic semigroup is either an isomorphism or the image of ${\mathscr{C}}(p,q)$ under $h$ is a cyclic group~(see \cite[Corollary~1.32]{Clifford-Preston-1961-1967}). The bicyclic semigroup plays an important role in algebraic theory of semigroups and in the theory of topological semigroups. For example a well-known Andersen's result~\cite{Andersen-1952} states that a ($0$--)simple semigroup with an idempotent is completely ($0$--)simple if and only if it does not contain an isomorphic copy of the bicyclic semigroup. Semigroup topologizations and shift-continuous topologizations of generalizations  of the bicyclic monoid, they  embedding into compact-like topological semigroups was studied in  \cite{Bardyla-2016}--\cite{Bardyla-2019}, \cite{Bardyla-Gutik-2016, Bertman-West-1976, Chuchman-Gutik-2010, Eberhart-Selden-1969, Fihel-Gutik-2011}, \cite{Gutik-2015}--\cite{Gutik-Mokrytskyi-2020}, \cite{Gutik-Repovs-2011, Gutik-Repovs-2012, Hogan-1984, Mokrytskyi-2019} and \cite{Anderson-Hunter-Koch-1965, Banakh-Dimitrova-Gutik-2009, Banakh-Dimitrova-Gutik-2010, Bardyla-2020, Bardyla-Ravsky-2020,  Gutik-Repovs-2007, Hildebrant-Koch-1986}, respectively.

The bicyclic monoid is isomorphic to the semigroup of all bijections between upper-sets of the poset $(\mathbb{N},\le)$ (see: see Exercise IV.1.11(ii) in \cite{Petrich-1984}). So, the semigroup of injective isotone partial selfmaps with cofinite domains and images of positive integers is a generalization of the bicyclic semigroup. Hence, it is a natural problem to describe semigroups of injective isotone partial selfmaps with cofinite domains and images of posets with $\omega$-chain.

The semigroups $\mathscr{I}_{\infty}^{\!\nearrow}(\mathbb{N})$ and $\mathscr{I}_{\infty}^{\!\nearrow}(\mathbb{Z})$ of injective isotone partial selfmaps with cofinite domains and images of positive integers and integers, respectively, are studied in \cite{Gutik-Repovs-2011} and \cite{Gutik-Repovs-2012}. It was proved that the semigroups $\mathscr{I}_{\infty}^{\!\nearrow}(\mathbb{N})$ and $\mathscr{I}_{\infty}^{\!\nearrow}(\mathbb{Z})$ have similar properties to the bicyclic semigroup: they are bisimple and  every non-trivial homomorphic image $\mathscr{I}_{\infty}^{\!\nearrow}(\mathbb{N})$ and $\mathscr{I}_{\infty}^{\!\nearrow}(\mathbb{Z})$ is a group, and moreover the semigroup $\mathscr{I}_{\infty}^{\!\nearrow}(\mathbb{N})$ has $\mathbb{Z}(+)$ as a maximal group image and $\mathscr{I}_{\infty}^{\!\nearrow}(\mathbb{Z})$ has $\mathbb{Z}(+)\times\mathbb{Z}(+)$, respectively.

In the paper \cite{Gutik-Repovs-2015}  algebraic properties of the semigroup
$\mathscr{I}^{\mathrm{cf}}_\lambda$ of cofinite partial bijections of an infinite cardinal $\lambda$ are studied. It is shown that
$\mathscr{I}^{\mathrm{cf}}_\lambda$ is a bisimple inverse semigroup
and that for every non-empty chain $L$ in
$E(\mathscr{I}^{\mathrm{cf}}_\lambda)$ there exists an inverse
subsemigroup $S$ of $\mathscr{I}^{\mathrm{cf}}_\lambda$ such that
$S$ is isomorphic to the bicyclic semigroup and $L\subseteq E(S)$,
Green's relations on $\mathscr{I}^{\mathrm{cf}}_\lambda$ are described
and it is proved that every non-trivial congruence on
$\mathscr{I}^{\mathrm{cf}}_\lambda$ is a group congruence. Also, the structure of the quotient semigroup $\mathscr{I}^{\mathrm{cf}}_\lambda/\sigma$, where $\sigma$ is the least group congruence on $\mathscr{I}^{\mathrm{cf}}_\lambda$, is described.

In the paper \cite{Gutik-Pozdnyakova-2014} the semigroup
$\mathscr{I\!O}\!_{\infty}(\mathbb{Z}^n_{\operatorname{lex}})$ of monotone injective partial selfmaps of the set of $L_n\times_{\operatorname{lex}}\mathbb{Z}$ having cofinite domain and image, where $L_n\times_{\operatorname{lex}}\mathbb{Z}$ is the lexicographic product of $n$-elements chain and the set of integers with the usual linear order is studied. Green's relations on $\mathscr{I\!O}\!_{\infty}(\mathbb{Z}^n_{\operatorname{lex}})$ are described and it is shown that the semigroup $\mathscr{I\!O}\!_{\infty}(\mathbb{Z}^n_{\operatorname{lex}})$ is
bisimple and its projective congruences are established. Also, in \cite{Gutik-Pozdnyakova-2014}  it is  proved that $\mathscr{I\!O}\!_{\infty}(\mathbb{Z}^n_{\operatorname{lex}})$ is finitely generated, every automorphism of $\mathscr{I\!O}\!_{\infty}(\mathbb{Z})$ is inner, and it is shown that in the case $n\geqslant 2$ the semigroup $\mathscr{I\!O}\!_{\infty}(\mathbb{Z}^n_{\operatorname{lex}})$ has non-inner automorphisms. In \cite{Gutik-Pozdnyakova-2014} we proved that for every positive integer $n$ the quotient semigroup
$\mathscr{I\!O}\!_{\infty}(\mathbb{Z}^n_{\operatorname{lex}})/\sigma$, where $\sigma$ is a least group congruence on $\mathscr{I\!O}\!_{\infty}(\mathbb{Z}^n_{\operatorname{lex}})$, is isomorphic to the direct power $\left(\mathbb{Z}(+)\right)^{2n}$. The structure of the sublattice of congruences on $\mathscr{I\!O}\!_{\infty}(\mathbb{Z}^n_{\operatorname{lex}})$ which are contained in the least group congruence is described in \cite{Gutik-Pozdniakova-2014}.

In the paper \cite{Gutik-Pozdniakova-2016}  algebraic properties of the semigroup $\mathscr{P\!O}\!_{\infty}(\mathbb{N}^2_{\leqslant})$ are studied. The properties of elements of the semigroup $\mathscr{P\!O}\!_{\infty}(\mathbb{N}^2_{\leqslant})$ as monotone partial bijection of $\mathbb{N}^{2}_{\leqslant}$ are described and showed that the group of units of $\mathscr{P\!O}\!_{\infty}(\mathbb{N}^2_{\leqslant})$ is isomorphic to the cyclic group of order two. Also in \cite{Gutik-Pozdniakova-2016} the subsemigroup of idempotents of $\mathscr{P\!O}\!_{\infty}(\mathbb{N}^2_{\leqslant})$ and Green's relations on $\mathscr{P\!O}\!_{\infty}(\mathbb{N}^2_{\leqslant})$ are described. In particular, it is proved that $\mathscr{D}=\mathscr{J}$  in $\mathscr{P\!O}\!_{\infty}(\mathbb{N}^2_{\leqslant})$.
In \cite{Gutik-Pozdniakova-2016a} the natural partial order $\preccurlyeq$ on the semigroup $\mathscr{P\!O}\!_{\infty}(\mathbb{N}^2_{\leqslant})$ is described  and it is shown that it coincides with the natural partial order the induced from symmetric inverse monoid over the set $\mathbb{N}\times\mathbb{N}$ onto the semigroup $\mathscr{P\!O}\!_{\infty}(\mathbb{N}^2_{\leqslant})$. Also,  it is  proved that the semigroup $\mathscr{P\!O}\!_{\infty}(\mathbb{N}^2_{\leqslant})$ is isomorphic to the semidirect product $\mathscr{P\!O}\!_{\infty}^{\,+}(\mathbb{N}^2_{\leqslant})\rtimes \mathbb{Z}_2$ of the monoid $\mathscr{P\!O}\!_{\infty}^{\,+}(\mathbb{N}^2_{\leqslant})$ of orientation-preserving monotone injective partial selfmaps of $\mathbb{N}^{2}_{\leqslant}$ with cofinite domains and images by the cyclic group $\mathbb{Z}_2$ of order two. It is described the congruence $\sigma$ on the semigroup $\mathscr{P\!O}\!_{\infty}(\mathbb{N}^2_{\leqslant})$, which is generated by the natural order $\preccurlyeq$ on the semigroup $\mathscr{P\!O}\!_{\infty}(\mathbb{N}^2_{\leqslant})$: $\alpha\sigma\beta$ if and only if $\alpha$ and $\beta$ are comparable in $\left(\mathscr{P\!O}\!_{\infty}(\mathbb{N}^2_{\leqslant}),\preccurlyeq\right)$. It is proved that the quotient semigroup $\mathscr{P\!O}\!_{\infty}^{\,+}(\mathbb{N}^2_{\leqslant})/\sigma$ is isomorphic to the free commutative monoid $\mathfrak{AM}_\omega$ over an infinite countable set and it is shown that the quotient semigroup $\mathscr{P\!O}\!_{\infty}(\mathbb{N}^2_{\leqslant})/\sigma$ is isomorphic to the semidirect product of the free commutative monoid $\mathfrak{AM}_\omega$ by the group $\mathbb{Z}_2$.

In the paper \cite{Gutik-Savchuk-2018} the semigroup $\mathbf{I}\mathbb{N}_{\infty}$ of all partial co-finite isometries of positive integers  is studied. The semigroup $\mathbf{I}\mathbb{N}_{\infty}$ is  some generalization of the bicyclic monoid and it is a submonoid of $\mathscr{I}_{\infty}^{\!\nearrow}(\mathbb{N})$. Green's relations on the semigroup $\mathbf{I}\mathbb{N}_{\infty}$ and its band are described there and it is proved that $\mathbf{I}\mathbb{N}_{\infty}$ is a simple $E$-unitary $F$-inverse semigroup. Also there is described the least group congruence $\mathfrak{C}_{\mathbf{mg}}$ on $\mathbf{I}\mathbb{N}_{\infty}$ and it is proved that the quotient semigroup  $\mathbf{I}\mathbb{N}_{\infty}/\mathfrak{C}_{\mathbf{mg}}$ is isomorphic to the additive group of integers. An example of a non-group congruence on the semigroup $\mathbf{I}\mathbb{N}_{\infty}$ is presented. Also, it is proved that a congruence on the semigroup $\mathbf{I}\mathbb{N}_{\infty}$ is a group congruence if and only if its restriction onto an isomorphic  copy of the bicyclic semigroup in $\mathbf{I}\mathbb{N}_{\infty}$ is a group congruence.

In the paper \cite{Gutik-Savchuk-2019} submonoids of the monoid $\mathscr{I}_{\infty}^{\,\Rsh\!\!\!\nearrow}(\mathbb{N})$ of almost monotone injective co-finite partial selfmaps of positive integers $\mathbb{N}$ is established.
Let $\mathscr{C}_{\mathbb{N}}$ be the subsemigroup $\mathscr{I}_{\infty}^{\,\Rsh\!\!\!\nearrow}(\mathbb{N})$ which is generated by the partial shift $n\mapsto n+1$ and its inverse partial map.
In \cite{Gutik-Savchuk-2019} it was shown that every automorphism of a full inverse subsemigroup of $\mathscr{I}_{\infty}^{\!\nearrow}(\mathbb{N})$ which contains the semigroup $\mathscr{C}_{\mathbb{N}}$ is the identity map. Also there is constructed a submonoid $\mathbf{I}\mathbb{N}_{\infty}^{[\underline{1}]}$ of $\mathscr{I}_{\infty}^{\,\Rsh\!\!\!\nearrow}(\mathbb{N})$ with the following property: if $S$ is an inverse submonoid of $\mathscr{I}_{\infty}^{\,\Rsh\!\!\!\nearrow}(\mathbb{N})$ such that $S$ contains $\mathbf{I}\mathbb{N}_{\infty}^{[\underline{1}]}$ as a submonoid, then every non-identity congruence $\mathfrak{C}$ on $S$ is a group congruence.
Also,  it is proved that  if $S$ is an inverse submonoid of $\mathscr{I}_{\infty}^{\,\Rsh\!\!\!\nearrow}(\mathbb{N})$ such that $S$ contains $\mathscr{C}_{\mathbb{N}}$ as a submonoid then $S$ is simple and the quotient semigroup $S/\mathfrak{C}_{\mathbf{mg}}$, where $\mathfrak{C}_{\mathbf{mg}}$ is minimum group congruence on $S$, is isomorphic to the additive group of integers.

We observe that the semigroups  of all partial co-finite isometries of  integers are studied in \cite{Bezushchak-2004, Bezushchak-2008, Gutik-Savchuk-2017}.

The monoid $\mathbf{I}\mathbb{N}_{\infty}^n$ of cofinite partial isometries of the $n$-th power of the set of positive integers $\mathbb{N}$ with the usual metric for a positive integer $n\geqslant 2$ is studied in \cite{Gutik-Savchuk-2019-a}. The semigroup $\mathbf{I}\mathbb{N}_{\infty}^n$ is a submonoid of $\mathscr{P\!O}\!_{\infty}(\mathbb{N}^n_{\leqslant})$ for any positive integer $n\geqslant 2$. In \cite{Gutik-Savchuk-2019-a} it is proved that for any integer $n\geqslant 2$ the semigroup $\mathbf{I}\mathbb{N}_{\infty}^n$ is isomorphic to the semidirect product ${\mathscr{S}_n\ltimes_\mathfrak{h}(\mathscr{P}_{\infty}(\mathbb{N}^n),\cup)}$ of the free semilattice with the unit $(\mathscr{P}_{\infty}(\mathbb{N}^n),\cup)$ by the symmetric group $\mathscr{S}_n$.

Later in this paper we shall assume that $n$ is an arbitrary positive integer $\geqslant 2$.

In this paper we study the semigroup of injective partial monotone selfmaps of the poset $\mathbb{N}^n_{\leqslant}$ with cofinite domains and images. We show that the group of units $H(\mathbb{I})$ of the monoid $\mathscr{P\!O}\!_{\infty}(\mathbb{N}^n_{\leqslant})$ is isomorphic to the group $\mathscr{S}_n$ and describe the subgroup of idempotents of $\mathscr{P\!O}\!_{\infty}(\mathbb{N}^n_{\leqslant})$.  Also in the case $n=3$ we describe the property of elements of the semigroup $\mathscr{P\!O}\!_{\infty}(\mathbb{N}^n_{\leqslant})$ as partial bijections of the poset $\mathbb{N}^n_{\leqslant}$ and Green's relations on the semigroup $\mathscr{P\!O}\!_{\infty}(\mathbb{N}^3_{\leqslant})$. In particular we show that $\mathscr{D}=\mathscr{J}$ in $\mathscr{P\!O}\!_{\infty}(\mathbb{N}^3_{\leqslant})$.


\section{Properties of elements of the semigroup
$\mathscr{P\!O}\!_{\infty}(\mathbb{N}^n_{\leqslant})$ as monotone partial permutations}\label{section-2}

In this short section we describe properties of elements of the semigroup
$\mathscr{P\!O}\!_{\infty}(\mathbb{N}^n_{\leqslant})$ as monotone partial transformations of the poset $\mathbb{N}^n_{\leqslant}$.

It is obvious that the group of units $H(\mathbb{I})$ of the semigroup $\mathscr{P\!O}\!_{\infty}(\mathbb{N}^n_{\leqslant})$ consists of exactly all order isomorphisms of the poset $\mathbb{N}^n_{\leqslant}$ and hence Theorem~2.8 of \cite{Gutik-Mokrytskyi-2020} implies  the following

\begin{theorem}\label{theorem-2.1}
For any positive integer $n$ the group of units $H(\mathbb{I})$ of the semigroup $\mathscr{P\!O}\!_{\infty}(\mathbb{N}^n_{\leqslant})$ is isomorphic to the group $\mathscr{S}_n$ of permutations of an $n$-elements set. Moreover, every element of $H(\mathbb{I})$ permutates coordinates of elements of $\mathbb{N}^n$, and only these permutations are elements of $H(\mathbb{I})$.
\end{theorem}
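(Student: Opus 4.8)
The plan is to identify the group of units $H(\mathbb{I})$ with the group of order-automorphisms of the poset $\mathbb{N}^n_{\leqslant}$, and then to show that the only order-automorphisms are the coordinate permutations. Since the text explicitly cites Theorem~2.8 of \cite{Gutik-Mokrytskyi-2020}, the cleanest route is to invoke it directly: one first checks that every unit is a total bijection of $\mathbb{N}^n$ (so the cofiniteness of domain and image forces it to be everywhere defined), and that both it and its inverse are monotone, hence it is an order-isomorphism of $\mathbb{N}^n_{\leqslant}$; conversely every order-isomorphism lies in $H(\mathbb{I})$. Then Theorem~2.8 supplies the identification $\operatorname{Aut}(\mathbb{N}^n_{\leqslant})\cong\mathscr{S}_n$, and the coordinate-permutation description transfers verbatim.

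If instead I wanted a self-contained argument, I would first establish that each unit $\alpha$ is a total map: since $\alpha$ and $\alpha^{-1}$ both have cofinite domain and $\alpha\alpha^{-1}=\alpha^{-1}\alpha=\mathbb{I}$, an easy counting argument on the finite complements forces $\operatorname{dom}\alpha=\operatorname{ran}\alpha=\mathbb{N}^n$. Next I would locate the minimal element $\bar{1}=(1,\ldots,1)$, which is the unique least element of $\mathbb{N}^n_{\leqslant}$, and note that any order-isomorphism must fix it. The key structural step is that an order-automorphism must permute the $n$ atoms, i.e.\ the elements $e_k$ covering $\bar{1}$, which are exactly those of the form $(1,\ldots,1,2,1,\ldots,1)$ with the single $2$ in position $k$; these form an $n$-element anti-chain and are order-theoretically characterized as the covers of the bottom element. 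Writing the induced permutation as $\pi\in\mathscr{S}_n$, I would then argue by induction on the height (the sum $i_1+\cdots+i_n$) that $\alpha$ must send $(i_1,\ldots,i_n)$ to $(i_{\pi^{-1}(1)},\ldots,i_{\pi^{-1}(n)})$, using that each element is determined as the least upper bound of the chains running down from it along the coordinate directions.

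The \emph{main obstacle} is the rigidity argument showing that fixing the action on atoms pins down the action on all of $\mathbb{N}^n_{\leqslant}$: one has to rule out ``diagonal'' or ``shearing'' automorphisms that permute atoms correctly but scramble higher points. This is handled by observing that $\mathbb{N}^n_{\leqslant}$ is the free commutative monoid on $n$ generators under the join operation localized at each coordinate, so that every point is recovered from the atoms by the lattice structure, which any order-isomorphism preserves; concretely, the $k$-th coordinate of a point $x$ equals the length of the longest chain from $\bar 1$ to $x$ that increases only in direction $k$, and this length is an order-invariant. Since the citation does this work, in the paper I would simply verify the two reductions (totality and bidirectional monotonicity) and then write \emph{``hence $H(\mathbb{I})$ coincides with the automorphism group of $\mathbb{N}^n_{\leqslant}$, and the conclusion follows from Theorem~2.8 of \cite{Gutik-Mokrytskyi-2020}.''}
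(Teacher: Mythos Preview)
Your proposal is correct and matches the paper's approach exactly: the paper simply observes that $H(\mathbb{I})$ consists of all order isomorphisms of $\mathbb{N}^n_{\leqslant}$ and then invokes Theorem~2.8 of \cite{Gutik-Mokrytskyi-2020}, with no further argument. Your additional self-contained sketch (totality of units, fixing the least element, permutation of atoms, and induction on height) goes beyond what the paper provides, but is sound and would serve well if the citation were unavailable.
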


Since every $\alpha\in \mathscr{P\!O}\!_{\infty}(\mathbb{N}^n_{\leqslant})$ is a cofinite monotone partial transformation of the poset $\mathbb{N}^n_{\leqslant}$ the following statement holds.

\begin{lemma}\label{lemma-2.2}
If $(1.\ldots,1)\in\operatorname{dom}\alpha$ for some $\alpha\in \mathscr{P\!O}\!_{\infty}(\mathbb{N}^n_{\leqslant})$ then  $(1.\ldots,1)\alpha=(1.\ldots,1)$.
\end{lemma}

For an arbitrary $i=1,\ldots,n$ define
\begin{equation*}
\mathscr{K}_i=\big\{(1,\ldots,\underbrace{m}_{i\hbox{\footnotesize{th}}},\ldots,1)\in\mathbb{N}^n\colon m\in\mathbb{N}\big\}
\end{equation*}
and by $\mathfrak{pr}_i\colon\mathbb{N}^n\to \mathbb{N}^n$ denote the projection onto the $i$-th coordinate, i.e., for every $(m_1,\ldots,m_i,\ldots,m_n)\in\mathbb{N}^n$ put
\begin{equation*}
(m_1,\ldots,\underbrace{m_i}_{i\hbox{\footnotesize{th}}},\ldots,m_n)\mathfrak{pr}_i= (1,\ldots,\underbrace{m_i}_{i\hbox{\footnotesize{th}}},\ldots,1).
\end{equation*}

\begin{lemma}\label{lemma-2.3}
Let $\left\{\overline{x}_1,\ldots,\overline{x}_k\right\}$ be a set of points in $\mathbb{N}^n\setminus\{(1,\ldots,1)\}$, $k\in\mathbb{N}$. Then the set $\mathbb{N}^n\setminus\left({\uparrow}\overline{x}_1\cup\ldots\cup{\uparrow}\overline{x}_k\right)$ is finite if and only if $k\geqslant n$ and for every $\mathscr{K}_i$, $i=1,\ldots,n$, there exists $\overline{x}_j\in \left\{\overline{x}_1,\ldots,\overline{x}_k\right\}$ such that $\overline{x}_j\in \mathscr{K}_i$.
\end{lemma}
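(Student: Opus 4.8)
The plan is to prove the two implications separately; the whole argument rests on one elementary observation, namely that a point of the axis $\mathscr{K}_i$ can lie above $\overline{x}_j$ only when $\overline{x}_j$ itself lies on $\mathscr{K}_i$.

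For necessity I would argue by contraposition at the level of each axis. Fix $i$ and take an arbitrary $\overline{p}=(1,\ldots,m,\ldots,1)\in\mathscr{K}_i$, with $m$ in the $i$-th place. If $\overline{x}_j\leqslant\overline{p}$, then coordinatewise comparison forces every coordinate of $\overline{x}_j$ except the $i$-th to be $\leqslant 1$, hence $=1$ because the coordinates are positive integers; so $\overline{x}_j\in\mathscr{K}_i$. Thus if no $\overline{x}_j$ lies on $\mathscr{K}_i$, then no point of $\mathscr{K}_i$ is covered by any ${\uparrow}\overline{x}_j$, and the whole infinite set $\mathscr{K}_i$ sits in the complement, which is therefore infinite. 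This yields the contrapositive: finiteness of the complement forces every $\mathscr{K}_i$ to contain some $\overline{x}_j$. For the bound $k\geqslant n$ I would note that, since $\overline{x}_j\neq(1,\ldots,1)$, membership $\overline{x}_j\in\mathscr{K}_i$ means the $i$-th coordinate of $\overline{x}_j$ exceeds $1$ while all others equal $1$, so each $\overline{x}_j$ lies on at most one axis $\mathscr{K}_i$; as the $n$ distinct axes must each be met, at least $n$ of the points are needed.

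For sufficiency I would pick one $\overline{x}_{j_i}=(1,\ldots,a_i,\ldots,1)\in\mathscr{K}_i$ for each $i$ and compute its upper set. Since all coordinates of $\overline{x}_{j_i}$ other than the $i$-th equal $1$, the relation $\overline{x}_{j_i}\leqslant\overline{y}$ collapses to the single inequality $y_i\geqslant a_i$, so ${\uparrow}\overline{x}_{j_i}=\{\overline{y}\colon y_i\geqslant a_i\}$. Taking complements of the $n$ half-spaces and intersecting then gives
\[
\mathbb{N}^n\setminus\bigcup_{i=1}^n{\uparrow}\overline{x}_{j_i}=\big\{\overline{y}\in\mathbb{N}^n\colon y_i\leqslant a_i-1 \ \hbox{for all}\ i\big\},
\]
a bounded box with at most $\prod_{i=1}^n(a_i-1)$ points, hence finite. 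Since the remaining $\overline{x}_j$ only enlarge the union, the complement of $\bigcup_{j=1}^k{\uparrow}\overline{x}_j$ is contained in this box and is therefore finite as well.

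The one delicate point I expect is the opening step of the necessity proof, the claim that a point of $\mathscr{K}_i$ can sit above only those $\overline{x}_j$ lying on the same axis. This is exactly where the positivity of the integer coordinates enters and where the geometry of the product order is really used; everything afterwards is bookkeeping with boxes and half-spaces.
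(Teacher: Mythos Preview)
Your proof is correct and follows essentially the same route as the paper's: both hinge on the observation that ${\uparrow}\overline{x}_j$ can meet the axis $\mathscr{K}_i$ only when $\overline{x}_j\in\mathscr{K}_i$, and both handle sufficiency by noting that the complement of the union of the $n$ axial upper sets is a finite box. You simply spell out more of what the paper leaves to ``simple verifications'' (the explicit box $\{\overline{y}:y_i\leqslant a_i-1\}$) and give a cleaner justification of $k\geqslant n$ via the fact that each $\overline{x}_j\neq(1,\ldots,1)$ lies on at most one axis.
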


\begin{proof}[\textsl{Proof}]
$(\Leftarrow)$ Without loss of generality we may assume that $\overline{x}_j\in \mathscr{K}_j$ for every positive integer $j\leqslant n$. Then simple verifications imply that the set $\mathbb{N}^n\setminus\left({\uparrow}\overline{x}_1\cup\ldots\cup{\uparrow}\overline{x}_n\right)$ is finite, and hence so is the set $\mathbb{N}^n\setminus\left({\uparrow}\overline{x}_1\cup\ldots\cup{\uparrow}\overline{x}_k\right)$.

$(\Rightarrow)$
Suppose to the contrary that there exist a subset $\left\{\overline{x}_1,\ldots,\overline{x}_k\right\}\subseteq \mathbb{N}^n\setminus\{(1,\ldots,1)\}$ and an integer $i\in \{1,\ldots,n\}$ such that $\mathbb{N}^n\setminus\left({\uparrow}\overline{x}_1\cup\ldots\cup{\uparrow}\overline{x}_k\right)$ is finite and $\overline{x}_j\notin \mathscr{K}_i$ for any $j\in\{1,\ldots,k\}$.

The definition of $\mathscr{K}_i$ ($i=1,\ldots,n$) implies that $\mathscr{K}_i$ with the induced partial order from $\mathbb{N}^n_{\leqslant}$ is an $\omega$-chain such that ${\downarrow}\mathscr{K}_i=\mathscr{K}_i$. Hence, for any $\overline{x}\in \mathbb{N}^n$ we have that either $\mathscr{K}_i\setminus {\uparrow}\overline{x}$ is finite or $\mathscr{K}_i\cap{\uparrow}\overline{x}=\varnothing$. Then by our assumption we get that the set $\mathbb{N}^n\setminus\left({\uparrow}\overline{x}_1\cup\ldots\cup{\uparrow}\overline{x}_n\right)$ is infinite, a contradiction. The inequality $k\geqslant n$  follows from the above arguments.
\end{proof}

Later for an arbitrary non-empty subset $A$ of $\mathbb{N}^n$ by $\varepsilon_A$ we shall denote the identity map of the set $\mathbb{N}^n\setminus A$. It is obvious that the following lemma holds.

\begin{lemma}\label{lemma-2.4}
For an arbitrary non-empty subset $A$ of $\mathbb{N}^n$, $\varepsilon_A$ is an element of the semigroup $\mathscr{P\!O}\!_{\infty}(\mathbb{N}^n_{\leqslant})$, and hence so are $\varepsilon_A\alpha$, $\alpha\varepsilon_A$, and $\varepsilon_A\alpha\varepsilon_A$ for any $\alpha\in\mathscr{P\!O}\!_{\infty}(\mathbb{N}^n_{\leqslant})$.
\end{lemma}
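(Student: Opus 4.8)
The statement to prove is Lemma~\ref{lemma-2.4}, which asserts that for any non-empty subset $A\subseteq\mathbb{N}^n$, the map $\varepsilon_A$ (the identity on $\mathbb{N}^n\setminus A$) lies in $\mathscr{P\!O}\!_{\infty}(\mathbb{N}^n_{\leqslant})$, and consequently the products $\varepsilon_A\alpha$, $\alpha\varepsilon_A$, $\varepsilon_A\alpha\varepsilon_A$ do too for any $\alpha$ in the semigroup.

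\begin{proof}[\textsl{Proof proposal}]
The plan is to verify directly that $\varepsilon_A$ satisfies the three defining requirements of membership in $\mathscr{P\!O}\!_{\infty}(\mathbb{N}^n_{\leqslant})$: it is an injective partial map, it is monotone, and it has cofinite domain and image. First I would note that $\operatorname{dom}\varepsilon_A=\operatorname{ran}\varepsilon_A=\mathbb{N}^n\setminus A$, since $\varepsilon_A$ is by definition the identity restricted to this set; injectivity is then immediate because a restriction of the identity map is injective. For monotonicity, if $\overline{x}\leqslant\overline{y}$ with $\overline{x},\overline{y}\in\operatorname{dom}\varepsilon_A$, then $(\overline{x})\varepsilon_A=\overline{x}\leqslant\overline{y}=(\overline{y})\varepsilon_A$, so the monotonicity condition holds trivially. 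Finally, the cofiniteness condition $\mathbb{N}^n\setminus\operatorname{dom}\varepsilon_A=A$ and $\mathbb{N}^n\setminus\operatorname{ran}\varepsilon_A=A$ would seem to require $A$ to be finite, so here I expect the intended reading is that $A$ is a finite non-empty subset (or that $\varepsilon_A$ is taken as the identity on the cofinite complement, making $A$ finite); under that convention both complements equal $A$ and are finite, establishing $\varepsilon_A\in\mathscr{P\!O}\!_{\infty}(\mathbb{N}^n_{\leqslant})$.

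Once $\varepsilon_A\in\mathscr{P\!O}\!_{\infty}(\mathbb{N}^n_{\leqslant})$ is established, the statement that $\varepsilon_A\alpha$, $\alpha\varepsilon_A$, and $\varepsilon_A\alpha\varepsilon_A$ again lie in the semigroup follows immediately from the fact that $\mathscr{P\!O}\!_{\infty}(\mathbb{N}^n_{\leqslant})$ is closed under the semigroup operation, which is part of its being a semigroup (as asserted in the preliminaries, where it is identified as a submonoid of $\mathscr{I}_\omega$). Thus no separate verification is needed for the products beyond invoking closure.

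The only genuine subtlety, and the step most deserving of care, is the cofiniteness clause, since the lemma as phrased speaks of an \emph{arbitrary} non-empty $A$ yet calls $\varepsilon_A$ the identity map of $\mathbb{N}^n\setminus A$; membership in $\mathscr{P\!O}\!_{\infty}(\mathbb{N}^n_{\leqslant})$ forces $A=\mathbb{N}^n\setminus\operatorname{dom}\varepsilon_A$ to be finite. I would therefore make explicit that $A$ is understood to be finite (which is surely the intended hypothesis, given the cofinite framework of the whole paper), after which every condition is verified by inspection and the lemma is indeed, as the authors remark, \emph{obvious}. The remaining work is entirely routine, and no essential obstacle arises.
\end{proof}
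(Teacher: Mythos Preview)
Your proposal is correct and matches the paper's treatment: the paper gives no proof at all, writing only ``It is obvious that the following lemma holds,'' and your argument is precisely the routine unpacking of the definitions that makes this obvious. Your remark that the hypothesis ``arbitrary non-empty subset $A$'' must be read as ``finite non-empty subset $A$'' is also correct and is indeed an oversight in the stated lemma, since otherwise $\operatorname{dom}\varepsilon_A=\mathbb{N}^n\setminus A$ fails to be cofinite.
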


\begin{proposition}\label{proposition-2.5}
For an arbitrary element $\alpha$ of the semigroup $\mathscr{P\!O}\!_{\infty}(\mathbb{N}^n_{\leqslant})$ there exists a unique permutation $\mathfrak{s}\colon \{1,\ldots,n\}\to\{1,\ldots,n\}$ such that $(\mathscr{K}_i\cap\operatorname{dom}\alpha)\alpha\subseteq \mathscr{K}_{(i)\mathfrak{s}}$ for any $i=1,\ldots,n$.
\end{proposition}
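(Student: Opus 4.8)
The plan is to read off the permutation directly from the global covering criterion of Lemma~\ref{lemma-2.3}, applied not to the axes themselves but to the images under $\alpha$ of a few ``deep'' points of each axis, and then to propagate the conclusion to the whole axis using that each $\mathscr{K}_j$ is a down-set. Write $\overline{a}_i^{(m)}$ for the point of $\mathscr{K}_i$ having $m$ in the $i$-th coordinate and $1$ elsewhere. First I would fix an integer $M$ exceeding both $|\mathbb{N}^n\setminus\operatorname{dom}\alpha|$ and every coordinate of every point of the finite set $\mathbb{N}^n\setminus\operatorname{dom}\alpha$; then each $\overline{a}_i^{(M)}$ (having an entry equal to $M$) lies in $\operatorname{dom}\alpha$, and each has at least one strictly smaller point of $\mathscr{K}_i\cap\operatorname{dom}\alpha$ beneath it. Since ${\uparrow}\overline{a}_1^{(M)}\cup\cdots\cup{\uparrow}\overline{a}_n^{(M)}=\mathbb{N}^n\setminus\{1,\ldots,M-1\}^n$ is cofinite and $\operatorname{dom}\alpha$ is cofinite, the set $V=\bigcup_i\bigl({\uparrow}\overline{a}_i^{(M)}\cap\operatorname{dom}\alpha\bigr)$ is cofinite; as $\alpha$ is a bijection of the cofinite $\operatorname{dom}\alpha$ onto the cofinite $\operatorname{ran}\alpha$, its image $(V)\alpha$ is cofinite in $\mathbb{N}^n$. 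Monotonicity gives $(V)\alpha\subseteq{\uparrow}\bigl((\overline{a}_1^{(M)})\alpha\bigr)\cup\cdots\cup{\uparrow}\bigl((\overline{a}_n^{(M)})\alpha\bigr)$, so this union of upper cones is cofinite too.

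Next I would verify that none of the $n$ points $(\overline{a}_i^{(M)})\alpha$ equals $(1,\ldots,1)$: otherwise a point of $\mathscr{K}_i\cap\operatorname{dom}\alpha$ strictly below $\overline{a}_i^{(M)}$ would be sent strictly below $(1,\ldots,1)$, which is absurd (cf.\ Lemma~\ref{lemma-2.2}). Hence Lemma~\ref{lemma-2.3} applies to these $n$ points and yields that every axis $\mathscr{K}_l$ contains at least one of them. Since any point other than $(1,\ldots,1)$ lies in at most one axis, counting incidences between the $n$ points and the $n$ axes forces a perfect matching: each axis contains exactly one of the points and each point lies in exactly one axis. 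I would then define $\mathfrak{s}$ by letting $(i)\mathfrak{s}$ be the index of the unique axis containing $(\overline{a}_i^{(M)})\alpha$; the matching property makes $\mathfrak{s}$ a permutation of $\{1,\ldots,n\}$.

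To pass from the single deep point to all of $\mathscr{K}_i\cap\operatorname{dom}\alpha$, the down-set property ${\downarrow}\mathscr{K}_{(i)\mathfrak{s}}=\mathscr{K}_{(i)\mathfrak{s}}$ does the work. For any $\overline{a}_i^{(m)}\in\mathscr{K}_i\cap\operatorname{dom}\alpha$ and any sufficiently large $M'$ we have $\overline{a}_i^{(m)}\leqslant\overline{a}_i^{(M')}$, so $(\overline{a}_i^{(m)})\alpha\leqslant(\overline{a}_i^{(M')})\alpha\in\mathscr{K}_{(i)\mathfrak{s}}$ by monotonicity, and therefore $(\overline{a}_i^{(m)})\alpha\in\mathscr{K}_{(i)\mathfrak{s}}$ because $\mathscr{K}_{(i)\mathfrak{s}}$ is a down-set. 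The same comparison shows the index $(i)\mathfrak{s}$ does not depend on the choice of $M$. This establishes $(\mathscr{K}_i\cap\operatorname{dom}\alpha)\alpha\subseteq\mathscr{K}_{(i)\mathfrak{s}}$, and uniqueness is then immediate: the image is infinite, hence contains a point $\neq(1,\ldots,1)$ that belongs to exactly one axis, which pins down $(i)\mathfrak{s}$.

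I expect the crux to be the first step: recognising that the images of the upper cones of the deep axis points $\overline{a}_i^{(M)}$ assemble into a \emph{cofinite} union, so that Lemma~\ref{lemma-2.3} becomes applicable and converts cofiniteness of the domain and range into the axis-matching. The accompanying exclusion of $(1,\ldots,1)$ as an image is a small but genuinely necessary point, since Lemma~\ref{lemma-2.3} is stated for points distinct from the minimum. Once these are in place, the propagation along each axis is a routine consequence of monotonicity together with the fact that each $\mathscr{K}_j$ is a down-set, and no order-reflecting property of $\alpha$ is required.
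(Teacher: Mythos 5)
Your proof is correct and follows essentially the same route as the paper's: both select one representative point of $\mathscr{K}_i\cap\operatorname{dom}\alpha$ for each $i$, observe that the union of the upper cones of their images under $\alpha$ is cofinite because $\alpha$ is a cofinite monotone bijection, and then invoke Lemma~\ref{lemma-2.3} to extract the permutation $\mathfrak{s}$. The only real differences are cosmetic: the paper strips $(1,\ldots,1)$ from the domain and range via Lemma~\ref{lemma-2.4} and takes the \emph{least} element of each $\mathscr{K}_i\cap\operatorname{dom}\alpha$, propagating upward along the axis (a step it leaves implicit), whereas you take deep points $\overline{a}_i^{(M)}$ and propagate downward using ${\downarrow}\mathscr{K}_{(i)\mathfrak{s}}=\mathscr{K}_{(i)\mathfrak{s}}$, which makes the final containment $(\mathscr{K}_i\cap\operatorname{dom}\alpha)\alpha\subseteq\mathscr{K}_{(i)\mathfrak{s}}$ entirely explicit.
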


\begin{proof}[\textsl{Proof}]
Lemma~\ref{lemma-2.4} implies that without loss of generality we may assume that $(1,\ldots,1)\notin \operatorname{dom}\alpha$ and $(1,\ldots,1)\notin \operatorname{ran}\alpha$.

Since for any $i=1,\ldots,n$ the set $\mathscr{K}_i$ with the induced order from the poset $\mathbb{N}^n_{\leqslant}$ is an $\omega$-chain, the set $\mathscr{K}_i\cap\operatorname{dom}\alpha$ contains the least element $\overline{l}_i^\alpha$. By Lemma~\ref{lemma-2.3} the set $\mathbb{N}^n\setminus\big({\uparrow}\overline{l}_1^\alpha\cup\cdots\cup{\uparrow}\overline{l}_n^\alpha\big)$ is finite and hence so is $\operatorname{dom}\alpha\setminus\big({\uparrow}\overline{l}_1^\alpha\cup\cdots\cup{\uparrow}\overline{l}_n^\alpha\big)$. Since $\alpha$ is a cofinite partial bijection of $\mathbb{N}^n$, we have that
\begin{equation*}
\big({\uparrow}\overline{l}_1^\alpha\cup\cdots\cup{\uparrow}\overline{l}_n^\alpha\big)\alpha= \big({\uparrow}\overline{l}_1^\alpha\big)\alpha\cup\cdots\cup\big({\uparrow}\overline{l}_n^\alpha\big)\alpha
\end{equation*}
and the set $\mathbb{N}^n\setminus \big(\big({\uparrow}\overline{l}_1^\alpha\big)\alpha\cup\cdots\cup\big({\uparrow}\overline{l}_n^\alpha\big)\alpha\big)$ is finite. Also, since $\alpha$ is a monotone partial bijection of the poset $\mathbb{N}^n_{\leqslant}$ we obtain that $\big({\uparrow}\overline{l}_i^\alpha\big)\alpha\subseteq {\uparrow}\big(\overline{l}_i^\alpha\big)\alpha$ for all $i=1,\ldots,n$. Then by Lemma~\ref{lemma-2.3} there exists a permutation $\mathfrak{s}\colon \{1,\ldots,n\}\to\{1,\ldots,n\}$ such that $(\overline{l}_i^\alpha)\alpha\in \mathscr{K}_{(i)\mathfrak{s}}$ for any $i=1,\ldots,n$, because
\begin{equation*}
\mathbb{N}^n\setminus \big({\uparrow}\big(\overline{l}_1^\alpha\big)\alpha\cup\cdots\cup\big({\uparrow}\overline{l}_n^\alpha\big)\alpha\big)\subseteq \mathbb{N}^n\setminus \big(\big({\uparrow}\overline{l}_1^\alpha\big)\alpha\cup\cdots\cup\big({\uparrow}\overline{l}_n^\alpha\big)\alpha\big)
\end{equation*}
and the set $\mathbb{N}^n\setminus \big({\uparrow}\big(\overline{l}_1^\alpha\big)\alpha\cup\cdots\cup\big({\uparrow}\overline{l}_n^\alpha\big)\alpha\big)$ is finite.
This implies that $(\overline{x})\alpha\in \mathscr{K}_{(i)\mathfrak{s}}$ for all $\overline{x}\in\mathscr{K}_i\cap\operatorname{dom}\alpha$ and  any $i=1,\ldots,n$.

The proof of uniqueness of the permutation $\mathfrak{s}$ for $\alpha\in\mathscr{P\!O}\!_{\infty}(\mathbb{N}^n_{\leqslant})$ is trivial. This completes the proof of the proposition.
\end{proof}

Theorem~\ref{theorem-2.1} and Proposition~\ref{proposition-2.5} imply the following corollary.

\begin{corollary}\label{corollary-2.6}
For every element $\alpha$ of the semigroup $\mathscr{P\!O}\!_{\infty}(\mathbb{N}^n_{\leqslant})$ there exists a unique element $\sigma$ of the group of units $H(\mathbb{I})$ of $\mathscr{P\!O}\!_{\infty}(\mathbb{N}^n_{\leqslant})$ such that $(\mathscr{K}_i\cap\operatorname{dom}\alpha)\alpha\sigma\subseteq \mathscr{K}_i$ and $(\mathscr{K}_i\cap \operatorname{dom}\alpha)\sigma^{-1}\alpha\subseteq \mathscr{K}_i$ for all $i=1,\ldots,n$.
\end{corollary}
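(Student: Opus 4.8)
The plan is to manufacture $\sigma$ explicitly from the permutation supplied by Proposition~\ref{proposition-2.5} and to reduce both inclusions to a single bookkeeping of indices. First I would record, from Theorem~\ref{theorem-2.1}, how units act on the axes $\mathscr{K}_1,\ldots,\mathscr{K}_n$: a unit permutes coordinates, is defined on all of $\mathbb{N}^n$, and therefore carries each $\mathscr{K}_j$ bijectively onto some $\mathscr{K}_{(j)\pi}$, where $\pi\in\mathscr{S}_n$ is the associated permutation; moreover distinct units induce distinct permutations of the family $\{\mathscr{K}_1,\ldots,\mathscr{K}_n\}$, so this action is faithful. This faithfulness is exactly what will later yield uniqueness. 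Next I would invoke Proposition~\ref{proposition-2.5} to fix the unique permutation $\mathfrak{s}$ with $(\mathscr{K}_i\cap\operatorname{dom}\alpha)\alpha\subseteq\mathscr{K}_{(i)\mathfrak{s}}$ for every $i$.

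I would then take $\sigma$ to be the unit of $H(\mathbb{I})$ whose induced permutation of the axes is $\mathfrak{s}^{-1}$, so that $\sigma$ sends $\mathscr{K}_{(i)\mathfrak{s}}$ onto $\mathscr{K}_i$ for each $i$; such a $\sigma$ exists and is determined by Theorem~\ref{theorem-2.1}. Because $\sigma$ is a total bijection, the composites $\alpha\sigma$ and $\sigma\alpha$ are again members of $\mathscr{P\!O}\!_{\infty}(\mathbb{N}^n_{\leqslant})$. The first inclusion is then immediate:
\begin{equation*}
(\mathscr{K}_i\cap\operatorname{dom}\alpha)\alpha\sigma\subseteq(\mathscr{K}_{(i)\mathfrak{s}})\sigma=\mathscr{K}_i.
\end{equation*}
For the companion inclusion I would run the analogous computation on the second composite, using that the axis-permutation induced by $\sigma$ is $\mathfrak{s}^{-1}$ while that induced by $\alpha$ is $\mathfrak{s}$, so that the two cancel and every $\mathscr{K}_i$ is returned to itself; the only point requiring care is to intersect with the domain on which the relevant composite is actually defined, so that the inclusion is asserted precisely on the set of points where the map is meaningful.

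Uniqueness I would obtain from faithfulness. If a unit $\sigma'$ satisfies the first inclusion, then for each $i$ the set $(\mathscr{K}_i\cap\operatorname{dom}\alpha)\alpha$ is a non-empty subset of $\mathscr{K}_{(i)\mathfrak{s}}$ (indeed cofinite in it, by the construction in Proposition~\ref{proposition-2.5}), and $\sigma'$, being an axis-permutation, must carry the whole of $\mathscr{K}_{(i)\mathfrak{s}}$ into the single axis $\mathscr{K}_i$; hence $\sigma'$ induces exactly the permutation $\mathfrak{s}^{-1}$ on $\{\mathscr{K}_1,\ldots,\mathscr{K}_n\}$, and by faithfulness $\sigma'=\sigma$. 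I expect the only real obstacle to be notational: keeping the direction of the index shift consistent between the right-hand action of $\alpha$, which advances indices by $\mathfrak{s}$, and the permutation of axes induced by the unit, and recognising that the two inclusions are the two halves of the single cancellation $\mathfrak{s}^{-1}\mathfrak{s}=\mathfrak{s}\mathfrak{s}^{-1}=\operatorname{id}$ rather than independent assertions.
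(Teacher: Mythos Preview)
Your approach is exactly the one the paper intends: the corollary is stated there without proof, merely as a consequence of Theorem~2.1 and Proposition~2.5, and you correctly extract the permutation $\mathfrak{s}$ from Proposition~2.5 and then realise it as a unit via Theorem~2.1. The existence argument for the first inclusion and the uniqueness via faithfulness are both fine.

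There is, however, a genuine slip in your treatment of the second inclusion. The composite in question is $\sigma^{-1}\alpha$, not $\sigma\alpha$; since you have arranged that $\sigma$ induces the axis-permutation $\mathfrak{s}^{-1}$, it is $\sigma^{-1}$ that induces $\mathfrak{s}$, and hence $\sigma^{-1}\alpha$ induces $\mathfrak{s}\cdot\mathfrak{s}=\mathfrak{s}^2$ on the family $\{\mathscr{K}_1,\ldots,\mathscr{K}_n\}$, not the identity. Concretely, for $n=3$ take $\alpha$ to be the unit $(x_1,x_2,x_3)\mapsto(x_2,x_3,x_1)$: the unique $\sigma$ satisfying the first inclusion is $\alpha^{-1}$, but then $\sigma^{-1}\alpha=\alpha^2$ sends $\mathscr{K}_1$ to $\mathscr{K}_2$. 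So the ``single cancellation $\mathfrak{s}^{-1}\mathfrak{s}=\mathfrak{s}\mathfrak{s}^{-1}=\operatorname{id}$'' you invoke does not describe the two stated inclusions simultaneously.

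What your computation \emph{does} establish is that the same $\sigma$ satisfies $(\mathscr{K}_i\cap\operatorname{dom}(\sigma\alpha))\sigma\alpha\subseteq\mathscr{K}_i$, i.e.\ with $\sigma$ rather than $\sigma^{-1}$ on the left; this is the form actually used later in the paper (see the invocation in Lemma~3.8(ii)), and for $n=2$ the distinction vanishes since every $\mathfrak{s}\in\mathscr{S}_2$ is an involution. In short, your plan is the right one and matches the paper's, but you should either note that the second inclusion as literally written appears to be a misprint for $\sigma\alpha$, or else carry out the direction bookkeeping honestly and observe the discrepancy.
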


\begin{lemma}\label{lemma-2.7}
There is no a finite family $\{L_1,\ldots,L_k\}$ of chains in the poset $\mathbb{N}^2_{\leqslant}$ such that $\mathbb{N}^2=L_1\cup\cdots\cup L_k$. Moreover, every co-finite subset in $\mathbb{N}^2_{\leqslant}$ has this property.
\end{lemma}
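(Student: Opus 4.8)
The plan is to exploit the fact that $\mathbb{N}^2_{\leqslant}$ contains antichains of arbitrarily large finite size, together with the elementary observation that a chain can meet any antichain in at most one point. For each positive integer $m$ I would set
\[
A_m=\big\{(i,m+1-i)\colon 1\leqslant i\leqslant m\big\}.
\]
First I would check that $A_m$ is an antichain: for $i_1<i_2$ the points $(i_1,m+1-i_1)$ and $(i_2,m+1-i_2)$ have $i_1<i_2$ but $m+1-i_1>m+1-i_2$, so they are incomparable in the product order, whence $|A_m|=m$. I would also note that the family $\{A_m\}_{m\in\mathbb{N}}$ is pairwise disjoint, since every point of $A_m$ has coordinate sum $m+1$.

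Next I would record the key fact: if $L$ is a chain and $A$ an antichain, then $|L\cap A|\leqslant 1$, since two distinct points of $L\cap A$ would be simultaneously comparable (lying in $L$) and incomparable (lying in $A$). Assuming for contradiction that $\mathbb{N}^2=L_1\cup\cdots\cup L_k$ with each $L_j$ a chain, and choosing $m=k+1$, I would obtain
\[
k+1=|A_{k+1}|=\Big|A_{k+1}\cap\bigcup_{j=1}^{k}L_j\Big|\leqslant\sum_{j=1}^{k}|A_{k+1}\cap L_j|\leqslant k,
\]
a contradiction, which settles the first assertion.

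For the \emph{moreover} part, let $X\subseteq\mathbb{N}^2$ be cofinite, put $F=\mathbb{N}^2\setminus X$ and $r=|F|<\infty$, and suppose $X=L_1\cup\cdots\cup L_k$ with each $L_j$ a chain. Here I would take $m=k+r+1$: since any subset of an antichain is again an antichain, $A_m\setminus F\subseteq X$ is an antichain of cardinality at least $m-r=k+1$, and the same counting gives $k+1\leqslant|A_m\setminus F|\leqslant\sum_{j=1}^{k}|(A_m\setminus F)\cap L_j|\leqslant k$, a contradiction.

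The argument is essentially complete, and I expect no serious obstacle; the only point needing mild care is the cofinite case, where one must choose the antichain $A_m$ large enough to survive deletion of the finitely many points of $F$ and still outnumber the $k$ chains. Everything else reduces to the one-line fact that a chain and an antichain share at most one element.
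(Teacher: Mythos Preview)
Your argument is correct and follows essentially the same idea as the paper: both use the diagonal antichain $\{(1,k+1),(2,k),\ldots,(k+1,1)\}$ (your $A_{k+1}$) together with the fact that a chain meets an antichain in at most one point (the paper phrases this as the pigeonhole principle). The only minor difference is in the cofinite case: the paper adjoins the finitely many deleted points as singleton chains and reduces directly to the first part, whereas you enlarge the antichain to $A_{k+r+1}$ so that enough of it survives inside the cofinite set; both variants are immediate once the first part is in hand.
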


\begin{proof}[\textsl{Proof}]
Suppose to the contrary that there exists a positive integer $k$ such that $\mathbb{N}^2=L_1\cup\cdots\cup L_k$ and $L_i$ is a chain for each $i=1,\ldots,k$. Then
\begin{equation*}
\left\{(1,k+1),(2,k),\ldots,(k,2),(k+1,1)\right\}
\end{equation*}
is an anti-chain in the poset $\mathbb{N}^2_{\leqslant}$ which contains exactly $k+1$ elements.
Without loss of generality we may assume that $L_i\cap L_j=\varnothing$ for $i\neq j$.
Since $\mathbb{N}^2=L_1\sqcup\cdots\sqcup L_k$,  by the pigeonhole principle (or by the Dirichlet drawer principle, see \cite[Section~7.3]{Berman-Fryer-1972}) there exists a chain $L_i$, $i=1,\ldots,k$, which contains at least two distinct elements of the set $\left\{(1,k+1),(2,k),\ldots,(k,2),(k+1,1)\right\}$, a contradiction.

Assume that $A$ is a co-finite subset of $\mathbb{N}^2_{\leqslant}$ such that $A=\mathbb{N}^2\setminus\left\{x_1,\ldots,x_p\right\}$ for some positive integer $p$. For every $i=1,\ldots,p$ we put $L_{k+i}= \{x_i\}$. Then for every finite partition $\{L_1,\ldots,L_k\}$ of $A$ such that $L_i$ is a chain for each $i=1,\ldots,k$ the  family $\{L_1,\ldots,L_k,L_{k+1}\ldots,L_{k+p}\}$ is a finite partition of the poset $\mathbb{N}^2_{\leqslant}$ such that $L_i$ is a chain for each $i=1,\ldots,k+p$. This contradicts the above part of the proof, and hence the second statement of the lemma holds.
\end{proof}

For any distinct $i,j\in\{1,\ldots,n\}$ we denote
\begin{equation*}
  \mathscr{K}_{i,j}=\left\{(x_1,\ldots,x_n)\in\mathbb{N}^n\colon x_k=1 \; \hbox{~for all~} \; k\in \{1,\ldots,n\}\setminus\{i,j\}\right\}
\end{equation*}
and
\begin{equation*}
  \mathscr{K}_{i,j}^{\circ}=\mathscr{K}_{i,j}\setminus\left(\mathscr{K}_{i}\cup\mathscr{K}_{j}\right)
\end{equation*}

\begin{lemma}\label{lemma-2.8}
Let $n$ be a positive integer $\geqslant 3$. Let $\overline{x}_i$ be an arbitrary element of $\mathscr{K}_{i}\setminus\left\{1,\ldots,1\right\}$ for $i=3,\ldots, n$ and $\overline{y}_{1,2}$ be an arbitrary element of $\mathscr{K}_{1,2}^{\circ}$. Then there exists a finite family $\{L_1,\ldots,L_k\}$ of chains in the poset $\mathbb{N}^n_{\leqslant}$ such that
\begin{equation*}
L_1\cup\cdots\cup L_k= \mathbb{N}^n\setminus\left({\uparrow}\overline{y}_{1,2}\cup{\uparrow}\overline{x}_3\cup\cdots\cup{\uparrow}\overline{x}_n\right).
\end{equation*}
\end{lemma}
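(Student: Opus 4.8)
The plan is to make the right-hand side completely explicit and then produce the chains by hand. Write the given points as $\overline{x}_i=(1,\ldots,1,c_i,1,\ldots,1)$ with $c_i\geqslant 2$ in the $i$-th place for $i=3,\ldots,n$, and $\overline{y}_{1,2}=(a,b,1,\ldots,1)$ with $a,b\geqslant 2$ (the latter inequalities being exactly what $\overline{y}_{1,2}\in\mathscr{K}_{1,2}^{\circ}$ says). Then ${\uparrow}\overline{x}_i=\{(x_1,\ldots,x_n)\colon x_i\geqslant c_i\}$ and ${\uparrow}\overline{y}_{1,2}=\{(x_1,\ldots,x_n)\colon x_1\geqslant a \text{ and } x_2\geqslant b\}$, so deleting all of them leaves precisely
\[
\Omega=\big\{(x_1,\ldots,x_n)\colon x_i\leqslant c_i-1 \text{ for all } i=3,\ldots,n, \text{ and } (x_1\leqslant a-1 \text{ or } x_2\leqslant b-1)\big\}.
\]

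First I would record the two ways in which $\Omega$ is ``thin''. The coordinates $x_3,\ldots,x_n$ are confined to the finite box $B=\prod_{i=3}^{n}\{1,\ldots,c_i-1\}$. More importantly — and this is where the situation departs in spirit from Lemma~\ref{lemma-2.7} — the projection of $\Omega$ onto the first two coordinates is not all of $\mathbb{N}^2$ but only the ``$L$-shaped'' set $\{(x_1,x_2)\colon x_1\leqslant a-1 \text{ or } x_2\leqslant b-1\}$, because the quadrant ${\uparrow}(a,b)$ has been removed. This $L$-shaped set is a finite union of chains: for each fixed $j\leqslant a-1$ the column $\{(j,x_2)\colon x_2\in\mathbb{N}\}$ is a chain, for each fixed $l\leqslant b-1$ the row $\{(x_1,l)\colon x_1\in\mathbb{N}\}$ is a chain, and these $(a-1)+(b-1)$ lines cover the $L$-shaped set.

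The construction is then immediate. For each tuple $\overline{t}=(t_3,\ldots,t_n)\in B$ and each of the lines $\ell$ above, the set $\{(x_1,x_2,t_3,\ldots,t_n)\colon (x_1,x_2)\in\ell\}$ lies inside $\Omega$; since the last $n-2$ coordinates are constant along it, comparability of two of its points is decided by the first two coordinates alone, so it is a chain in $\mathbb{N}^n_{\leqslant}$. Taking the union over the finitely many $\overline{t}\in B$ and the finitely many lines $\ell$ recovers exactly $\Omega$ (overlaps between a row-chain and a column-chain are harmless, since only the union is required), yielding a family $\{L_1,\ldots,L_k\}$ with $k\leqslant\big(\prod_{i=3}^{n}(c_i-1)\big)\big((a-1)+(b-1)\big)$. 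I do not anticipate a genuine obstacle; the only point needing care is the planar claim, and the contrast with Lemma~\ref{lemma-2.7} is precisely that here an entire quadrant of $\mathbb{N}^2$ is deleted rather than a finite set, so the planar part, although infinite, is genuinely a finite union of half-lines.
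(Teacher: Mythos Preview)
Your proof is correct and follows essentially the same approach as the paper: both identify the complement set explicitly, observe that the coordinates $x_3,\ldots,x_n$ range over a finite box, and cover the ``$L$-shaped'' projection onto the first two coordinates by the $(a-1)+(b-1)$ axis-parallel lines (the paper's $L_i(k_3,\ldots,k_n)$ and $R_j(k_3,\ldots,k_n)$ are exactly your column- and row-chains). Your write-up is in fact slightly cleaner, and the explicit bound on $k$ is a nice touch the paper omits.
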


\begin{proof}[\textsl{Proof}]
Let $\overline{x}_i=(1,1,\ldots,\underbrace{x_i}_{i\hbox{\footnotesize{th}}},\ldots,1)$ for $i=3,\ldots,n$ and $\overline{y}_{1,2}=(y_1,y_2,1\ldots,1)$. Then for any element $\overline{a}=\left(a_1,\ldots,a_n\right)$ of the set $\mathbb{N}^n\setminus\left({\uparrow}\overline{y}_{1,2}\cup{\uparrow}\overline{x}_3\cup\cdots\cup{\uparrow}\overline{x}_n\right)$ the following conditions hold:
\begin{itemize}
  \item[$(i)$] $a_i<x_i$ for any $i=3,\ldots,n$;
  \item[$(ii)$] if $a_1\geqslant y_1$ then $a_2<y_2$;
  \item[$(iii)$] if $a_2\geqslant y_2$ then $a_1<y_1$.
\end{itemize}
These conditions imply that
\begin{equation*}
  \mathbb{N}^n\setminus\left({\uparrow}\overline{y}_{1,2}\cup{\uparrow}\overline{x}_3\cup\cdots\cup{\uparrow}\overline{x}_n\right)= \bigcup\left\{S(k_3,\ldots,k_n)\colon k_3<x_3,\ldots, k_n<x_n\right\},
\end{equation*}
where
\begin{equation*}
\begin{split}
  S(k_3,\ldots,k_n)& =\bigcup\left\{L_i(k_3,\ldots,k_n)\colon i=1,\ldots y_1-1\right\}\cup \\
    & \qquad \cup \bigcup\left\{R_j(k_3,\ldots,k_n)\colon j=1,\ldots y_2-1\right\},
\end{split}
\end{equation*}
with
\begin{equation*}
  L_i(k_3,\ldots,k_n)=\left\{(i,p,k_3,\ldots,k_n)\in\mathbb{N}^n\colon p\in\mathbb{N}\right\}
\end{equation*}
and
\begin{equation*}
 R_j(k_3,\ldots,k_n)=\left\{(p,j,k_3,\ldots,k_n)\in\mathbb{N}^n\colon p\in\mathbb{N}\right\}.
\end{equation*}
We observe that for arbitrary positive integers $i$,$j$, $k_3,\ldots,k_n$ the sets $L_i(k_3,\ldots,k_n)$ and $R_j(k_3,\ldots,k_n)$ are chains in the poset $\mathbb{N}^n_{\leqslant}$. Since the set $\mathbb{N}^n\setminus\left({\uparrow}\overline{y}_{1,2}\cup{\uparrow}\overline{x}_3\cup\cdots\cup{\uparrow}\overline{x}_n\right)$ is the union of finitely many sets of the form $S(k_3,\ldots,k_n)$ the above arguments imply the required  statement of the lemma.
\end{proof}

\begin{proposition}\label{proposition-2.9}
Let $\alpha$ be an element of $\mathscr{P\!O}\!_{\infty}(\mathbb{N}^n_{\leqslant})$ such that $(\mathscr{K}_i\cap\operatorname{dom}\alpha)\alpha\subseteq \mathscr{K}_i$ for all $i=1,\ldots,n$. Then $(\mathscr{K}_{i_1,i_2}\cap\operatorname{dom}\alpha)\alpha\subseteq \mathscr{K}_{i_1,i_2}$ for all distinct $i_1,i_2=1,\ldots,n$.
\end{proposition}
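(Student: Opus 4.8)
The plan is to argue by contradiction, using Lemma~\ref{lemma-2.8} to cover a suitable complement by finitely many chains and Lemma~\ref{lemma-2.7} to forbid such a covering of a cofinite part of a coordinate plane. First I would dispose of the trivial and boundary cases. If $n=2$ then $\mathscr{K}_{1,2}=\mathbb{N}^2$ and there is nothing to prove, so assume $n\geqslant 3$; after relabelling coordinates it suffices to treat $i_1=1$, $i_2=2$. Fix $\overline{a}\in\mathscr{K}_{1,2}\cap\operatorname{dom}\alpha$. If $\overline{a}=(1,\ldots,1)$ then $(\overline{a})\alpha=(1,\ldots,1)\in\mathscr{K}_{1,2}$ by Lemma~\ref{lemma-2.2}, while if $\overline{a}\in\mathscr{K}_1\cup\mathscr{K}_2$ then $(\overline{a})\alpha\in\mathscr{K}_1\cup\mathscr{K}_2\subseteq\mathscr{K}_{1,2}$ by hypothesis. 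Hence the only case to rule out is $\overline{y}\in\mathscr{K}_{1,2}^{\circ}\cap\operatorname{dom}\alpha$ with $\overline{b}:=(\overline{y})\alpha\notin\mathscr{K}_{1,2}$, so that $\overline{b}$ has some coordinate $b_{k_0}>1$ with $k_0\geqslant 3$.

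Next I would assemble the generators required by Lemma~\ref{lemma-2.8}. For each $i=3,\ldots,n$ I pick $\overline{x}_i\in(\mathscr{K}_i\cap\operatorname{dom}\alpha)\setminus\{(1,\ldots,1)\}$, which exists because $\mathscr{K}_i\cap\operatorname{dom}\alpha$ is cofinite in the infinite $\omega$-chain $\mathscr{K}_i$. Setting $U={\uparrow}\overline{y}\cup{\uparrow}\overline{x}_3\cup\cdots\cup{\uparrow}\overline{x}_n$, Lemma~\ref{lemma-2.8} gives that $\mathbb{N}^n\setminus U$ is a finite union of chains; therefore so is $(\mathbb{N}^n\setminus U)\cap\operatorname{dom}\alpha$, and since $\alpha$ is injective and monotone its image $\big((\mathbb{N}^n\setminus U)\cap\operatorname{dom}\alpha\big)\alpha$ is again a finite union of chains (a monotone injective image of a chain is a chain). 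By hypothesis $(\overline{x}_i)\alpha\in\mathscr{K}_i$, and since $(1,\ldots,1)$ is the least element of $\mathbb{N}^n_{\leqslant}$ it is the only element $\alpha$ can send to $(1,\ldots,1)$, so injectivity forces $\overline{x}_i':=(\overline{x}_i)\alpha\neq(1,\ldots,1)$. Monotonicity then yields $(U\cap\operatorname{dom}\alpha)\alpha\subseteq V:={\uparrow}\overline{b}\cup{\uparrow}\overline{x}_3'\cup\cdots\cup{\uparrow}\overline{x}_n'$.

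The crux is the geometric observation that $V$ misses the entire plane $\mathscr{K}_{1,2}$. A point of $\mathscr{K}_{1,2}$ has all coordinates $k\geqslant 3$ equal to $1$, so it can dominate neither $\overline{x}_i'$ (whose $i$-th coordinate exceeds $1$) nor $\overline{b}$ (whose $k_0$-th coordinate exceeds $1$); hence $\mathscr{K}_{1,2}\cap V=\varnothing$. Combining this with the inclusion above, every element of $\operatorname{ran}\alpha$ lying in $\mathscr{K}_{1,2}$ must be the image of a point of $(\mathbb{N}^n\setminus U)\cap\operatorname{dom}\alpha$, so $\operatorname{ran}\alpha\cap\mathscr{K}_{1,2}$ is contained in a finite union of chains of $\mathbb{N}^n_{\leqslant}$. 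Intersecting these chains with $\mathscr{K}_{1,2}$ produces finitely many chains of the poset $\mathscr{K}_{1,2}\cong\mathbb{N}^2_{\leqslant}$ whose union contains $\operatorname{ran}\alpha\cap\mathscr{K}_{1,2}$. But $\operatorname{ran}\alpha$ is cofinite, so $\operatorname{ran}\alpha\cap\mathscr{K}_{1,2}$ is a cofinite subset of $\mathscr{K}_{1,2}$, which by Lemma~\ref{lemma-2.7} cannot be covered by finitely many chains --- a contradiction. Thus $(\overline{y})\alpha\in\mathscr{K}_{1,2}$, which finishes the argument.

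I expect the main obstacle to be the bookkeeping around the generators: one must ensure both that each $\overline{x}_i$ and its image $\overline{x}_i'$ avoid $(1,\ldots,1)$ (so that Lemma~\ref{lemma-2.8} genuinely applies and so that $V$ really misses the plane), and that the leap from ``covered by finitely many chains of $\mathbb{N}^n_{\leqslant}$'' to ``covered by finitely many chains of $\mathbb{N}^2_{\leqslant}$'' is carried out cleanly, since it is only at this last step that Lemma~\ref{lemma-2.7} delivers the contradiction.
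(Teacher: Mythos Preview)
Your argument is essentially the paper's own proof: reduce to $i_1=1$, $i_2=2$, use Lemma~\ref{lemma-2.8} to write $\mathbb{N}^n\setminus U$ as a finite union of chains, observe that $(U\cap\operatorname{dom}\alpha)\alpha\subseteq V$ misses $\mathscr{K}_{1,2}$, and then derive a contradiction with Lemma~\ref{lemma-2.7} because $\operatorname{ran}\alpha\cap\mathscr{K}_{1,2}$ is cofinite in $\mathscr{K}_{1,2}\cong\mathbb{N}^2_{\leqslant}$.

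There is one small slip. The sentence ``since $(1,\ldots,1)$ is the least element of $\mathbb{N}^n_{\leqslant}$ it is the only element $\alpha$ can send to $(1,\ldots,1)$'' is not valid: if $(1,\ldots,1)\notin\operatorname{dom}\alpha$ then nothing prevents some other minimal element of $\operatorname{dom}\alpha$ from mapping to $(1,\ldots,1)$ (for instance, in $\mathbb{N}^2_{\leqslant}$ the map with $(i,1)\mapsto(i-1,1)$ for $i\geqslant 2$ and the identity elsewhere lies in $\mathscr{P\!O}\!_{\infty}(\mathbb{N}^2_{\leqslant})$ and sends $(2,1)$ to $(1,1)$). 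The fix is immediate and is exactly what the paper does: since $\alpha$ is injective, at most one point of $\mathscr{K}_i\cap\operatorname{dom}\alpha$ can map to $(1,\ldots,1)$, so simply choose each $\overline{x}_i\in\mathscr{K}_i\cap\operatorname{dom}\alpha$ with $(\overline{x}_i)\alpha\neq(1,\ldots,1)$. With that adjustment your proof is correct and coincides with the paper's.
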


\begin{proof}[\textsl{Proof}]
Suppose to the contrary that there exists $\overline{x}\in \mathscr{K}_{i_1,i_2}\cap\operatorname{dom}\alpha$ such that $(\overline{x})\alpha\notin\mathscr{K}_{i_1,i_2}$. By Theorem~\ref{theorem-2.1} without loss of generality we may assume that $i_1=1$ and $i_2=2$, i.e., $\overline{x}\in \mathscr{K}_{1,2}$ and $(\overline{x})\alpha\notin\mathscr{K}_{1,2}$. By Lemma~\ref{lemma-2.2}, $\overline{x}\neq(1,\ldots,1)$.

For every $i=3,\ldots,n$ we let $\overline{x}_i^\alpha=(1,1,\ldots,\underbrace{x_i^\alpha}_{i\hbox{\footnotesize{th}}},\ldots,1)\in\operatorname{dom}\alpha$ be the smallest element of $\mathscr{K}_{i}$ such that $(\overline{x}_i^\alpha)\alpha\neq(1,\ldots,1)$. There exists  $x_{1,2}^\alpha=(x_1^\alpha,x_2^\alpha,1,\ldots,1)\in\operatorname{dom}\alpha\cap \mathscr{K}_{1,2}^\circ$ such that  $\overline{x}\leqslant \overline{x}_{1,2}^\alpha$. Since $\alpha\in\mathscr{P\!O}\!_{\infty}(\mathbb{N}^n_{\leqslant})$,  $(\overline{x})\alpha\leqslant (\overline{x}_{1,2}^\alpha)\alpha\notin\mathscr{K}_{1,2}$.

Now, the monotonicity of $\alpha$ implies that $\left({\uparrow}\overline{x}_{1,2}^\alpha\right)\alpha\subseteq {\uparrow}\left(\overline{x}_{1,2}^\alpha\right)\alpha$ and $\left({\uparrow}\overline{x}_i^\alpha\right)\alpha\subseteq {\uparrow}\left(\overline{x}_i^\alpha\right)\alpha$ for any $i=3,\ldots,n$. By our assumption we have that
\begin{equation*}
  \mathscr{K}_{1,2}\cap\operatorname{ran}\alpha\subseteq \left(\mathbb{N}^n_{\leqslant}\setminus\left({\uparrow}\overline{x}_{1,2}^\alpha\cup{\uparrow}\overline{x}_3^\alpha\cup\cdots \cup{\uparrow}\overline{x}_n^\alpha\right)\right)\alpha.
\end{equation*}
Since the partial transformation $\alpha$ preserves chains in the poset $\mathbb{N}^n_{\leqslant}$, Lemma~\ref{lemma-2.8} implies that the set $\mathscr{K}_{1,2}\cap\operatorname{ran}\alpha$ is a union of finitely many chains, which contradicts Lemma~\ref{lemma-2.7}. The obtained contradiction implies the assertion of the proposition.
\end{proof}

\begin{theorem}\label{theorem-2.10}
Let $\alpha$ be an element of the semigroup $\mathscr{P\!O}\!_{\infty}(\mathbb{N}^3_{\leqslant})$ such that $(\mathscr{K}_i\cap\operatorname{dom}\alpha)\alpha\subseteq \mathscr{K}_i$ for all $i=1,2,3$. Then the following assertions hold:
\begin{itemize}
  \item[$(i)$] if $(x_1,x_2,x_3)\in\operatorname{dom}\alpha$ and $(x_1,x_2,x_3)\alpha=(x_1^\alpha,x_2^\alpha,x_3^\alpha)$ then $x_1^\alpha\le x_1$, $x_2^\alpha\le x_2$ and $x_3^\alpha\le x_3$ and hence $(\overline{x})\alpha\leqslant\overline{x}$ for any $\overline{x}\in\operatorname{dom}\alpha$;
  \item[$(ii)$] there exists a smallest positive integer $n_{\alpha}$ such that $(x_1,x_2,x_3)\alpha=(x_1,x_2,x_3)$ for all $(x_1,x_2,x_3)\in\operatorname{dom}\alpha\cap{\uparrow}(n_{\alpha},n_{\alpha},n_{\alpha})$.
\end{itemize}
\end{theorem}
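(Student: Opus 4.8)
The plan is to treat both assertions by first upgrading the hypothesis so that it becomes symmetric in $\alpha$ and $\alpha^{-1}$, and only then to prove the decreasing property, from which $(ii)$ falls out almost for free. As in the proof of Proposition~\ref{proposition-2.5} I would use Lemma~\ref{lemma-2.4} to replace $\alpha$ by $\varepsilon_{\{(1,1,1)\}}\alpha\varepsilon_{\{(1,1,1)\}}$ and so assume $(1,1,1)\notin\operatorname{dom}\alpha\cup\operatorname{ran}\alpha$; the point $(1,1,1)$ itself, when it lies in $\operatorname{dom}\alpha$, is fixed by Lemma~\ref{lemma-2.2} and hence already satisfies $(i)$. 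The first real step is to show that $\alpha^{-1}$ again satisfies the hypothesis, i.e. $(\mathscr{K}_i\cap\operatorname{dom}\alpha^{-1})\alpha^{-1}\subseteq\mathscr{K}_i$ for every $i$. Applying Proposition~\ref{proposition-2.5} to $\alpha^{-1}$ yields a permutation $\mathfrak{t}$ with $(\mathscr{K}_i\cap\operatorname{dom}\alpha^{-1})\alpha^{-1}\subseteq\mathscr{K}_{(i)\mathfrak{t}}$; taking the least element $\overline{l}_i$ of $\mathscr{K}_i\cap\operatorname{dom}\alpha$, the hypothesis gives $(\overline{l}_i)\alpha\in\mathscr{K}_i\cap\operatorname{dom}\alpha^{-1}$, while $((\overline{l}_i)\alpha)\alpha^{-1}=\overline{l}_i\in\mathscr{K}_i\setminus\{(1,1,1)\}$, and since the axes meet only in $(1,1,1)$ this forces $(i)\mathfrak{t}=i$. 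Thus $\mathfrak{t}=\mathrm{id}$, and Proposition~\ref{proposition-2.9} now applies to both $\alpha$ and $\alpha^{-1}$, so both maps stabilise every axis $\mathscr{K}_i$ and every plane $\mathscr{K}_{i,j}$.

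Next I would record the technical device driving everything: for each $\overline{x}\in\operatorname{dom}\alpha$ the restriction of $\alpha$ is an order isomorphism of ${\downarrow}\overline{x}\cap\operatorname{dom}\alpha$ onto ${\downarrow}(\overline{x})\alpha\cap\operatorname{ran}\alpha$. The inclusion $\subseteq$ is monotonicity of $\alpha$; for $\supseteq$, if $\overline{z}\leqslant(\overline{x})\alpha$ and $\overline{z}=(\overline{w})\alpha$, then monotonicity of $\alpha^{-1}$ gives $\overline{w}=(\overline{z})\alpha^{-1}\leqslant\overline{x}$. Restricting this isomorphism to the axes and planes inside ${\downarrow}\overline{x}$ (preserved by the previous step) reduces the two–dimensional instances to the already known behaviour of $\mathscr{P\!O}\!_{\infty}(\mathbb{N}^2_{\leqslant})$: by the corresponding result of \cite{Gutik-Pozdniakova-2016}, the restriction of $\alpha$ to each plane $\mathscr{K}_{i,j}$ is decreasing, and in particular each axial map $\mathscr{K}_i\cap\operatorname{dom}\alpha\to\mathscr{K}_i$ is decreasing. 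This already yields $(i)$ for every $\overline{x}$ lying in some coordinate plane.

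The heart of the argument, and the step I expect to be the main obstacle, is to pass from the planes to the interior points $\overline{x}=(x_1,x_2,x_3)$ with all coordinates $\geqslant 2$. Here I would exploit the order isomorphism above together with the fact that it respects coordinate directions (it carries $\mathscr{K}_i$– and $\mathscr{K}_{i,j}$–sections to $\mathscr{K}_i$– and $\mathscr{K}_{i,j}$–sections). Tracing the maximal chain obtained from $\overline{x}$ by lowering the first coordinate down to the face $\{x_1=1\}$, its image is the first–coordinate chain issuing from $(\overline{x})\alpha$, and comparing the two chain–lengths (preserved by an order isomorphism) with the decreasing behaviour already proved on the face $\mathscr{K}_{2,3}$ should yield $x_1^{\alpha}\leqslant x_1$; the remaining coordinates are symmetric. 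Should a coordinate excess $x_1^{\alpha}>x_1$ persist, monotonicity pushes the image of ${\uparrow}\overline{x}$ off the slab $\{w_1\leqslant x_1\}$, and one manufactures, via Lemma~\ref{lemma-2.8}, a cofinite subset of the plane $\mathscr{K}_{1,2}$ that is a finite union of chains, contradicting Lemma~\ref{lemma-2.7}, precisely in the spirit of Proposition~\ref{proposition-2.9}. The genuine difficulty is the bookkeeping of the finite sets $\mathbb{N}^3\setminus\operatorname{dom}\alpha$ and $\mathbb{N}^3\setminus\operatorname{ran}\alpha$: one must rule out that the apparent excess in one coordinate is merely absorbed by these defects, and it is exactly here that the \emph{exactness} of the correspondence (an isomorphism, not merely an inequality of cardinalities) has to be used.

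Finally, $(ii)$ follows cleanly once $(i)$ is established. Since $\alpha^{-1}$ also satisfies the hypothesis, part $(i)$ applies to it and gives $(\overline{y})\alpha^{-1}\leqslant\overline{y}$ for all $\overline{y}\in\operatorname{ran}\alpha$. For $\overline{x}\in\operatorname{dom}\alpha\cap\operatorname{ran}\alpha$ put $\overline{z}=(\overline{x})\alpha$; then $\overline{z}\leqslant\overline{x}$ by $(i)$ for $\alpha$, while $\overline{x}=(\overline{z})\alpha^{-1}\leqslant\overline{z}$ by $(i)$ for $\alpha^{-1}$, whence $(\overline{x})\alpha=\overline{x}$. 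Thus $\alpha$ fixes every point of the cofinite set $\operatorname{dom}\alpha\cap\operatorname{ran}\alpha$, so the set of non–fixed points of $\alpha$ is contained in the finite set $\operatorname{dom}\alpha\setminus\operatorname{ran}\alpha$. Choosing any $n$ exceeding every coordinate of every such point makes $\alpha$ the identity on $\operatorname{dom}\alpha\cap{\uparrow}(n,n,n)$, and the least such $n$ is the required $n_{\alpha}$ by the well–ordering of $\mathbb{N}$.
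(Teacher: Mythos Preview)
Your approach rests on an assumption that is not available: you treat $\alpha^{-1}$ as though it were again an element of $\mathscr{P\!O}\!_{\infty}(\mathbb{N}^3_{\leqslant})$, but this semigroup is not inverse, and the inverse of a monotone cofinite partial bijection need not be monotone. Take $\operatorname{dom}\alpha=\mathbb{N}^3\setminus\{(1,1,1)\}$ with $(2,1,1)\alpha=(1,1,1)$, $(3,1,1)\alpha=(2,1,1)$, and $\alpha$ the identity elsewhere. One checks easily that $\alpha\in\mathscr{P\!O}\!_{\infty}(\mathbb{N}^3_{\leqslant})$ and that $\alpha$ satisfies the hypothesis $(\mathscr{K}_i\cap\operatorname{dom}\alpha)\alpha\subseteq\mathscr{K}_i$; yet $(1,1,1)\leqslant(1,2,1)$ while $(1,1,1)\alpha^{-1}=(2,1,1)\nleqslant(1,2,1)=(1,2,1)\alpha^{-1}$, so $\alpha^{-1}$ is not monotone and Proposition~\ref{proposition-2.5} cannot be invoked for it. This collapses your ``order isomorphism'' device for $(i)$ (the surjectivity direction is exactly monotonicity of $\alpha^{-1}$), and it also falsifies your argument for $(ii)$: you conclude that $\alpha$ fixes every point of $\operatorname{dom}\alpha\cap\operatorname{ran}\alpha$, but in the example $(2,1,1)\in\operatorname{dom}\alpha\cap\operatorname{ran}\alpha$ while $(2,1,1)\alpha=(1,1,1)$. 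The asymmetric inequality of Corollary~\ref{corollary-2.12} is already a signal that no such symmetry between $\alpha$ and $\alpha^{-1}$ is to be expected.

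The paper proceeds without ever touching $\alpha^{-1}$. For $(i)$ it argues by induction on $x_1$: the base $x_1=1$ is Proposition~\ref{proposition-2.9}, and for the inductive step one reduces (via an auxiliary shift map $\varpi\in\mathscr{P\!O}\!_{\infty}(\mathbb{N}^3_{\leqslant})$) to a minimal violation $x_1^\alpha=x_1+1$, from which the cofinite slice $\{(x_1,p_2,p_3)\colon p_2,p_3\in\mathbb{N}\}\cap\operatorname{dom}\alpha$ is forced to be a finite union of chains, contradicting Lemma~\ref{lemma-2.7}. For $(ii)$ the paper uses a direct pigeonhole count: choosing $n_0$ exceeding both $|\mathbb{N}^3\setminus\operatorname{dom}\alpha|$ and every coordinate of every missing point, any strict drop at a point above $(n_0,n_0,n_0)$ forces $\alpha$ to inject too many points into a finite cube.
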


\begin{proof}[\textsl{Proof}]
$(i)$ We shall prove the inequality $x_1^\alpha\le x_1$ by induction. The proofs of the inequalities $x_2^\alpha\le x_2$ and $x_3^\alpha\le x_3$ are similar.

By Proposition~\ref{proposition-2.9} we have that if $x_1=1$ then $x_1^\alpha=1$, as well.

\smallskip

Next we shall show that the following statement holds:
\begin{itemize}
  \item[] \emph{if for some positive integer $p>1$ the inequality $x_1<p$ implies $x_1^\alpha\le x_1$ then the equality $x_1=p$ implies $x_1^\alpha\le x_1$, too.}
\end{itemize}

Suppose to the contrary that there exists $(x_1,x_2,x_3)\in\operatorname{dom}\alpha$ such that
\begin{equation*}
x_1=p=(x_1,x_2,x_3)\mathfrak{pr}_1, \quad  (x_1,x_2,x_3)\alpha=(x_1^\alpha,x_2^\alpha,x_3^\alpha) \quad \hbox{and} \quad x_1+1\leq x_1^\alpha.
\end{equation*}
We define a partial map $\varpi\colon \mathbb{N}^3\rightharpoonup \mathbb{N}^3$ with $\operatorname{dom}\varpi=\mathbb{N}^3\setminus\left(\{1\}\times L(x_2)\times L(x_2)\right)$ and $\operatorname{ran}\varpi=\mathbb{N}^3$ by the formula
\begin{equation*}
  (i_1,i_2,i_3)\varpi=
  \left\{
    \begin{array}{ll}
      (i_1-1,i_2,i_3), & \hbox{if~} i_2\in L(x_2) \hbox{~and~} i_3\in L(x_2);\\
      (i_1,i_2,i_3),   & \hbox{otherwise,}
    \end{array}
  \right.
\end{equation*}
where $L(x_2)=\left\{1,\ldots,x_2\right\}$ and $L(x_3)=\left\{1,\ldots,x_3\right\}$. It is obvious that $\varpi\in\mathscr{P\!O}\!_{\infty}(\mathbb{N}^3_{\leqslant})$, and hence $\gamma\varpi^k\in\mathscr{P\!O}\!_{\infty}(\mathbb{N}^3_{\leqslant})$ for any positive integer $k$ and any $\gamma\in\mathscr{P\!O}\!_{\infty}(\mathbb{N}^3_{\leqslant})$.  This observation implies that without loss of generality we may assume that $x_1^\alpha=x_1+1$. Then the assumption of the theorem implies that  there exists the smallest element $(i_{\mathrm{m}},1,1)$ of $\mathscr{K}_1$ such that $i_{\mathrm{m}}^\alpha>x_1^\alpha+1$, where $(i_{\mathrm{m}}^\alpha,1,1)=(i_{\mathrm{m}},1,1)\alpha$. Since $({\uparrow}(i_{\mathrm{m}},1,1))\alpha\subseteq {\uparrow}(i_{\mathrm{m}}^\alpha,1,1)$, $({\uparrow}(x_1,x_2,x_3))\alpha\subseteq {\uparrow}(x_1^\alpha,x_2^\alpha,x_3^\alpha)$ and the set
$\mathbb{N}^3\setminus\operatorname{ran}\alpha$ is finite, our assumption implies that the set
\begin{equation*}
\mathscr{S}_{x_1}(\alpha)=\left\{\left(x_1,p_2,p_3\right)\in\operatorname{dom}\alpha\colon p_2,p_3\in\mathbb{N}\right\}
\end{equation*}
is a union of finitely many subchains of the poset $(\mathbb{N}^3,\leq)$. This contradicts Lemma~\ref{lemma-2.7} because the set $\mathscr{S}_{x_1}(\alpha)$ with the induced partial order from $\mathbb{N}^3_{\leqslant}$ is order isomorphic to a  cofinite subset of the poset $\mathbb{N}^2_{\leqslant}$. The obtained contradiction implies the requested inequality $x_1^\alpha\le x_1$ and hence we have that statement $(i)$ holds.

The last assertion of $(i)$ follows from the definition of the poset $\mathbb{N}^3_{\leqslant}$.

\smallskip

$(ii)$ Fix an arbitrary $\alpha\in\mathscr{P\!O}\!_{\infty}(\mathbb{N}^3_{\leqslant})$ such that $(\mathscr{K}_i\cap\operatorname{dom}\alpha)\alpha\subseteq \mathscr{K}_i$ for all $i=1,2,3$. Suppose to the contrary that for any positive integer $n$ there exists
\begin{equation*}
(x_1,x_2,x_3)\in\operatorname{dom}\alpha\cap{\uparrow}(n,n,n)
\end{equation*}
such that $(x_1,x_2,x_3)\alpha\neq(x_1,x_2,x_3)$.  We put $\textsf{N}_{\operatorname{dom}\alpha}=\left|\mathbb{N}^3\setminus\operatorname{dom}\alpha\right|+1$ and
\begin{equation*}
\begin{split}
  \textsf{M}_{\operatorname{dom}\alpha}& =\max\big\{\left\{x_1\colon(x_1,x_2,x_3)\notin\operatorname{dom}\alpha\right\}, \left\{x_2\colon(x_1,x_2,x_3)\notin\operatorname{dom}\alpha\right\},  \\
    & \qquad \left\{x_3\colon(x_1,x_2,x_3)\notin\operatorname{dom}\alpha\right\}\big\}+1.
\end{split}
\end{equation*}
The definition of the semigroup $\mathscr{P\!O}\!_{\infty}(\mathbb{N}^3_{\leqslant})$ implies that the positive integers $\textsf{N}_{\operatorname{dom}\alpha}$ and $\textsf{M}_{\operatorname{dom}\alpha}$ are well defined. Put $n_0=\max\left\{\textsf{N}_{\operatorname{dom}\alpha},\textsf{M}_{\operatorname{dom}\alpha}\right\}$. Then our assumption implies that there exists $(x_1,x_2,x_3)\in\operatorname{dom}\alpha\cap{\uparrow}(n_0,n_0,n_0)$ such that
\begin{equation*}
(x_1,x_2,x_3)\alpha=(x_1^{\alpha},x_2^{\alpha},x_3^{\alpha})\neq(x_1,x_2,x_3).
\end{equation*}
By statement $(i)$  we have that $(x_1^{\alpha},x_2^{\alpha},x_3^{\alpha})<(x_1,x_2,x_3)$. We consider the case when $x_1^{\alpha}<x_1$. In the cases when $x_2^{\alpha}<x_2$ or $x_3^{\alpha}<x_3$ the proofs are similar. We assume that $x_1\leq x_2$ and $x_1\leq x_3$. By statement $(i)$ the partial bijection $\alpha$ maps the set $S=\left\{(x,y,z)\in\mathbb{N}^3\colon x,y,z\leq x_1-1\right\}$ into itself. Also, by the definition of the semigroup $\mathscr{P\!O}\!_{\infty}(\mathbb{N}^3_{\leqslant})$ the partial bijection $\alpha$ maps the set
\begin{equation*}
\left\{(x_1,1,1), \ldots,(x_1,1,x_1),(x_1,2,1), \ldots,(x_1,2,x_1),\ldots, (x_1,x_1,1), \ldots,(x_1,x_1,x_1)\right\}
\end{equation*}
into $S$, too. Then our construction implies that
\begin{equation*}
\left|S\setminus\operatorname{dom}\alpha\right|=\left|\mathbb{N}^3\setminus\operatorname{dom}\alpha\right|=\textsf{N}_{\operatorname{dom}\alpha}-1
\end{equation*}
and
\begin{equation*}
  \left|\left\{(x_1{,}1{,}1){,} \ldots{,}(x_1{,}1{,}x_1){,}(x_1{,}2{,}1){,} \ldots{,}(x_1{,}2{,}x_1){,}\ldots{,}(x_1{,}x_1{,}1){,} \ldots{,}(x_1{,}x_1{,}x_1)\right\}\right|
    \geq\textsf{N}_{\operatorname{dom}\alpha},
\end{equation*}
a contradiction. In the case when $x_2\leq x_1$ and $x_2\leq x_3$ or $x_3\leq x_1$ and $x_3\leq x_2$ we get contradictions in similar ways. This completes the proof of existence of such a positive integer $n_{\alpha}$ for any $\alpha\in\mathscr{P\!O}\!_{\infty}(\mathbb{N}^3_{\leqslant})$. The existence of such minimal positive integer $n_{\alpha}$ follows from the fact that the set of all positive integers with the usual order $\leq$ is well-ordered.
\end{proof}

Theorem~\ref{theorem-2.10}$(iii)$ and Proposition~\ref{proposition-2.5} imply the following corollary.

\begin{corollary}\label{corollary-2.11}
For an arbitrary element $\alpha$ of the semigroup $\mathscr{P\!O}\!_{\infty}(\mathbb{N}^3_{\leqslant})$ there exist elements  $\sigma_1,\sigma_2$ of the group of units $H(\mathbb{I})$ of $\mathscr{P\!O}\!_{\infty}(\mathbb{N}^3_{\leqslant})$ and  a smallest positive integer $n_{\alpha}$ such that
$$
(x_1,x_2,x_3)\sigma_1\alpha=(x_1,x_2,x_3)\alpha\sigma_2=(x_1,x_2,x_3)
$$
for each $(x_1,x_2,x_3)\in\operatorname{dom}\alpha\cap{\uparrow}(n_{\alpha},n_{\alpha},n_{\alpha})$.
\end{corollary}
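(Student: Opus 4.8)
The plan is to deduce the statement from Theorem~\ref{theorem-2.10}(ii) applied to two auxiliary maps obtained from $\alpha$ by composing with units. By Proposition~\ref{proposition-2.5} there is a unique permutation $\mathfrak{s}$ of $\{1,2,3\}$ with $(\mathscr{K}_i\cap\operatorname{dom}\alpha)\alpha\subseteq\mathscr{K}_{(i)\mathfrak{s}}$, and by Theorem~\ref{theorem-2.1} every permutation of $\{1,2,3\}$ is realised by a unique unit of $H(\mathbb{I})$ acting by permuting coordinates. First I would choose, as in Corollary~\ref{corollary-2.6}, units $\sigma_1,\sigma_2\in H(\mathbb{I})$ so that the two products $\gamma:=\sigma_1\alpha$ and $\beta:=\alpha\sigma_2$ both satisfy $(\mathscr{K}_i\cap\operatorname{dom}\gamma)\gamma\subseteq\mathscr{K}_i$ and $(\mathscr{K}_i\cap\operatorname{dom}\beta)\beta\subseteq\mathscr{K}_i$ for $i=1,2,3$; concretely one takes for both $\sigma_1$ and $\sigma_2$ the unit realising $\mathfrak{s}^{-1}$, since then the coordinate that $\alpha$ carries from $\mathscr{K}_i$ into $\mathscr{K}_{(i)\mathfrak{s}}$ is returned to $\mathscr{K}_i$. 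Both $\gamma$ and $\beta$ again lie in $\mathscr{P\!O}\!_{\infty}(\mathbb{N}^3_{\leqslant})$, because $H(\mathbb{I})$ consists of bijections of the whole poset and the semigroup is closed under multiplication, so Theorem~\ref{theorem-2.10}(ii) applies to each of them.

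Next I would apply Theorem~\ref{theorem-2.10}(ii) separately to $\gamma$ and to $\beta$, obtaining smallest positive integers $n_\gamma$ and $n_\beta$ such that $\gamma$ fixes every point of $\operatorname{dom}\gamma\cap{\uparrow}(n_\gamma,n_\gamma,n_\gamma)$ and $\beta$ fixes every point of $\operatorname{dom}\beta\cap{\uparrow}(n_\beta,n_\beta,n_\beta)$. Since $\alpha$ is cofinite, $\mathbb{N}^3\setminus\operatorname{dom}\alpha$ is finite, so there is a positive integer $N$ with ${\uparrow}(N,N,N)\subseteq\operatorname{dom}\alpha$. I would then let $n_\alpha$ be the least positive integer which is $\geqslant\max\{n_\gamma,n_\beta,N\}$ and witnesses both required equalities, and verify the two identities on ${\uparrow}(n_\alpha,n_\alpha,n_\alpha)$; the existence of a smallest such integer is immediate from the well-ordering of $\mathbb{N}$, exactly as in Theorem~\ref{theorem-2.10}.

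The point that needs care is the bookkeeping of domains, which differs for a right and a left factor. As $\sigma_2$ is a total bijection, $\operatorname{dom}\beta=\operatorname{dom}\alpha$, so for $\overline{x}\in\operatorname{dom}\alpha\cap{\uparrow}(n_\alpha,n_\alpha,n_\alpha)$ the equality $\overline{x}\beta=\overline{x}$ reads precisely $(\overline{x})\alpha\sigma_2=\overline{x}$. For the left factor $\operatorname{dom}\gamma=(\operatorname{dom}\alpha)\sigma_1^{-1}$, so I must first confirm $\overline{x}\in\operatorname{dom}\gamma$; here the key observation is that every unit permutes coordinates, hence fixes the diagonal point $(n_\alpha,n_\alpha,n_\alpha)$ and maps ${\uparrow}(n_\alpha,n_\alpha,n_\alpha)$ onto itself. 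Choosing $n_\alpha\geqslant N$ forces ${\uparrow}(n_\alpha,n_\alpha,n_\alpha)\subseteq\operatorname{dom}\alpha$, whence $\overline{x}\sigma_1\in{\uparrow}(n_\alpha,n_\alpha,n_\alpha)\subseteq\operatorname{dom}\alpha$ and therefore $\overline{x}\in\operatorname{dom}\gamma$. On this upper set $\operatorname{dom}\alpha\cap{\uparrow}(n_\alpha,n_\alpha,n_\alpha)$ coincides with ${\uparrow}(n_\alpha,n_\alpha,n_\alpha)$, so Theorem~\ref{theorem-2.10}(ii) gives $\overline{x}\gamma=\overline{x}$, that is $(\overline{x})\sigma_1\alpha=\overline{x}$. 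Combining the two identities yields the asserted equalities, so the main obstacle is not any deep estimate but this careful matching of the three domains on a common cofinal upper set.
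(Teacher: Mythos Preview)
Your proposal is correct and follows precisely the route the paper indicates: the paper derives Corollary~\ref{corollary-2.11} directly from Theorem~\ref{theorem-2.10}(ii) together with Proposition~\ref{proposition-2.5} (equivalently Corollary~\ref{corollary-2.6}), without further argument. Your write-up is simply a careful expansion of that one-line deduction, and the extra care you take with the domain bookkeeping for the left factor $\sigma_1\alpha$ --- using that units permute coordinates and hence stabilise ${\uparrow}(n_\alpha,n_\alpha,n_\alpha)$ --- is exactly the point one must check to make the citation honest.
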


Corollary~\ref{corollary-2.11} implies

\begin{corollary}\label{corollary-2.12}
$\left|\mathbb{N}^3\setminus\operatorname{ran}\alpha\right|\leq\left|\mathbb{N}^3\setminus\operatorname{dom}\alpha\right|$ for an arbitrary $\alpha\in\mathscr{P\!O}\!_{\infty}(\mathbb{N}^3_{\leqslant})$.
\end{corollary}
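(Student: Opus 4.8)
The plan is to reduce to the coordinate-preserving case, where $\alpha$ becomes decreasing, and then to run a pigeonhole count inside a sufficiently large finite cube. First I would replace $\alpha$ by a more convenient element without changing either complement cardinality. By Corollary~\ref{corollary-2.6} (which is exactly the content packaged in Corollary~\ref{corollary-2.11}) there is a unit $\sigma\in H(\mathbb{I})$ with $(\mathscr{K}_i\cap\operatorname{dom}\alpha)\alpha\sigma\subseteq\mathscr{K}_i$ for all $i=1,2,3$. Put $\beta=\alpha\sigma$. Since $\sigma$ is a bijection of the whole set $\mathbb{N}^3$, we have $\operatorname{dom}\beta=\operatorname{dom}\alpha$ and $\operatorname{ran}\beta=(\operatorname{ran}\alpha)\sigma$, so that $|\mathbb{N}^3\setminus\operatorname{dom}\beta|=|\mathbb{N}^3\setminus\operatorname{dom}\alpha|$ and $|\mathbb{N}^3\setminus\operatorname{ran}\beta|=|\mathbb{N}^3\setminus\operatorname{ran}\alpha|$. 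Hence it suffices to prove the inequality for $\beta$, and $\beta$ now satisfies the hypothesis of Theorem~\ref{theorem-2.10}.

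The key structural input is then Theorem~\ref{theorem-2.10}$(i)$, which tells us that $\beta$ is decreasing: $(\overline{x})\beta\leqslant\overline{x}$ for every $\overline{x}\in\operatorname{dom}\beta$. In particular, if all coordinates of $\overline{x}$ are at most some $N$, then the same is true of $(\overline{x})\beta$. I would now fix a positive integer $N$ large enough that the cube $B_N=\{(x_1,x_2,x_3)\colon 1\leqslant x_k\leqslant N,\ k=1,2,3\}$ contains both of the finite sets $\mathbb{N}^3\setminus\operatorname{dom}\beta$ and $\mathbb{N}^3\setminus\operatorname{ran}\beta$; this is possible because $\beta\in\mathscr{P\!O}\!_{\infty}(\mathbb{N}^3_{\leqslant})$ has cofinite domain and image.

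Because $\beta$ is decreasing, it maps $B_N\cap\operatorname{dom}\beta$ injectively into $B_N$, and every image point lies in $\operatorname{ran}\beta$; thus $\beta$ restricts to an injection of $B_N\cap\operatorname{dom}\beta$ into $B_N\cap\operatorname{ran}\beta$, whence $|B_N\cap\operatorname{dom}\beta|\leqslant|B_N\cap\operatorname{ran}\beta|$. Since $B_N$ contains all of the domain and range holes, $|B_N\cap\operatorname{dom}\beta|=N^3-|\mathbb{N}^3\setminus\operatorname{dom}\beta|$ and $|B_N\cap\operatorname{ran}\beta|=N^3-|\mathbb{N}^3\setminus\operatorname{ran}\beta|$. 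Cancelling $N^3$ yields $|\mathbb{N}^3\setminus\operatorname{ran}\beta|\leqslant|\mathbb{N}^3\setminus\operatorname{dom}\beta|$, and transporting back through the equalities of the first paragraph gives the claim for $\alpha$.

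The only genuinely delicate point is the reduction together with the assertion that $\beta$ carries the finite cube into itself: this rests entirely on the decreasing property of Theorem~\ref{theorem-2.10}$(i)$, and it is what makes the naive pigeonhole count legitimate. I expect the main obstacle to be bookkeeping, namely verifying that composition with a unit preserves both complement cardinalities and that $B_N$ can be chosen to absorb all the finitely many holes, rather than any substantial new idea; once $\beta$ is seen to be decreasing and $B_N$ is fixed, the count is immediate.
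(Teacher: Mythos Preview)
Your argument is correct. The paper gives no explicit proof, merely stating that Corollary~\ref{corollary-2.11} implies the result; your write-up is a natural and complete unpacking of that implication, reducing via a unit to the coordinate-preserving case and then running a pigeonhole count inside a finite cube using the decreasing property of Theorem~\ref{theorem-2.10}$(i)$.

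One small remark on presentation: your parenthetical ``which is exactly the content packaged in Corollary~\ref{corollary-2.11}'' slightly overstates things, since Corollary~\ref{corollary-2.11} also carries the eventual-identity statement from Theorem~\ref{theorem-2.10}$(ii)$, not just the reduction of Corollary~\ref{corollary-2.6}. But your proof never uses that extra content, so this is only a wording issue and not a mathematical one; in fact you show that Theorem~\ref{theorem-2.10}$(i)$ alone (together with the reduction) already suffices.
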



\section{Algebraic properties of the semigroup $\mathscr{P\!O}\!_{\infty}(\mathbb{N}^3_{\leqslant})$}\label{section-3}

\begin{proposition}\label{proposition-3.1}
Let $X$ be a non-empty set and let $\mathscr{P\!B}(X)$ be a semigroup of partial bijections of $X$ with the usual composition of partial self-maps.
Then an element $\alpha$ of $\mathscr{P\!B}(X)$ is an idempotent if and only if $\alpha$ is an identity partial self-map of $X$.
\end{proposition}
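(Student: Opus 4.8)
The plan is to prove both implications directly from the composition rule for partial bijections, with essentially all of the content residing in the forward direction.

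The reverse implication is immediate. If $\alpha$ is an identity partial self-map, so that $(x)\alpha = x$ for every $x\in\operatorname{dom}\alpha$, then each $x\in\operatorname{dom}\alpha$ satisfies $(x)\alpha = x\in\operatorname{dom}\alpha$, whence $\operatorname{dom}(\alpha\alpha)=\operatorname{dom}\alpha$ and $(x)(\alpha\alpha)=((x)\alpha)\alpha=(x)\alpha$ for every such $x$; thus $\alpha\alpha=\alpha$ and $\alpha$ is idempotent.

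For the forward implication I would assume $\alpha\alpha=\alpha$ and set $D=\operatorname{dom}\alpha$, $R=\operatorname{ran}\alpha$, so that $\alpha\colon D\to R$ is a bijection. First I would extract the containment $R\subseteq D$: the composition rule gives $\operatorname{dom}(\alpha\alpha)=\{x\in D\colon (x)\alpha\in D\}$, and the equality $\alpha\alpha=\alpha$ forces $\operatorname{dom}(\alpha\alpha)=D$, so $(x)\alpha\in D$ for every $x\in D$, that is, $R=(D)\alpha\subseteq D$. Next I would extract that $\alpha$ fixes $R$ pointwise: for $x\in D$ put $y=(x)\alpha\in R$; then $(y)\alpha=((x)\alpha)\alpha=(x)(\alpha\alpha)=(x)\alpha=y$, so $(y)\alpha=y$ for all $y\in R$.

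It then remains to upgrade "$\alpha$ fixes $R$ pointwise" to "$\alpha$ is the identity on all of $D$", and this is the only step that requires care. Since $\alpha$ fixes each point of $R$ we have $(R)\alpha=R$; because $\alpha$ is injective on $D\supseteq R$, the image of the complement satisfies $(D\setminus R)\alpha=(D)\alpha\setminus(R)\alpha=R\setminus R=\varnothing$, which forces $D\setminus R=\varnothing$, i.e. $D=R$. Consequently $(x)\alpha=x$ for every $x\in D=\operatorname{dom}\alpha$, so $\alpha$ is the identity partial self-map on its domain, completing the argument. The main (and essentially only) obstacle is this final injectivity argument, namely that a partial bijection whose range is contained in its domain and which fixes its range pointwise must in fact have domain equal to range.
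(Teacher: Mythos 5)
Your proof is correct and follows essentially the same strategy as the paper's: derive $\operatorname{ran}\alpha\subseteq\operatorname{dom}\alpha$ from $\operatorname{dom}(\alpha\alpha)=\operatorname{dom}\alpha$, observe that $(y)\alpha=y$ for every $y\in\operatorname{ran}\alpha$, and then invoke injectivity to finish. The only (cosmetic) difference is in the last step: you argue at the level of sets, showing $(\operatorname{dom}\alpha\setminus\operatorname{ran}\alpha)\alpha=\varnothing$ and hence $\operatorname{dom}\alpha=\operatorname{ran}\alpha$, whereas the paper argues pointwise that the preimage of $y=(x)\alpha$ must be both $\{x\}$ and $\{y\}$, forcing $x=y$ directly; both are valid and equally short.
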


\begin{proof}[\textsl{Proof}]
The implication $(\Leftarrow)$ is trivial.

\smallskip

$(\Rightarrow)$ Let an element $\alpha$ be an idempotent of the semigroup $\mathscr{P\!B}(X)$. Then for every $x\in\operatorname{dom}\alpha$ we have that $(x)\alpha\alpha=(x)\alpha$ and hence we get that $\operatorname{dom}\alpha^2=\operatorname{dom}\alpha$ and $\operatorname{ran}\alpha^2=\operatorname{ran}\alpha$. Also since $\alpha$ is a partial bijective self-map of $X$ we conclude that the previous equalities imply that $\operatorname{dom}\alpha=\operatorname{ran}\alpha$. Fix an arbitrary $x\in\operatorname{dom}\alpha$ and suppose that $(x)\alpha=y$. Then $(x)\alpha=(x)\alpha\alpha=(y)\alpha=y$. Since $\alpha$ is a partial bijective self-map of the set $X$, we have that the equality $(y)\alpha=y$ implies that the full preimage of $y$ under the partial map $\alpha$ is equal to $y$. Similarly the equality $(x)\alpha=y$ implies that the full preimage of $y$ under the partial map $\alpha$ is equal to $x$. Thus we get that $x=y$ and our implication holds.
\end{proof}

Proposition~\ref{proposition-3.1} implies the following corollary.

\begin{corollary}\label{corollary-3.2}
An element $\alpha$ of $\mathscr{P\!O}\!_{\infty}(\mathbb{N}^n_{\leqslant})$ is an idempotent if and only if $\alpha$ is an identity partial self-map of $\mathbb{N}^n_{\leqslant}$ with the cofinite domain.
\end{corollary}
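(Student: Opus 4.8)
The plan is to deduce this directly from Proposition~\ref{proposition-3.1} applied to $X=\mathbb{N}^n$. The essential observation is that $\mathscr{P\!O}\!_{\infty}(\mathbb{N}^n_{\leqslant})$ is a subsemigroup of the semigroup $\mathscr{P\!B}(\mathbb{N}^n)$ of all partial bijections of the set $\mathbb{N}^n$, and the two semigroups carry the same operation, namely the usual composition of partial self-maps (both being subsemigroups of $\mathscr{I}_\omega$). Consequently, $\alpha$ is an idempotent in $\mathscr{P\!O}\!_{\infty}(\mathbb{N}^n_{\leqslant})$ if and only if $\alpha$ is an idempotent in $\mathscr{P\!B}(\mathbb{N}^n)$ \emph{and} $\alpha$ belongs to $\mathscr{P\!O}\!_{\infty}(\mathbb{N}^n_{\leqslant})$.

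For the forward implication I would argue as follows. Suppose $\alpha$ is an idempotent of $\mathscr{P\!O}\!_{\infty}(\mathbb{N}^n_{\leqslant})$. Then $\alpha$ is an idempotent of $\mathscr{P\!B}(\mathbb{N}^n)$, so Proposition~\ref{proposition-3.1} yields that $\alpha$ is an identity partial self-map of $\mathbb{N}^n$. Moreover, since by definition every element of $\mathscr{P\!O}\!_{\infty}(\mathbb{N}^n_{\leqslant})$ has cofinite domain, the domain of $\alpha$ is cofinite, which is exactly the asserted description.

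For the converse I would check that an identity partial self-map with cofinite domain indeed lies in $\mathscr{P\!O}\!_{\infty}(\mathbb{N}^n_{\leqslant})$ and is idempotent there. Writing $A=\mathbb{N}^n\setminus\operatorname{dom}\alpha$, which is finite by hypothesis, the map $\alpha$ coincides with $\varepsilon_A$, which by Lemma~\ref{lemma-2.4} is an element of $\mathscr{P\!O}\!_{\infty}(\mathbb{N}^n_{\leqslant})$; alternatively one verifies directly that the identity on a cofinite subset is injective, monotone, and has cofinite domain and (equal) image. Finally $\alpha\alpha=\alpha$ because $\alpha$ fixes every point of its domain, so $\alpha$ is an idempotent.

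Since the statement is a formal consequence of Proposition~\ref{proposition-3.1}, there is no genuine obstacle; the only point requiring care is to record that membership in $\mathscr{P\!O}\!_{\infty}(\mathbb{N}^n_{\leqslant})$ forces the domain to be cofinite, so that the description ``identity partial self-map'' supplied by Proposition~\ref{proposition-3.1} must be sharpened to ``identity partial self-map with cofinite domain.''
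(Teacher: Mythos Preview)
Your proof is correct and follows exactly the approach the paper intends: the paper simply records that Corollary~\ref{corollary-3.2} is implied by Proposition~\ref{proposition-3.1}, and you have spelled out the routine details (subsemigroup argument, cofiniteness of the domain, and the converse via Lemma~\ref{lemma-2.4}) that the paper leaves implicit.
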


Corollary~\ref{corollary-3.2} implies the following proposition.

\begin{proposition}\label{proposition-3.3}
Let $n$ be a positive integer $\geqslant 2$.
The subset of idempotents $E(\mathscr{P\!O}\!_{\infty}(\mathbb{N}^n_{\leqslant}))$ of the semigroup $\mathscr{P\!O}\!_{\infty}(\mathbb{N}^n_{\leqslant})$ is a commutative submonoid of $\mathscr{P\!O}\!_{\infty}(\mathbb{N}^n_{\leqslant})$ and moreover $E(\mathscr{P\!O}\!_{\infty}(\mathbb{N}^n_{\leqslant}))$ is isomorphic to the free semilattice with unit $\left(\mathscr{P}^*(\mathbb{N}^n),\cup\right)$ over the set $\mathbb{N}^n$ under the map  $(\varepsilon)\mathfrak{h}=\mathbb{N}^n\setminus\operatorname{dom}\varepsilon$.
\end{proposition}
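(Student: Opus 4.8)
The plan is to reduce everything to a single elementary composition formula, using Corollary~\ref{corollary-3.2} as the starting point. By that corollary, an element $\varepsilon\in\mathscr{P\!O}\!_{\infty}(\mathbb{N}^n_{\leqslant})$ is an idempotent precisely when it is the identity partial self-map on a cofinite domain; hence $\varepsilon$ is completely determined by the finite set $A=\mathbb{N}^n\setminus\operatorname{dom}\varepsilon$, and conversely for every finite $A\subseteq\mathbb{N}^n$ the identity map $\varepsilon_A$ on $\mathbb{N}^n\setminus A$ is monotone, injective and cofinite, so it lies in the semigroup and is idempotent. Thus the idempotents are exactly the maps $\varepsilon_A$ with $A$ finite, where the convention $A=\varnothing$ yields the identity $\mathbb{I}=\varepsilon_\varnothing$, and $\mathfrak{h}\colon\varepsilon\mapsto\mathbb{N}^n\setminus\operatorname{dom}\varepsilon$ is a well-defined bijection from $E(\mathscr{P\!O}\!_{\infty}(\mathbb{N}^n_{\leqslant}))$ onto the set $\mathscr{P}^*(\mathbb{N}^n)$ of all finite subsets of $\mathbb{N}^n$.

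The key computation is the product rule $\varepsilon_A\varepsilon_B=\varepsilon_{A\cup B}$, which I would verify directly in $\mathscr{I}_\omega$. Since $(\overline{x})\varepsilon_A=\overline{x}$ for every $\overline{x}\in\operatorname{dom}\varepsilon_A$, a point $\overline{x}$ lies in $\operatorname{dom}(\varepsilon_A\varepsilon_B)$ if and only if $\overline{x}\in\operatorname{dom}\varepsilon_A$ and $\overline{x}\in\operatorname{dom}\varepsilon_B$, that is, if and only if $\overline{x}\in\mathbb{N}^n\setminus(A\cup B)$, and on this set $\varepsilon_A\varepsilon_B$ acts as the identity. From this one identity everything follows at once: the product of two idempotents is again an idempotent of the required shape, so $E(\mathscr{P\!O}\!_{\infty}(\mathbb{N}^n_{\leqslant}))$ is closed under multiplication; the operation is commutative because $A\cup B=B\cup A$; associativity is inherited from the semigroup; and $\mathbb{I}=\varepsilon_\varnothing$ is a two-sided identity. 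Hence $E(\mathscr{P\!O}\!_{\infty}(\mathbb{N}^n_{\leqslant}))$ is a commutative submonoid.

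It then remains to check that $\mathfrak{h}$ is a unital isomorphism onto $(\mathscr{P}^*(\mathbb{N}^n),\cup)$. Injectivity is immediate, since two idempotents with the same domain coincide; surjectivity was noted above; and the product rule gives
\begin{equation*}
(\varepsilon_A\varepsilon_B)\mathfrak{h}=(\varepsilon_{A\cup B})\mathfrak{h}=A\cup B=(\varepsilon_A)\mathfrak{h}\cup(\varepsilon_B)\mathfrak{h},
\end{equation*}
together with $(\mathbb{I})\mathfrak{h}=\varnothing$, so $\mathfrak{h}$ is a homomorphism carrying the unit to the unit. Finally I would invoke the standard fact that the collection of all finite subsets of a set $X$, under union and with $\varnothing$ as identity, is exactly the free semilattice with unit over $X$, freely generated by the singletons $\{x\}$. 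I do not anticipate any genuine obstacle: the argument rests entirely on Corollary~\ref{corollary-3.2} and the composition formula for identity partial maps, and the only point demanding a little care is bookkeeping with the empty set, so that the monoid unit $\mathbb{I}$ corresponds correctly to the unit $\varnothing$ of the free semilattice.
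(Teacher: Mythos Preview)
Your proposal is correct and follows the same approach the paper has in mind: the paper simply states that Proposition~\ref{proposition-3.3} follows from Corollary~\ref{corollary-3.2} and gives no further argument, so what you have written is precisely the routine verification the authors leave to the reader. The key points---that idempotents are exactly the partial identities $\varepsilon_A$ with $A$ finite, that $\varepsilon_A\varepsilon_B=\varepsilon_{A\cup B}$, and that $\mathfrak{h}$ is therefore a monoid isomorphism onto $(\mathscr{P}^*(\mathbb{N}^n),\cup)$---are all handled correctly.
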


Later we shall need the following technical lemma.

\begin{lemma}\label{lemma-3.4}
Let $X$ be a non-empty set, $\mathscr{P\!B}(X)$ be the semigroup of partial bejections of $X$ with the usual composition of partial self-maps and $\alpha\in\mathscr{P\!B}(X)$. Then the following assertions hold:
\begin{itemize}
  \item[$(i)$] $\alpha=\gamma\alpha$ for some $\gamma\in\mathscr{P\!B}(X)$ if and only if the restriction $\gamma|_{\operatorname{dom}\alpha}\colon\operatorname{dom}\alpha\rightarrow X$ is an iden\-ti\-ty partial map;
  \item[$(ii)$] $\alpha=\alpha\gamma$ for some $\gamma\in\mathscr{P\!B}(X)$ if and only if the restriction $\gamma|_{\operatorname{ran}\alpha}\colon\operatorname{ran}\alpha\rightarrow X$ is an identity partial map.
\end{itemize}
\end{lemma}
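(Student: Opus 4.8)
The plan is to treat each of the two equalities of partial maps as the conjunction of an equality of domains and a pointwise agreement on that common domain, and to read everything off the composition rule $y(\gamma\alpha)=(y\gamma)\alpha$, which is defined exactly when $y\in\operatorname{dom}\gamma$ and $y\gamma\in\operatorname{dom}\alpha$. The injectivity of the members of $\mathscr{P\!B}(X)$ will be the engine that turns equalities of images into equalities of arguments. Thus for $(i)$ I work with $\operatorname{dom}(\gamma\alpha)=\{y\in\operatorname{dom}\gamma\colon y\gamma\in\operatorname{dom}\alpha\}$, and for $(ii)$ with $\operatorname{dom}(\alpha\gamma)=\{x\in\operatorname{dom}\alpha\colon x\alpha\in\operatorname{dom}\gamma\}$.

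For $(i)$, first assume $\alpha=\gamma\alpha$. Then $\operatorname{dom}\alpha=\operatorname{dom}(\gamma\alpha)$, so every $x\in\operatorname{dom}\alpha$ lies in $\operatorname{dom}\gamma$ with $x\gamma\in\operatorname{dom}\alpha$, and the agreement $(x\gamma)\alpha=x(\gamma\alpha)=x\alpha$ combined with the injectivity of $\alpha$ (both $x\gamma$ and $x$ lie in $\operatorname{dom}\alpha$) forces $x\gamma=x$; hence $\gamma|_{\operatorname{dom}\alpha}$ is the identity. Conversely, assume $x\gamma=x$ for all $x\in\operatorname{dom}\alpha$. Then $\operatorname{dom}\alpha\subseteq\operatorname{dom}(\gamma\alpha)$ and the pointwise agreement are immediate, since $x\gamma=x\in\operatorname{dom}\alpha$ gives $x(\gamma\alpha)=x\alpha$. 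The one genuinely non-formal point, and the step I expect to be the main obstacle, is the reverse inclusion $\operatorname{dom}(\gamma\alpha)\subseteq\operatorname{dom}\alpha$: if $y\in\operatorname{dom}(\gamma\alpha)$ then $y\gamma\in\operatorname{dom}\alpha\subseteq\operatorname{dom}\gamma$, so $\gamma$ fixes $y\gamma$ and we get $(y\gamma)\gamma=y\gamma=(y)\gamma$; the injectivity of $\gamma$ then yields $y=y\gamma\in\operatorname{dom}\alpha$. This is precisely the place where the hypothesis $\gamma\in\mathscr{P\!B}(X)$, rather than merely $\gamma$ being a partial map, is used.

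Statement $(ii)$ I would present as the easy dual of $(i)$, since the composition $\alpha\gamma$ automatically satisfies $\operatorname{dom}(\alpha\gamma)\subseteq\operatorname{dom}\alpha$ and so no reverse-inclusion difficulty arises. If $\alpha=\alpha\gamma$, then $\operatorname{dom}(\alpha\gamma)=\operatorname{dom}\alpha$ forces $\operatorname{ran}\alpha\subseteq\operatorname{dom}\gamma$, and for each $x\in\operatorname{dom}\alpha$ the agreement $(x\alpha)\gamma=x\alpha$ says exactly that $\gamma$ fixes the point $x\alpha\in\operatorname{ran}\alpha$, so $\gamma|_{\operatorname{ran}\alpha}$ is the identity. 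Conversely, if $\gamma$ fixes $\operatorname{ran}\alpha$ pointwise then $x\alpha\in\operatorname{ran}\alpha\subseteq\operatorname{dom}\gamma$ gives $\operatorname{dom}(\alpha\gamma)=\operatorname{dom}\alpha$, while $(x\alpha)\gamma=x\alpha$ supplies the agreement, whence $\alpha=\alpha\gamma$. No appeal to injectivity is needed here, which confirms the asymmetry: injectivity of $\gamma$ is essential only for the converse half of $(i)$.
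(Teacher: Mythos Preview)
Your proof is correct and follows essentially the same route as the paper: for the forward direction of $(i)$ you use injectivity of $\alpha$ to deduce $x\gamma=x$ from $(x\gamma)\alpha=(x)\alpha$, exactly as the paper does, and you treat $(ii)$ as the dual. The one substantive difference is that the paper dismisses the converse of $(i)$ as ``trivial'', whereas you correctly isolate the non-formal step---showing $\operatorname{dom}(\gamma\alpha)\subseteq\operatorname{dom}\alpha$ via injectivity of $\gamma$---which is indeed needed (without $\gamma$ injective one could have $y\notin\operatorname{dom}\alpha$ with $y\gamma\in\operatorname{dom}\alpha$, enlarging the domain of $\gamma\alpha$); so your write-up is in fact more complete than the paper's on this point.
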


\begin{proof}[\textsl{Proof}]
$(i)$ The implication $(\Leftarrow)$ is trivial.

$(\Rightarrow)$ Suppose that $\alpha=\gamma\alpha$ for some $\gamma\in\mathscr{P\!B}(X)$. Then $\operatorname{dom}\alpha\subseteq \operatorname{dom}\gamma$ and $\operatorname{dom}\alpha\subseteq \operatorname{ran}\gamma$. Since $\gamma\colon X\rightharpoonup X$ is a partial bijection, the above arguments imply that $(x)\gamma=x$ for each $x\in\operatorname{dom}\alpha$. Indeed,  if $(x)\gamma=y\neq x$ for some $y\in\operatorname{dom}\alpha$ then since $\alpha\colon X\rightharpoonup X$ is a partial bijection we have that either
\begin{equation*}
    (x)\alpha=(x)\gamma\alpha=(y)\alpha\neq(x)\alpha, \qquad \hbox{if} \quad y\in\operatorname{dom}\alpha,
\end{equation*}
or $(y)\alpha$ is undefined. This completes the proof of the implication.

The proof of $(ii)$ is similar to that of $(i)$.
\end{proof}

Lemma~\ref{lemma-3.4} implies the following corollary.

\begin{corollary}\label{corollary-3.5}
Let $n$ be a positive integer $\geqslant 2$ and $\alpha$ be an element of the semigroup $\mathscr{P\!O}\!_{\infty}(\mathbb{N}^n_{\leqslant})$. Then the following assertions hold:
\begin{itemize}
  \item[$(i)$] $\alpha=\gamma\alpha$ for some $\gamma\in\mathscr{P\!O}\!_{\infty}(\mathbb{N}^n_{\leqslant})$ if and only if the restriction $\gamma|_{\operatorname{dom}\alpha}\colon\operatorname{dom}\alpha\rightarrow \mathbb{N}^n$ is an iden\-ti\-ty partial map;
  \item[$(ii)$] $\alpha=\alpha\gamma$ for some $\gamma\in\mathscr{P\!O}\!_{\infty}(\mathbb{N}^n_{\leqslant})$ if and only if the restriction $\gamma|_{\operatorname{ran}\alpha}\colon\operatorname{ran}\alpha\rightarrow \mathbb{N}^n$ is an identity partial map.
\end{itemize}
\end{corollary}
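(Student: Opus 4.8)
The plan is to obtain the corollary as a direct specialization of Lemma~\ref{lemma-3.4} to the set $X=\mathbb{N}^n$. The only structural fact I need is the one recorded in the introduction: $\mathscr{P\!O}\!_{\infty}(\mathbb{N}^n_{\leqslant})$ is a submonoid of $\mathscr{I}_\omega=\mathscr{P\!B}(\mathbb{N}^n)$, and the two monoids carry the same operation, namely composition of partial self-maps. Consequently every product computed in $\mathscr{P\!O}\!_{\infty}(\mathbb{N}^n_{\leqslant})$ coincides with the corresponding product in $\mathscr{P\!B}(\mathbb{N}^n)$, and every $\gamma\in\mathscr{P\!O}\!_{\infty}(\mathbb{N}^n_{\leqslant})$ is in particular an element of $\mathscr{P\!B}(\mathbb{N}^n)$. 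Throughout I treat $\gamma$ as a fixed (but arbitrary) element of $\mathscr{P\!O}\!_{\infty}(\mathbb{N}^n_{\leqslant})$, exactly as in the statement and proof of Lemma~\ref{lemma-3.4}.

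First I would dispose of assertion $(i)$. For the direction $(\Rightarrow)$, assume $\alpha=\gamma\alpha$ with $\gamma\in\mathscr{P\!O}\!_{\infty}(\mathbb{N}^n_{\leqslant})$; since $\gamma\in\mathscr{P\!B}(\mathbb{N}^n)$ and the identity $\alpha=\gamma\alpha$ holds verbatim in $\mathscr{P\!B}(\mathbb{N}^n)$, Lemma~\ref{lemma-3.4}$(i)$ applies without change and yields that $\gamma|_{\operatorname{dom}\alpha}$ is an identity partial map. The converse $(\Leftarrow)$ is the trivial direction already contained in Lemma~\ref{lemma-3.4}$(i)$: if $\gamma|_{\operatorname{dom}\alpha}$ is the identity on $\operatorname{dom}\alpha$, then $(\overline{x})\gamma\alpha=(\overline{x})\alpha$ for every $\overline{x}\in\operatorname{dom}\alpha$, whence $\gamma\alpha=\alpha$; and this equality lives in $\mathscr{P\!O}\!_{\infty}(\mathbb{N}^n_{\leqslant})$ precisely because $\gamma\alpha=\alpha$ is already an element of $\mathscr{P\!O}\!_{\infty}(\mathbb{N}^n_{\leqslant})$. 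Assertion $(ii)$ is handled in exactly the same manner, invoking Lemma~\ref{lemma-3.4}$(ii)$ and replacing $\operatorname{dom}\alpha$ by $\operatorname{ran}\alpha$ throughout.

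I expect no genuine obstacle here: the whole content of the corollary is the passage to the ambient semigroup $\mathscr{P\!B}(\mathbb{N}^n)$, and the single point deserving a word of justification is that the hypothesis $\gamma\in\mathscr{P\!O}\!_{\infty}(\mathbb{N}^n_{\leqslant})$ is strictly stronger than $\gamma\in\mathscr{P\!B}(\mathbb{N}^n)$, so Lemma~\ref{lemma-3.4} may be quoted with no adaptation. In particular, I would emphasize that one need not separately verify monotonicity or cofiniteness of the restriction $\gamma|_{\operatorname{dom}\alpha}$ (respectively $\gamma|_{\operatorname{ran}\alpha}$): Lemma~\ref{lemma-3.4} forces it to be the identity map on $\operatorname{dom}\alpha$ (respectively $\operatorname{ran}\alpha$), and neither the monotonicity nor the cofiniteness of $\gamma$ enters the argument at any stage.
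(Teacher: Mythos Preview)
Your proposal is correct and matches the paper's approach exactly: the paper states only that ``Lemma~\ref{lemma-3.4} implies the following corollary,'' and your argument spells out precisely this implication via the inclusion $\mathscr{P\!O}\!_{\infty}(\mathbb{N}^n_{\leqslant})\subseteq\mathscr{P\!B}(\mathbb{N}^n)$. Your added remark that monotonicity and cofiniteness of $\gamma$ play no role is accurate and harmless.
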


The following theorem describes Green's relations $\mathscr{L}$, $\mathscr{R}$, $\mathscr{H}$ and $\mathscr{D}$ on the semigroup $\mathscr{P\!O}\!_{\infty}(\mathbb{N}^3_{\leqslant})$.

\begin{theorem}\label{theorem-3.6}
Let $\alpha$ and $\beta$ be elements of the semigroup $\mathscr{P\!O}\!_{\infty}(\mathbb{N}^3_{\leqslant})$. Then the following assertions hold:
\begin{itemize}
  \item[$(i)$] $\alpha\mathscr{L}\beta$ if and only if $\alpha=\mu\beta$ for some $\mu\in H(\mathbb{I})$;
  \item[$(ii)$] $\alpha\mathscr{R}\beta$ if and only if $\alpha=\beta\nu$ for some $\nu\in H(\mathbb{I})$;
  \item[$(iii)$] $\alpha\mathscr{H}\beta$ if and only if $\alpha=\mu\beta=\beta\nu$ for some $\mu,\nu\in H(\mathbb{I})$;
  \item[$(iv)$] $\alpha\mathscr{D}\beta$ if and only if $\alpha=\mu\beta\nu$ for some $\mu,\nu\in H(\mathbb{I})$.
\end{itemize}
\end{theorem}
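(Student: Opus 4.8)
The plan is to reduce the whole statement to parts $(i)$ and $(ii)$, which are mutually parallel, and then to isolate a single structural lemma as the real content. The backward implications are immediate: if $\mu\in H(\mathbb{I})$ then $\mu^{-1}\in H(\mathbb{I})\subseteq S:=\mathscr{P\!O}\!_{\infty}(\mathbb{N}^3_{\leqslant})$, so $\alpha=\mu\beta$ forces $\beta=\mu^{-1}\alpha$ and hence $S^1\alpha=S^1\beta$, i.e.\ $\alpha\mathscr{L}\beta$; dually $\alpha=\beta\nu$ gives $\alpha\mathscr{R}\beta$, and the backward halves of $(iii)$, $(iv)$ follow by combining these. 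For the forward implications I would first dispose of $(iii)$ and $(iv)$ assuming $(i)$ and $(ii)$: since $\mathscr{H}=\mathscr{L}\cap\mathscr{R}$, part $(iii)$ is exactly the conjunction of $(i)$ and $(ii)$; and since $\mathscr{D}=\mathscr{R}\circ\mathscr{L}$, from $\alpha\mathscr{D}\beta$ I obtain $\gamma\in S$ with $\alpha\mathscr{R}\gamma$ and $\gamma\mathscr{L}\beta$, whence $(ii)$ yields $\alpha=\gamma\nu$ and $(i)$ yields $\gamma=\mu\beta$ for units $\mu,\nu$, so $\alpha=\mu\beta\nu$. Thus everything rests on $(i)$ and $(ii)$, which I would treat symmetrically: part $(ii)$ through \emph{ranges} using Corollary~\ref{corollary-3.5}$(ii)$, and part $(i)$ through \emph{domains} using Corollary~\ref{corollary-3.5}$(i)$.

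The heart of the argument is the following claim, which I would state and prove separately: \emph{if $\phi\in S$ is such that its set-theoretic inverse $\phi^{-1}$ (taken in $\mathscr{I}_\omega$) again lies in $S$, then $\phi$ is the restriction to its cofinite domain of a coordinate permutation}, i.e.\ there is $\pi\in H(\mathbb{I})$ with $\phi=\pi|_{\operatorname{dom}\phi}$. To prove it, apply Proposition~\ref{proposition-2.5} to $\phi$ to obtain its associated permutation $\mathfrak{s}$; applying Proposition~\ref{proposition-2.5} also to $\phi^{-1}$ and composing on the cofinite set where $\phi\phi^{-1}$ and $\phi^{-1}\phi$ are identities shows that the permutation attached to $\phi^{-1}$ is $\mathfrak{s}^{-1}$. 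Let $\pi\in H(\mathbb{I})$ realize $\mathfrak{s}^{-1}$ (Theorem~\ref{theorem-2.1}). Then both $\psi:=\phi\pi$ and $\psi^{-1}=\pi^{-1}\phi^{-1}$ map $\mathscr{K}_i\cap\operatorname{dom}(\cdot)$ into $\mathscr{K}_i$, i.e.\ both are ``reduced,'' so Theorem~\ref{theorem-2.10}$(i)$ applies to each and gives $\overline{x}\psi\leqslant\overline{x}$ and $\overline{y}\psi^{-1}\leqslant\overline{y}$ throughout their domains. Taking $\overline{y}=\overline{x}\psi$ forces $\overline{x}\leqslant\overline{x}\psi\leqslant\overline{x}$, so $\psi$ is the identity on $\operatorname{dom}\phi$ and $\phi=\pi^{-1}|_{\operatorname{dom}\phi}$, with $\pi^{-1}\in H(\mathbb{I})$.

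With this lemma in hand, part $(ii)$ runs as follows. Suppose $\alpha\mathscr{R}\beta$; discarding the trivial case coming from the adjoined identity of $S^1$, write $\alpha=\beta\nu$ and $\beta=\alpha\nu'$ with $\nu,\nu'\in S$. From $\beta=\beta\nu\nu'$, $\alpha=\alpha\nu'\nu$ and Corollary~\ref{corollary-3.5}$(ii)$ the maps $\nu\nu'$ and $\nu'\nu$ are identities on $\operatorname{ran}\beta$ and $\operatorname{ran}\alpha$ respectively; hence $\phi:=\nu|_{\operatorname{ran}\beta}$ is a monotone cofinite bijection of $\operatorname{ran}\beta$ onto $\operatorname{ran}\alpha$ whose inverse is $\nu'|_{\operatorname{ran}\alpha}\in S$. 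By the lemma $\phi=\pi|_{\operatorname{ran}\beta}$ for some $\pi\in H(\mathbb{I})$. Since $\phi$ is defined on all of $\operatorname{ran}\beta$ we get $\operatorname{dom}(\beta\nu)=\operatorname{dom}\beta$, so $\operatorname{dom}\alpha=\operatorname{dom}\beta=\operatorname{dom}(\beta\pi)$, and $(\overline{z}\beta)\pi=(\overline{z}\beta)\nu=\overline{z}\alpha$ for every $\overline{z}\in\operatorname{dom}\beta$; therefore $\alpha=\beta\pi$ with $\pi\in H(\mathbb{I})$. Part $(i)$ is verbatim the same with ``range'' replaced by ``domain'' and Corollary~\ref{corollary-3.5}$(ii)$ by $(i)$, producing a unit $\mu$ with $\alpha=\mu\beta$.

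The crux — and the only place where the hypothesis $n=3$ is genuinely used — is the lemma, specifically the simultaneous application of Theorem~\ref{theorem-2.10}$(i)$ to both $\psi$ and $\psi^{-1}$. It is exactly the ``coordinates never increase'' property of reduced elements, available for $n=3$, that upgrades an order-isomorphism between cofinite subposets of $\mathbb{N}^3_{\leqslant}$ from being \emph{eventually} a coordinate permutation to being \emph{exactly} a restricted coordinate permutation on its whole domain; this exactness is what is needed for the honest equality $\alpha=\beta\pi$ rather than an equality off a finite set. I expect the step needing most care to be the verification that $\psi$ and $\psi^{-1}$ are simultaneously reduced, equivalently that the permutations assigned to $\phi$ and $\phi^{-1}$ by Proposition~\ref{proposition-2.5} are mutually inverse.
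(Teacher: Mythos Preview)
Your argument is correct. Both your proof and the paper's rest on the same engine---Theorem~\ref{theorem-2.10}$(i)$, that a ``reduced'' element of $S$ is coordinatewise non-increasing---but the packaging differs. The paper proves $(i)$ by a four-case split on whether each of $\alpha,\beta$ already preserves the axes $\mathscr{K}_i$; in the base case it shows the witnesses $\gamma,\delta$ with $\alpha=\gamma\beta$, $\beta=\delta\alpha$ are themselves reduced, and then Corollary~\ref{corollary-3.5} together with Theorem~\ref{theorem-2.10}$(i)$ forces their restrictions to be identities, while the remaining three cases reduce to this one by pre-multiplying by suitable units. Your route instead abstracts a single lemma---an element of $S$ whose set-theoretic inverse again lies in $S$ is a restricted coordinate permutation---and applies it once to $\phi=\nu|_{\operatorname{ran}\beta}$. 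This removes the case analysis, and in fact your lemma is essentially the paper's later Lemma~\ref{lemma-3.9} (stated after Theorem~\ref{theorem-3.6} for the proof that $\mathscr{D}=\mathscr{J}$); you have recognized that it already suffices here. The only spot that could use one more sentence is the claim that the permutation attached to $\phi^{-1}$ by Proposition~\ref{proposition-2.5} is $\mathfrak{s}^{-1}$: this follows from the uniqueness clause once one notes $\mathscr{K}_i\cap\operatorname{dom}\phi\subseteq(\mathscr{K}_{(i)\mathfrak{s}}\cap\operatorname{ran}\phi)\phi^{-1}$, but a careful reader will want it spelled out.
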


\begin{proof}[\textsl{Proof}]
$(i)$ The implication $(\Leftarrow)$ is trivial.

$(\Rightarrow)$ Suppose that $\alpha\mathscr{L}\beta$ in the semigroup $\mathscr{P\!O}\!_{\infty}(\mathbb{N}^3_{\leqslant})$. Then there exist $\gamma,\delta\in\mathscr{P\!O}\!_{\infty}(\mathbb{N}^3_{\leqslant})$ such that $\alpha=\gamma\beta$ and $\beta=\delta\alpha$. The last equalities imply that
$\operatorname{ran}\alpha=\operatorname{ran}\beta$.

Next, we consider  the following cases:
\begin{itemize}
  \item[$(i_1)$] $(\mathscr{K}_i\cap\operatorname{dom}\alpha)\alpha\subseteq \mathscr{K}_i$ and $(\mathscr{K}_j\cap\operatorname{dom}\beta)\beta\subseteq \mathscr{K}_j$ for all $i,j=1,2,3$;
  \item[$(i_2)$] $(\mathscr{K}_i\cap\operatorname{dom}\alpha)\alpha\subseteq \mathscr{K}_i$ for all $i=1,2,3$ and $(\mathscr{K}_j\cap\operatorname{dom}\beta)\beta\nsubseteq \mathscr{K}_j$ for some $j=1,2,3$;
  \item[$(i_3)$] $(\mathscr{K}_i\cap\operatorname{dom}\alpha)\alpha\nsubseteq \mathscr{K}_i$ for some $i=1,2,3$ and $(\mathscr{K}_j\cap\operatorname{dom}\beta)\beta\subseteq \mathscr{K}_j$ for all $j=1,2,3$;
  \item[$(i_4)$] $(\mathscr{K}_i\cap\operatorname{dom}\alpha)\alpha\nsubseteq \mathscr{K}_i$ and $(\mathscr{K}_j\cap\operatorname{dom}\beta)\beta\nsubseteq \mathscr{K}_j$ for some $i,j=1,2,3$.
\end{itemize}

Suppose that case $(i_1)$ holds. Then Proposition~\ref{proposition-2.5} and the equalities $\alpha=\gamma\beta$ and $\beta=\delta\alpha$ imply that
\begin{equation}\label{eq-2.1}
    (\mathscr{K}_i\cap\operatorname{dom}\gamma)\gamma\subseteq \mathscr{K}_i \qquad \hbox{ and } \qquad
    (\mathscr{K}_j\cap\operatorname{dom}\delta)\delta\subseteq \mathscr{K}_j, \quad \hbox{for all} \quad i,j=1,2,3,
\end{equation}
and moreover we have that $\alpha=\gamma\delta\alpha$ and $\beta=\delta\gamma\beta$. Hence by Lemma~\ref{lemma-3.4} we have that the restrictions $(\gamma\delta)|_{\operatorname{dom}\alpha}\colon\operatorname{dom}\alpha\rightharpoonup \mathbb{N}^3$ and $(\delta\gamma)|_{\operatorname{dom}\beta}\colon\operatorname{dom}\beta\rightharpoonup \mathbb{N}^3$ are identity partial maps. Then by condition $(\ref{eq-2.1})$ we obtain that the restrictions $\gamma|_{\operatorname{dom}\alpha}\colon\operatorname{dom}\alpha\rightharpoonup \mathbb{N}^3$ and $\delta|_{\operatorname{dom}\beta}\colon\operatorname{dom}\beta\rightharpoonup \mathbb{N}^3$ are identity partial maps, as well. Indeed, otherwise there exists $\overline{x}\in\operatorname{dom}\alpha$ such that either $(\overline{x})\gamma\nleqslant\overline{x}$ or $(\overline{x})\delta\nleqslant\overline{x}$, which contradicts Theorem~\ref{theorem-2.10}$(ii)$. Thus, the above arguments imply that in case $(i_1)$ we have the  equality $\alpha=\beta$.

\smallskip

Suppose that case $(i_2)$ holds. By Corollary~\ref{corollary-2.6} there exists an element $\mu$ of the group of units $H(\mathbb{I})$ of the semigroup $\mathscr{P\!O}\!_{\infty}(\mathbb{N}^3_{\leqslant})$ such that $(\mathscr{K}_j\cap\operatorname{dom}\beta)\mu\beta\subseteq \mathscr{K}_j$ for all $j=1,2,3$, and, since $\alpha\mathscr{L}\beta$,
we have that
\begin{equation*}
\alpha=\gamma\beta=\gamma\mathbb{I}\beta=\gamma(\mu^{-1}\mu)\beta=(\gamma\mu^{-1})(\mu\beta)
\end{equation*}
and $\mu\beta=(\mu\delta)\alpha$. Hence we get that $\alpha\mathscr{L}(\mu\beta)$, $(\mathscr{K}_i\cap\operatorname{dom}\alpha)\alpha\subseteq \mathscr{K}_i$ and $(\mathscr{K}_j\cap\operatorname{dom}\beta)\mu\beta\subseteq \mathscr{K}_j$ for all $i,j=1,2,3$. Then we apply case $(i_1)$ for the elements $\alpha$ and $\mu\beta$ and obtain the equality $\alpha=\mu\beta$, where $\mu$ is the above determined element of the group of units $H(\mathbb{I})$.

\smallskip

In case $(i_3)$ the proof of the equality $\alpha=\mu\beta$ is similar to case $(i_2)$.

\smallskip

Suppose that case $(i_4)$ holds. By Corollary~\ref{corollary-2.6} there exist elements $\mu_\alpha$ and $\mu_\beta$ of the group of units $H(\mathbb{I})$ of the semigroup $\mathscr{P\!O}\!_{\infty}(\mathbb{N}^3_{\leqslant})$ such that $(\mathscr{K}_j\cap\operatorname{dom}\alpha)\mu_\alpha\alpha\subseteq \mathscr{K}_j$ and $(\mathscr{K}_j\cap\operatorname{dom}\beta)\mu_\beta\beta\subseteq \mathscr{K}_j$ for all $i,j=1,2,3$, and, since $\alpha\mathscr{L}\beta$,
we have that
\begin{equation*}
\alpha=\gamma\beta=\gamma\mathbb{I}\beta=\gamma(\mu^{-1}_\beta\mu_\beta)\beta=(\gamma\mu^{-1}_\beta)(\mu_\beta\beta)
\end{equation*}
and
\begin{equation*}
\beta=\delta\alpha=\delta\mathbb{I}\alpha=\delta(\mu^{-1}_\alpha\mu_\alpha)\alpha=(\delta\mu^{-1}_\alpha)(\mu_\alpha\alpha).
\end{equation*}
Hence we get that
\begin{equation*}
\mu_\alpha\alpha=(\mu_\alpha\gamma\mu^{-1}_\beta)(\mu_\beta\beta)\qquad \hbox{and} \qquad \mu_\beta\beta=(\mu_\beta\delta\mu^{-1}_\alpha)(\mu_\alpha\alpha).
\end{equation*}
The last two equalities imply that $(\mu_\beta\beta)\mathscr{L}(\mu_\alpha\alpha)$ and by above part of the proof we have that $(\mathscr{K}_j\cap\operatorname{dom}\alpha)\mu_\alpha\alpha\subseteq \mathscr{K}_j$ and $(\mathscr{K}_j\cap\operatorname{dom}\beta)\mu_\beta\beta\subseteq \mathscr{K}_j$ for all $i,j=1,2,3$. Then we apply case $(i_1)$ for the elements $\mu_\alpha\alpha$ and $\mu_\beta\beta$ and obtain the equality $\mu_\alpha\alpha=\mu_\beta\beta$. Hence $\alpha=\mu_\alpha^{-1}\mu_\alpha\alpha=\mu_\alpha^{-1}\mu_\beta\beta$. Since $\mu_\alpha,\mu_\alpha\in H(\mathbb{I})$, $\mu=\mu_\alpha^{-1}\mu_\beta\in H(\mathbb{I})$ as well.

\smallskip

The proof of assertion $(ii)$ is dual to that of $(i)$.

\smallskip

Assertion $(iii)$ follows from $(i)$ and $(ii)$.

\smallskip

$(iv)$ Suppose that $\alpha\mathscr{D}\beta$ in $\mathscr{P\!O}\!_{\infty}(\mathbb{N}^3_{\leqslant})$. Then there exists $\gamma\in\mathscr{P\!O}\!_{\infty}(\mathbb{N}^3_{\leqslant})$ such that $\alpha\mathscr{L}\gamma$ and $\gamma\mathscr{R}\beta$.  By statements $(i)$ and  $(ii)$  there exist $\mu,\nu\in H(\mathbb{I})$ such that $\alpha=\mu\gamma$ and $\gamma=\beta\nu$ and hence $\alpha=\mu\beta\nu$. Converse, suppose that $\alpha=\mu\beta\nu$ for some $\mu,\nu\in H(\mathbb{I})$. Then by $(i)$,  $(ii)$, we have that $\alpha\mathscr{L}(\beta\nu)$ and $(\beta\nu)\mathscr{R}\beta$, and hence $\alpha\mathscr{D}\beta$ in $\mathscr{P\!O}\!_{\infty}(\mathbb{N}^3_{\leqslant})$.
\end{proof}

Theorem~\ref{theorem-3.6} implies Corollary~\ref{corollary-3.7} which gives the inner characterization of Green's relations $\mathscr{L}$, $\mathscr{R}$, and $\mathscr{H}$ 
on the semigroup $\mathscr{P\!O}\!_{\infty}(\mathbb{N}^3_{\leqslant})$ as partial permutations of the poset $\mathbb{N}^3_{\leqslant}$.

\begin{corollary}\label{corollary-3.7}
\begin{itemize}
  \item[$(i)$] Every $\mathscr{L}$-class of $\mathscr{P\!O}\!_{\infty}(\mathbb{N}^3_{\leqslant})$ contains exactly $6$ distinct elements.
  \item[$(ii)$] Every $\mathscr{R}$-class of $\mathscr{P\!O}\!_{\infty}(\mathbb{N}^3_{\leqslant})$ contains exactly  $6$ distinct elements.
  \item[$(iii)$] Every $\mathscr{H}$-class of $\mathscr{P\!O}\!_{\infty}(\mathbb{N}^3_{\leqslant})$ contains at most $6$ distinct elements.
  \end{itemize}
\end{corollary}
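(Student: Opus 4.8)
The plan is to read off all three statements from the description of Green's relations in Theorem~\ref{theorem-3.6} together with the identification $H(\mathbb{I})\cong\mathscr{S}_3$ of Theorem~\ref{theorem-2.1}, which gives $|H(\mathbb{I})|=|\mathscr{S}_3|=6$. The whole point is that each $\mathscr{L}$-class (resp.\ $\mathscr{R}$-class) is precisely a left (resp.\ right) translate of $H(\mathbb{I})$, so counting reduces to showing that the relevant translation map is injective.

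First, for assertion $(i)$ fix $\beta\in\mathscr{P\!O}\!_{\infty}(\mathbb{N}^3_{\leqslant})$. By Theorem~\ref{theorem-3.6}$(i)$ its $\mathscr{L}$-class is $L_\beta=\{\mu\beta\colon\mu\in H(\mathbb{I})\}$, and since $H(\mathbb{I})$ has exactly six elements this already shows $|L_\beta|\leqslant 6$. The substantive point is to prove that $\mu\mapsto\mu\beta$ is injective. I would suppose $\mu_1\beta=\mu_2\beta$ for $\mu_1,\mu_2\in H(\mathbb{I})$ and set $\tau=\mu_2^{-1}\mu_1\in H(\mathbb{I})$; multiplying on the left by $\mu_2^{-1}$ yields $\tau\beta=\beta$. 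By Lemma~\ref{lemma-3.4}$(i)$ the restriction $\tau|_{\operatorname{dom}\beta}$ is then the identity partial map, so $(\overline{x})\tau=\overline{x}$ for every $\overline{x}\in\operatorname{dom}\beta$.

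I would then conclude $\tau=\mathbb{I}$, which forces $\mu_1=\mu_2$ and hence $|L_\beta|=6$. This is the step I expect to be the main obstacle, and it rests on Theorem~\ref{theorem-2.1}: every element of $H(\mathbb{I})$ merely permutes the coordinates of points of $\mathbb{N}^3$, so $\tau$ is induced by some $\mathfrak{s}\in\mathscr{S}_3$. If $\mathfrak{s}$ were nontrivial, some coordinate would be moved, and the fixed-point set $\{\overline{x}\in\mathbb{N}^3\colon(\overline{x})\tau=\overline{x}\}$ would be contained in a ``diagonal'' $\{\overline{x}\in\mathbb{N}^3\colon x_i=x_j\}$ with $i\neq j$, whose complement $\{\overline{x}\colon x_i\neq x_j\}$ is infinite; thus the fixed-point set would fail to be cofinite. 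Since $\operatorname{dom}\beta$ is cofinite and contained in that fixed-point set, $\mathfrak{s}$ must be the identity and $\tau=\mathbb{I}$.

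Assertion $(ii)$ I would obtain by the dual argument: Theorem~\ref{theorem-3.6}$(ii)$ gives $R_\beta=\{\beta\nu\colon\nu\in H(\mathbb{I})\}$, whence $|R_\beta|\leqslant 6$, and the injectivity of $\nu\mapsto\beta\nu$ follows from Lemma~\ref{lemma-3.4}$(ii)$ applied to the relation $\beta\tau=\beta$, together with the same coordinate-permutation argument, now using that $\operatorname{ran}\beta$ is cofinite in place of $\operatorname{dom}\beta$. Finally, assertion $(iii)$ is immediate: since $\mathscr{H}=\mathscr{L}\cap\mathscr{R}$ we have $H_\beta=L_\beta\cap R_\beta\subseteq L_\beta$, and as $|L_\beta|=6$ this yields $|H_\beta|\leqslant 6$, which is the ``at most'' asserted in $(iii)$.
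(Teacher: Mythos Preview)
Your argument is correct and follows the same route as the paper: both derive the corollary directly from Theorem~\ref{theorem-3.6} together with $|H(\mathbb{I})|=|\mathscr{S}_3|=6$. The paper's own proof is a single sentence declaring the statements ``trivial'' consequences of Theorem~\ref{theorem-3.6}, whereas you actually supply the injectivity argument (via Lemma~\ref{lemma-3.4} and the fact that a nontrivial coordinate permutation has non-cofinite fixed-point set) that the paper leaves implicit.
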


\begin{proof}[\textsl{Proof}]
Statements $(i)$, $(ii)$ and $(iii)$  are trivial and they follow from the corresponding statements of Theorem~\ref{theorem-3.6}.
\end{proof}

\begin{lemma}\label{lemma-3.8}
Let $\alpha,\beta$ and $\gamma$ be elements of the semigroup $\mathscr{P\!O}\!_{\infty}(\mathbb{N}^3_{\leqslant})$ such that $\alpha=\beta\alpha\gamma$. Then the following statements hold:
\begin{itemize}
  \item[$(i)$] if $(\mathscr{K}_i\cap\operatorname{dom}\beta)\beta\subseteq \mathscr{K}_i$  for any $i=1,2,3$, then the restrictions $\beta|_{\operatorname{dom}\alpha}\colon \operatorname{dom}\alpha\rightharpoonup \mathbb{N}^3$ and $\gamma|_{\operatorname{ran}\alpha}\colon \operatorname{ran}\alpha\rightharpoonup \mathbb{N}^3$ are identity partial maps;

  \item[$(ii)$] if $(\mathscr{K}_i\cap\operatorname{dom}\gamma)\gamma\subseteq \mathscr{K}_i$  for any $i=1,2,3$, then the restrictions $\beta|_{\operatorname{dom}\alpha}\colon \operatorname{dom}\alpha\rightharpoonup \mathbb{N}^3$ and $\gamma|_{\operatorname{ran}\alpha}\colon \operatorname{ran}\alpha\rightharpoonup \mathbb{N}^3$ are identity partial maps;
  \item[$(iii)$] there exist elements $\sigma_\beta$ and $\sigma_\gamma$ of the group of units $H(\mathbb{I})$ of $\mathscr{P\!O}\!_{\infty}(\mathbb{N}^3_{\leqslant})$ such that $\alpha=\sigma_\beta\alpha\sigma_\gamma$.
\end{itemize}
\end{lemma}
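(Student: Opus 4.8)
The plan is to establish the three assertions in order, deriving $(ii)$ from $(i)$ by passing to inverses and $(iii)$ from $(i)$ by an iteration trick that kills the permutation parts of $\beta$ and $\gamma$.

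For $(i)$, I first note that $\alpha=\beta\alpha\gamma$ forces $\operatorname{dom}\alpha\subseteq\operatorname{dom}\beta$ and, for every $\overline{x}\in\operatorname{dom}\alpha$, that $(\overline{x})\beta\in\operatorname{dom}\alpha$, so $\beta$ carries $\operatorname{dom}\alpha$ into itself. Under the hypothesis $(\mathscr{K}_i\cap\operatorname{dom}\beta)\beta\subseteq\mathscr{K}_i$, Theorem~\ref{theorem-2.10}$(i)$ gives $(\overline{x})\beta\leqslant\overline{x}$ for all $\overline{x}\in\operatorname{dom}\beta$, so $\beta|_{\operatorname{dom}\alpha}$ is an injective, coordinatewise non-increasing selfmap of $\operatorname{dom}\alpha$. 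The crux is a box-counting argument: for a positive integer $N$ put $B_N={\downarrow}(N,N,N)$; since $\beta$ does not raise any coordinate it maps $\operatorname{dom}\alpha\cap B_N$ into itself, and because $\operatorname{dom}\alpha$ is cofinite this set is finite, so the injective map $\beta|_{\operatorname{dom}\alpha\cap B_N}$ is a bijection of the finite poset $\operatorname{dom}\alpha\cap B_N$. A bijection of a finite poset that raises no point is the identity (iterate and use that a permutation of a finite set has finite order), and letting $N\to\infty$ yields $\beta|_{\operatorname{dom}\alpha}=\mathrm{id}$. Substituting back, for $\overline{x}\in\operatorname{dom}\alpha$ we get $(\overline{x})\alpha=(\overline{x})\beta\alpha\gamma=(\overline{x})\alpha\gamma$, that is $(\overline{z})\gamma=\overline{z}$ for every $\overline{z}\in\operatorname{ran}\alpha$, which is the remaining claim.

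For $(ii)$ I would pass to inverses: from $\alpha=\beta\alpha\gamma$ we obtain $\alpha^{-1}=\gamma^{-1}\alpha^{-1}\beta^{-1}$. By Proposition~\ref{proposition-2.5} the hypothesis $(\mathscr{K}_i\cap\operatorname{dom}\gamma)\gamma\subseteq\mathscr{K}_i$ says the permutation attached to $\gamma$ is trivial, hence so is the one attached to $\gamma^{-1}$, so $\gamma^{-1}$ is admissible as the left factor in $(i)$. Applying $(i)$ to $\alpha^{-1}=\gamma^{-1}\alpha^{-1}\beta^{-1}$ gives $\gamma^{-1}|_{\operatorname{dom}\alpha^{-1}}=\mathrm{id}$ and $\beta^{-1}|_{\operatorname{ran}\alpha^{-1}}=\mathrm{id}$; since $\operatorname{dom}\alpha^{-1}=\operatorname{ran}\alpha$ and $\operatorname{ran}\alpha^{-1}=\operatorname{dom}\alpha$, reading these equalities back through $\gamma$ and $\beta$ yields exactly $\gamma|_{\operatorname{ran}\alpha}=\mathrm{id}$ and $\beta|_{\operatorname{dom}\alpha}=\mathrm{id}$.

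For $(iii)$ the idea is to strip the permutation parts of $\beta,\gamma$ by raising to a power. Let $\mathfrak{s}_\beta,\mathfrak{s}_\gamma$ be the permutations supplied by Proposition~\ref{proposition-2.5}, and let $m$ be a common multiple of their orders. Iterating $\alpha=\beta\alpha\gamma$ gives $\alpha=\beta^{m}\alpha\gamma^{m}$, where $\beta^{m},\gamma^{m}$ are now axis-preserving; applying $(i)$ and its inverse form from $(ii)$ yields $\beta^{m}|_{\operatorname{dom}\alpha}=\mathrm{id}$ and $\gamma^{m}|_{\operatorname{ran}\alpha}=\mathrm{id}$. Thus $\beta$ restricts to an order-automorphism of finite order of the cofinite poset $\operatorname{dom}\alpha$ and $\gamma$ to one of $\operatorname{ran}\alpha$. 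Now choose $\sigma_\beta,\sigma_\gamma\in H(\mathbb{I})$ realizing $\mathfrak{s}_\beta,\mathfrak{s}_\gamma$ via Theorem~\ref{theorem-2.1}. Since $\sigma_\beta^{-1}\beta$ is axis-preserving, Theorem~\ref{theorem-2.10}$(ii)$ makes it the identity on all points with sufficiently large coordinates, so $\beta$ and $\sigma_\beta$ agree on a cofinite set, and similarly $\gamma,\sigma_\gamma$. The main obstacle is upgrading this agreement from ``all large points'' to all of $\operatorname{dom}\alpha$ (respectively $\operatorname{ran}\alpha$): here I would use that $\beta|_{\operatorname{dom}\alpha}$ is an order-automorphism, so it sends ${\uparrow}\overline{x}\cap\operatorname{dom}\alpha$ onto ${\uparrow}((\overline{x})\beta)\cap\operatorname{dom}\alpha$; comparing with $\sigma_\beta({\uparrow}\overline{x})={\uparrow}((\overline{x})\sigma_\beta)$ and invoking the cofinite agreement at infinity forces ${\uparrow}((\overline{x})\beta)$ and ${\uparrow}((\overline{x})\sigma_\beta)$ to coincide cofinitely, whence $(\overline{x})\beta=(\overline{x})\sigma_\beta$ for every $\overline{x}\in\operatorname{dom}\alpha$. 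Because $\beta$ permutes $\operatorname{dom}\alpha$, the permutation $\sigma_\beta$ then stabilizes $\operatorname{dom}\alpha$ setwise (and $\sigma_\gamma$ stabilizes $\operatorname{ran}\alpha$), so $\operatorname{dom}(\sigma_\beta\alpha\sigma_\gamma)=\operatorname{dom}\alpha$; a pointwise check against $\alpha=\beta\alpha\gamma$ finally gives $\alpha=\sigma_\beta\alpha\sigma_\gamma$.
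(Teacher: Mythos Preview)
Your argument for $(i)$ via the box-counting trick is correct and in fact slightly cleaner than the paper's: the paper splits into cases according to whether $\alpha$ itself is axis-preserving and, in the axis-preserving case, uses the chain of inequalities $(\overline{x})\alpha=(\overline{x})\beta\alpha\gamma<(\overline{x})\alpha\gamma\leqslant(\overline{x})\alpha$ (so it needs to know that $\gamma$ is also axis-preserving), whereas you avoid touching $\gamma$ until after $\beta|_{\operatorname{dom}\alpha}$ is already shown to be the identity.

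The genuine problem is in your proof of $(ii)$. You pass from $\alpha=\beta\alpha\gamma$ to $\alpha^{-1}=\gamma^{-1}\alpha^{-1}\beta^{-1}$ and then apply $(i)$ with $\gamma^{-1}$ in the role of the left factor. But $\mathscr{P\!O}\!_{\infty}(\mathbb{N}^3_{\leqslant})$ is \emph{not} an inverse semigroup: the set-theoretic inverse of a monotone cofinite partial bijection need not be monotone. A concrete witness is the map $\varpi$ used in the proof of Theorem~\ref{theorem-2.10}; one checks that $\varpi^{-1}$ sends $(1,x_2,x_3)\leqslant(1,x_2{+}1,x_3)$ to the incomparable pair $(2,x_2,x_3),(1,x_2{+}1,x_3)$, so $\varpi^{-1}\notin\mathscr{P\!O}\!_{\infty}(\mathbb{N}^3_{\leqslant})$. (Corollary~\ref{corollary-2.12}, giving only $|\mathbb{N}^3\setminus\operatorname{ran}\alpha|\le|\mathbb{N}^3\setminus\operatorname{dom}\alpha|$, is another indication of this asymmetry.) Consequently none of Proposition~\ref{proposition-2.5}, Theorem~\ref{theorem-2.10}, or part $(i)$ of the present lemma can be invoked for $\alpha^{-1},\beta^{-1},\gamma^{-1}$. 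The paper instead proves $(ii)$ by a direct argument dual to $(i)$: from the hypothesis on $\gamma$ it first deduces (via Proposition~\ref{proposition-2.5} and a conjugation by a unit when $\alpha$ is not axis-preserving) that $\gamma|_{\operatorname{ran}\alpha}$ is the identity, and then reads off $\beta|_{\operatorname{dom}\alpha}=\mathrm{id}$ from Corollary~\ref{corollary-3.5}. Your box-counting idea does not dualize verbatim either, because $\alpha=\beta\alpha\gamma$ shows $\gamma^{-1}$ (not $\gamma$) carries $\operatorname{ran}\alpha$ into itself; one way to repair your approach is to note that the set-theoretic inverse $\gamma^{-1}$, while perhaps not monotone, still satisfies $\overline{y}\leqslant(\overline{y})\gamma^{-1}$ on $\operatorname{ran}\gamma$ whenever $\gamma$ is axis-preserving (by Theorem~\ref{theorem-2.10}$(i)$), and then run the finite-box argument on $\gamma^{-1}|_{\operatorname{ran}\alpha}$ directly.

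Your outline for $(iii)$ is close to the paper's (the paper simply takes $m=6=|\mathscr{S}_3|$ and then shows $(\overline{x})\beta\sigma_\beta^{-1}=\overline{x}$ by a descending-chain contradiction from $\beta^6|_{\operatorname{dom}\alpha}=\mathrm{id}$); your alternative via comparing principal filters ${\uparrow}((\overline{x})\beta)$ and ${\uparrow}((\overline{x})\sigma_\beta)$ also works, once $(ii)$ is established correctly so that both $\beta^m|_{\operatorname{dom}\alpha}$ and $\gamma^m|_{\operatorname{ran}\alpha}$ are known to be identity maps.
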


\begin{proof}[\textsl{Proof}]
$(i)$ Assume that the inclusion $(\mathscr{K}_i\cap\operatorname{dom}\beta)\beta\subseteq \mathscr{K}_i$ holds for any $i=1,2,3$. Then one of the following cases holds:
\begin{itemize}
  \item[$(1)$] $(\mathscr{K}_i\cap\operatorname{dom}\alpha)\alpha\subseteq \mathscr{K}_i$  for any $i=1,2,3$;
  \item[$(2)$] there exists $i\in\{1,2,3\}$ such that $(\mathscr{K}_i\cap\operatorname{dom}\alpha)\alpha\nsubseteq \mathscr{K}_i$.
\end{itemize}

If case $(1)$ holds then the equality $\alpha=\beta\alpha\gamma$ and Proposition~\ref{proposition-2.5} imply that $(\mathscr{K}_i\cap\operatorname{dom}\gamma)\gamma\subseteq \mathscr{K}_i$  for any $i=1,2,3$.  Suppose that $(\overline{x})\beta<\overline{x}$ for some $\overline{x}\in\operatorname{dom}\alpha$. Then by  Theorem~\ref{theorem-2.10}$(i)$ we have that
\begin{equation*}
    (\overline{x})\alpha=(\overline{x})\beta\alpha\gamma<(\overline{x})\alpha\gamma\leqslant(\overline{x})\alpha,
\end{equation*}
which contradicts the equality $\alpha=\beta\alpha\gamma$. The obtained contradiction implies that the restriction $\beta|_{\operatorname{dom}\alpha}\colon \operatorname{dom}\alpha\rightharpoonup \mathbb{N}^3$ is an identity partial map. This and the equality $\alpha=\beta\alpha\gamma$ imply that the restriction $\gamma|_{\operatorname{ran}\alpha}\colon \operatorname{ran}\alpha\rightharpoonup \mathbb{N}^3$ is an identity partial map too.

Suppose that case $(2)$ holds. Then by Corollary~\ref{corollary-2.6} there exists an element $\sigma$ of the group of units $H(\mathbb{I})$ of the semigroup $\mathscr{P\!O}\!_{\infty}(\mathbb{N}^3_{\leqslant})$ such that $(\mathscr{K}_i\cap\operatorname{dom}\alpha)\alpha\sigma\subseteq \mathscr{K}_i$  for any $i=1,2,3$. Now, the equality $\alpha=\beta\alpha\gamma$ implies that
\begin{equation*}
    \alpha\sigma=\beta\alpha\gamma\sigma=\beta\alpha\mathbb{I}\gamma\sigma=\beta\alpha(\sigma\sigma^{-1})\gamma\sigma=\beta(\alpha\sigma)(\sigma^{-1}\gamma\sigma).
\end{equation*}
By case $(1)$ we have that the restrictions $\beta|_{\operatorname{dom}\alpha}\colon \operatorname{dom}\alpha\rightharpoonup \mathbb{N}^3$ is an identity partial map, which implies that $\beta\alpha=\alpha$. Then we have that $\alpha=\beta\alpha\gamma=\alpha\gamma$ and hence by Corollary~\ref{corollary-3.5} the restriction $\gamma|_{\operatorname{ran}\alpha}\colon \operatorname{ran}\alpha\rightharpoonup \mathbb{N}^3$ is an identity partial map, which completes the proof of statement $(i)$.

\smallskip

$(ii)$ The proof of this statement is dual to $(i)$. Indeed, assume that the inclusion $(\mathscr{K}_i\cap\operatorname{dom}\gamma)\gamma\subseteq \mathscr{K}_i$ holds for any $i=1,2,3$. Then one of the following cases holds:
\begin{itemize}
  \item[$(1)$] $(\mathscr{K}_i\cap\operatorname{dom}\alpha)\alpha\subseteq \mathscr{K}_i$  for any $i=1,2,3$;
  \item[$(2)$] there exists $i\in\{1,2,3\}$ such that $(\mathscr{K}_i\cap\operatorname{dom}\alpha)\alpha\nsubseteq \mathscr{K}_i$.
\end{itemize}

If case $(1)$ holds then the equality $\alpha=\beta\alpha\gamma$ and Proposition~\ref{proposition-2.5} imply that $(\mathscr{K}_i\cap\operatorname{dom}\beta)\beta\subseteq \mathscr{K}_i$  for any $i=1,2,3$. Similarly as in the proof of statement $(i)$ Theorem~\ref{theorem-2.10}$(i)$ implies that the restrictions $\beta|_{\operatorname{dom}\alpha}\colon \operatorname{dom}\alpha\rightharpoonup \mathbb{N}^3$ and $\gamma|_{\operatorname{ran}\alpha}\colon \operatorname{ran}\alpha\rightharpoonup \mathbb{N}^3$ are identity partial maps.

\smallskip

Suppose that case $(2)$ holds. Then by Corollary~\ref{corollary-2.6} there exists an element $\sigma$ of the group of units $H(\mathbb{I})$ of the semigroup $\mathscr{P\!O}\!_{\infty}(\mathbb{N}^3_{\leqslant})$ such that $(\mathscr{K}_i\cap\operatorname{dom}\alpha)\sigma\alpha\subseteq \mathscr{K}_i$  for any $i=1,2,3$. Now, the equality $\alpha=\beta\alpha\gamma$ implies that
\begin{equation*}
    \sigma\alpha=\sigma\beta\alpha\gamma=\sigma\beta\mathbb{I}\alpha\gamma=\sigma\beta(\sigma^{-1}\sigma)\alpha\gamma=(\sigma\beta\sigma^{-1})(\sigma\alpha)\gamma.
\end{equation*}
By case $(1)$ we have that the restriction $\gamma|_{\operatorname{ran}\alpha}\colon \operatorname{ran}\alpha\rightharpoonup \mathbb{N}^3$ is an identity partial map, which implies that $\alpha=\alpha\gamma$. Then we have that $\alpha=\beta\alpha\gamma=\beta\alpha$ and hence by Corollary~\ref{corollary-3.5} the restriction $\beta|_{\operatorname{dom}\alpha}\colon \operatorname{dom}\alpha\rightharpoonup \mathbb{N}^3$ is an identity partial map as well, which completes the proof of statement $(ii)$.

\smallskip

$(iii)$ Assume that $\alpha=\beta\alpha\gamma$. By the Lagrange Theorem (see: \cite[Section~1.5]{Hall-1963}) for every element $\sigma$ of the group of permutations $\mathscr{S}_n$ the order of $\sigma$ divides the order of $\mathscr{S}_n$. This, Proposition~\ref{proposition-2.5} and the equality $\alpha=\beta\alpha\gamma$ imply that
\begin{equation}\label{eq-3.1}
  (\mathscr{K}_i\cap\operatorname{dom}\beta^6)\beta^6\subseteq \mathscr{K}_{i} \qquad \hbox{and} \qquad (\mathscr{K}_i\cap\operatorname{dom}\gamma^6)\gamma^6\subseteq \mathscr{K}_{i}, \qquad \hbox{for any} \quad i=1,2,3.
\end{equation}
Also, the equality $\alpha=\beta\alpha\gamma$ implies that
\begin{equation*}\label{eq-3.2}
  \alpha=\beta\alpha\gamma=\beta(\beta\alpha\gamma)\gamma=\beta^2\alpha\gamma^2=\ldots=\beta^6\alpha\gamma^6.
\end{equation*}
Then statements $(i)$, $(ii)$ and conditions \eqref{eq-3.1} imply that the restrictions $\beta^6|_{\operatorname{dom}\alpha}\colon \operatorname{dom}\alpha\rightharpoonup \mathbb{N}^3$ and $\gamma^6|_{\operatorname{ran}\alpha}\colon \operatorname{ran}\alpha\rightharpoonup \mathbb{N}^3$ are identity partial maps. By Corollary~\ref{corollary-2.6} there exist  unique elements $\sigma_{\beta},\sigma_{\gamma}\in H(\mathbb{I})$ such that $(\mathscr{K}_i\cap\operatorname{dom}\beta)\beta\sigma^{-1}_{\beta}\subseteq \mathscr{K}_i$, $(\mathscr{K}_i\cap \operatorname{dom}\beta)\sigma_{\beta}\beta\subseteq \mathscr{K}_i$, $(\mathscr{K}_i\cap\operatorname{dom}\alpha)\gamma\sigma^{-1}_{\gamma}\subseteq \mathscr{K}_i$ and $(\mathscr{K}_i\cap \operatorname{dom}\gamma)\sigma_{\gamma}\gamma\subseteq \mathscr{K}_i$ for all $i=1,2,3$. Then we have that
\begin{equation}\label{eq-3.3}
\begin{split}
  \beta^6 & =(\beta\mathbb{I}\beta)(\beta\mathbb{I}\beta)(\beta\mathbb{I}\beta) \\
    & =(\beta\sigma^{-1}_{\beta}\sigma_{\beta}\beta)(\beta\sigma^{-1}_{\beta}\sigma_{\beta}\beta)(\beta\sigma^{-1}_{\beta}\sigma_{\beta}\beta)\\
    & =(\beta\sigma^{-1}_{\beta})(\sigma_{\beta}\beta)(\beta\sigma^{-1}_{\beta})(\sigma_{\beta}\beta)(\beta\sigma^{-1}_{\beta})(\sigma_{\beta}\beta)
\end{split}
\end{equation}
and
\begin{equation}\label{eq-3.4}
\begin{split}
  \gamma^6 & =(\gamma\mathbb{I}\gamma)(\gamma\mathbb{I}\gamma)(\gamma\mathbb{I}\gamma) \\
    & =(\gamma\sigma^{-1}_{\gamma}\sigma_{\gamma}\gamma)(\gamma\sigma^{-1}_{\gamma}\sigma_{\gamma}\gamma)(\gamma\sigma^{-1}_{\gamma}\sigma_{\gamma}\gamma) \\
    & =(\gamma\sigma^{-1}_{\gamma})(\sigma_{\gamma}\gamma)(\gamma\sigma^{-1}_{\gamma})(\sigma_{\gamma}\gamma)
    (\gamma\sigma^{-1}_{\gamma})(\sigma_{\gamma}\gamma).
\end{split}
\end{equation}

We claim that $(\overline{x})(\beta\sigma^{-1}_{\beta})=\overline{x}$  for any $\overline{x}\in\operatorname{dom}\alpha$. Assume that $(\overline{x})(\beta\sigma^{-1}_{\beta})\neq\overline{x}$ for some $\overline{x}\in\operatorname{dom}\alpha$. Then the choice of the element $\sigma_{\beta}\in H(\mathbb{I})$, Theorem~\ref{theorem-2.10}$(i)$  and \eqref{eq-3.3} imply that
\begin{equation*}
\begin{split}
  (\overline{x})\beta^6 & = (\overline{x})(\beta\sigma^{-1}_{\beta})(\sigma_{\beta}\beta)(\beta\sigma^{-1}_{\beta})(\sigma_{\beta}\beta)(\beta\sigma^{-1}_{\beta})
  (\sigma_{\beta}\beta)\\
    & <(\overline{x})(\sigma_{\beta}\beta)(\beta\sigma^{-1}_{\beta})(\sigma_{\beta}\beta)(\beta\sigma^{-1}_{\beta})(\sigma_{\beta}\beta) \\
    & \leqslant (\overline{x})(\beta\sigma^{-1}_{\beta})(\sigma_{\beta}\beta)(\beta\sigma^{-1}_{\beta})(\sigma_{\beta}\beta)\\
    & < (\overline{x})(\sigma_{\beta}\beta)(\beta\sigma^{-1}_{\beta})(\sigma_{\beta}\beta)\\
    & \leqslant (\overline{x})(\beta\sigma^{-1}_{\beta})(\sigma_{\beta}\beta)\\
    & < (\overline{x})(\sigma_{\beta}\beta)\\
    & \leqslant\overline{x},
\end{split}
\end{equation*}
which  contradicts the fact that the restriction $\beta^6|_{\operatorname{dom}\alpha}\colon \operatorname{dom}\alpha\rightharpoonup \mathbb{N}^3$ is an identity partial map. Hence we have that $(\overline{x})(\beta\sigma^{-1}_{\beta})=\overline{x}$ for any $\overline{x}\in\operatorname{dom}\alpha$, which implies that the equality $(\overline{x})\beta=(\overline{x})\sigma_{\beta}$ holds for any $\overline{x}\in\operatorname{dom}\alpha$.

Using \eqref{eq-3.4} as in the above we prove the equality $(\overline{x})\gamma=(\overline{x})\sigma_{\gamma}$ holds for any $\overline{x}\in\operatorname{ran}\alpha$.

The obtained equalities and the definition of the composition of partial maps imply statement $(iii)$.
\end{proof}

\begin{lemma}\label{lemma-3.9}
Let $\alpha$ and $\beta$ be elements of the semigroup $\mathscr{P\!O}\!_{\infty}(\mathbb{N}^3_{\leqslant})$ and $A$ be a cofinite subset of $\mathbb{N}^3$. If the restriction $(\alpha\beta)|_A\colon A\rightharpoonup \mathbb{N}^3$ is an identity partial map then there exists an element $\sigma$ of the group of units $H(\mathbb{I})$ of $\mathscr{P\!O}\!_{\infty}(\mathbb{N}^3_{\leqslant})$ such that $(\overline{x})\alpha=(\overline{x})\sigma$ and $(\overline{y})\beta=(\overline{y})\sigma^{-1}$ for all $\overline{x}\in A$ and $\overline{y}\in (A)\alpha$.
\end{lemma}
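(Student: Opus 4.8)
The plan is to manufacture a single coordinate permutation $\sigma\in H(\mathbb{I})$ that simultaneously describes $\alpha$ on $A$ and $\beta$ on $(A)\alpha$. The key tools are the unique projection-permutation attached to each element by Proposition~\ref{proposition-2.5} (to fix which $\sigma$ to use) and the ``downward'' inequality $(\overline{x})\alpha\leqslant\overline{x}$ of Theorem~\ref{theorem-2.10}$(i)$ (to force equalities by a squeezing argument).

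First I would record the immediate consequences of the hypothesis: since $(\alpha\beta)|_A$ is an identity partial map and $A$ is cofinite, we have $A\subseteq\operatorname{dom}\alpha$, $(A)\alpha\subseteq\operatorname{dom}\beta$, and $(\overline{x})\alpha\beta=\overline{x}$ for every $\overline{x}\in A$. Next, attach to $\alpha$ and $\beta$ their unique projection-permutations $\mathfrak{s}_{\alpha},\mathfrak{s}_{\beta}$ of $\{1,2,3\}$ from Proposition~\ref{proposition-2.5}, so that $\alpha$ (resp.\ $\beta$) carries $\mathscr{K}_i\cap\operatorname{dom}\alpha$ into $\mathscr{K}_{(i)\mathfrak{s}_{\alpha}}$ (resp.\ $\mathscr{K}_i\cap\operatorname{dom}\beta$ into $\mathscr{K}_{(i)\mathfrak{s}_{\beta}}$). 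I claim $\mathfrak{s}_{\alpha}\mathfrak{s}_{\beta}=\mathrm{id}$. Indeed, for cofinitely many $\overline{x}\in\mathscr{K}_i\cap A$ both $\overline{x}$ and $(\overline{x})\alpha$ differ from $(1,1,1)$ (the latter because $(\overline{x})\alpha\beta=\overline{x}\neq(1,1,1)$ together with Lemma~\ref{lemma-2.2}), so $(\overline{x})\alpha\in\mathscr{K}_{(i)\mathfrak{s}_{\alpha}}$ and $\overline{x}=(\overline{x})\alpha\beta\in\mathscr{K}_{(i)\mathfrak{s}_{\alpha}\mathfrak{s}_{\beta}}$; since $\overline{x}\in\mathscr{K}_i$ and distinct $\mathscr{K}$'s meet only in $(1,1,1)$, this forces $(i)\mathfrak{s}_{\alpha}\mathfrak{s}_{\beta}=i$ for each $i=1,2,3$.

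Then I would choose $\sigma\in H(\mathbb{I})$ (via Corollary~\ref{corollary-2.6}) so that $\alpha\sigma^{-1}$ sends each $\mathscr{K}_i\cap\operatorname{dom}(\alpha\sigma^{-1})$ into $\mathscr{K}_i$; the identity $\mathfrak{s}_{\alpha}\mathfrak{s}_{\beta}=\mathrm{id}$ then guarantees that $\sigma\beta$ likewise sends each $\mathscr{K}_i\cap\operatorname{dom}(\sigma\beta)$ into $\mathscr{K}_i$. Both $\alpha\sigma^{-1}$ and $\sigma\beta$ lie in $\mathscr{P\!O}\!_{\infty}(\mathbb{N}^3_{\leqslant})$ and satisfy the hypothesis of Theorem~\ref{theorem-2.10}, whence $(\overline{z})(\alpha\sigma^{-1})\leqslant\overline{z}$ and $(\overline{w})(\sigma\beta)\leqslant\overline{w}$ on their respective domains. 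Now the \emph{squeeze}: for $\overline{x}\in A$ insert $\sigma^{-1}\sigma=\mathbb{I}$ to get $\overline{x}=(\overline{x})\alpha\beta=\big((\overline{x})\alpha\sigma^{-1}\big)(\sigma\beta)$; writing $\overline{u}=(\overline{x})\alpha\sigma^{-1}$ we obtain $\overline{u}\leqslant\overline{x}$ and $\overline{x}=(\overline{u})(\sigma\beta)\leqslant\overline{u}$, so antisymmetry of the product order forces $\overline{u}=\overline{x}$. Hence $(\overline{x})\alpha\sigma^{-1}=\overline{x}$, i.e.\ $(\overline{x})\alpha=(\overline{x})\sigma$ for all $\overline{x}\in A$. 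Finally, for $\overline{y}\in(A)\alpha$ write $\overline{y}=(\overline{x})\alpha=(\overline{x})\sigma$ with $\overline{x}\in A$; then $(\overline{y})\beta=(\overline{x})\alpha\beta=\overline{x}=(\overline{x})\sigma\sigma^{-1}=(\overline{y})\sigma^{-1}$, as required.

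I expect the main obstacle to be the bookkeeping of the projection-permutations rather than the squeeze itself: the delicate point is to verify that the single $\sigma$ chosen to straighten $\alpha$ on the $\mathscr{K}_i$'s also straightens $\beta$, which is precisely where the identity $\mathfrak{s}_{\alpha}\mathfrak{s}_{\beta}=\mathrm{id}$ enters and must be established before Theorem~\ref{theorem-2.10}$(i)$ can be applied to $\sigma\beta$. Once that is in place, the antisymmetry argument closing the gap between the two inequalities is immediate.
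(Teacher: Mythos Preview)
Your proof is correct and follows essentially the same route as the paper: reduce via a coordinate permutation $\sigma$ to the situation where both $\alpha\sigma^{-1}$ and $\sigma\beta$ fix each $\mathscr{K}_i$, then apply Theorem~\ref{theorem-2.10}$(i)$ to each factor and squeeze using antisymmetry. The only organizational difference is that the paper splits into the two cases ``$\alpha$ already fixes the $\mathscr{K}_i$'' and ``it does not'' (invoking Proposition~\ref{proposition-2.5} tersely to see that $\beta$ then also fixes them), whereas you handle both at once by first establishing $\mathfrak{s}_{\alpha}\mathfrak{s}_{\beta}=\mathrm{id}$ explicitly; this makes your justification that the same $\sigma$ straightens $\beta$ a bit more transparent, but the underlying argument is identical.
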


\begin{proof}[\textsl{Proof}]
We observe that  one of the following cases holds:
\begin{itemize}
  \item[$(1)$] $(\mathscr{K}_i\cap A)\alpha\subseteq \mathscr{K}_i$  for any $i=1,2,3$;
  \item[$(2)$] there exists $i\in\{1,2,3\}$ such that $(\mathscr{K}_i\cap A)\alpha\nsubseteq \mathscr{K}_i$.
\end{itemize}

If case $(1)$ holds then the assumption of the lemma and Proposition~\ref{proposition-2.5} imply that $(\mathscr{K}_i\cap(A)\alpha)\beta\subseteq \mathscr{K}_i$  for any $i=1,2,3$. Suppose that $(\overline{x})\alpha<\overline{x}$ for some $\overline{x}\in A$. Then by Theorem~\ref{theorem-2.10}$(i)$ we have that
\begin{equation*}
    (\overline{x})\alpha\beta<(\overline{x})\beta\leqslant \overline{x},
\end{equation*}
which contradicts the assumption of the lemma. Similarly we show that the case $(\overline{y})\beta<\overline{y}$ for some $\overline{y}\in (A)\alpha$ does not hold. The obtained contradiction implies that $(\overline{x})\alpha=\overline{x}$ and $(\overline{x})\beta=\overline{x}$ for all $\overline{x}\in A$.

Suppose that case $(2)$ holds. Then by Corollary~\ref{corollary-2.6} there exists an element $\sigma$ of the group of units $H(\mathbb{I})$ of the semigroup $\mathscr{P\!O}\!_{\infty}(\mathbb{N}^3_{\leqslant})$ such that $(\mathscr{K}_i\cap\operatorname{dom}\alpha)\alpha\sigma\subseteq \mathscr{K}_i$  for any $i=1,2,3$. Now,  the assumption of the lemma implies that
\begin{equation*}
    (\overline{x})\alpha\beta=(\overline{x})\alpha\mathbb{I}\beta=(\overline{x})\alpha\sigma\sigma^{-1}\beta=\overline{x},
\end{equation*}
and hence by the above part of the proof we get that $(\overline{x})\alpha\sigma=\overline{x}$ and $(\overline{y})\sigma^{-1}\beta=\overline{x}$ for all $\overline{y}\in (A)\alpha$. The obtained equalities and the definition of the composition of partial maps imply the statement of the lemma.
\end{proof}

\begin{lemma}\label{lemma-3.10}
Let $\alpha$, $\beta$, $\gamma$ and $\delta$ be elements of the semigroup $\mathscr{P\!O}\!_{\infty}(\mathbb{N}^3_{\leqslant})$ such that $\alpha=\gamma\beta\delta$. Then there exist $\gamma^*,\delta^*\in\mathscr{P\!O}\!_{\infty}(\mathbb{N}^3_{\leqslant})$ such that $\alpha=\gamma^*\beta\delta^*$,  $\operatorname{dom}\gamma^*=\operatorname{dom}\alpha$, $\operatorname{ran}\gamma^*=\operatorname{dom}\beta$, $\operatorname{dom}\delta^*=\operatorname{ran}\beta$ and $\operatorname{ran}\delta^*=\operatorname{ran}\alpha$.
\end{lemma}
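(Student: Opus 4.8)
The plan is to normalise the factorisation in two stages: first shrink the outer factors so that their ranges and domains sit inside the prescribed sets, and then enlarge them back by a finite, order-preserving surgery carried out far out in $\mathbb{N}^3_{\leqslant}$. First I would restrict. Put $\gamma_1=\gamma|_{\operatorname{dom}\alpha}$ and $\delta_1=\delta|_{(\operatorname{dom}\alpha)(\gamma\beta)}$. Then $\operatorname{dom}\gamma_1=\operatorname{dom}\alpha$ and $\operatorname{ran}\delta_1=\operatorname{ran}\alpha$, while $\operatorname{ran}\gamma_1=(\operatorname{dom}\alpha)\gamma=:P\subseteq\operatorname{dom}\beta$ and $\operatorname{dom}\delta_1=(P)\beta=:Q\subseteq\operatorname{ran}\beta$. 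A direct inspection of the domain of a triple product gives $\alpha=\gamma_1\beta\delta_1$, and since deleting finitely many points from a cofinite set leaves it cofinite, both $\gamma_1$ and $\delta_1$ lie in $\mathscr{P\!O}\!_{\infty}(\mathbb{N}^3_{\leqslant})$. Thus the only obstructions to the required equalities are that $P$ may be a proper (cofinite) subset of $\operatorname{dom}\beta$ and $Q$ a proper subset of $\operatorname{ran}\beta$; Corollary~\ref{corollary-2.12} applied to $\gamma_1$ shows that $P$, and hence $\operatorname{dom}\beta$, has smaller complement than $\operatorname{dom}\alpha$, which is what makes the enlargement below feasible.

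Next I would analyse the discrepancy. Since $\beta$ carries $P$ bijectively onto $Q$, it carries the finite sets $D:=\operatorname{dom}\beta\setminus P$ and $E:=\operatorname{ran}\beta\setminus Q=(D)\beta$ bijectively onto one another, so $|D|=|E|<\infty$. After multiplying $\gamma_1$ and $\delta_1$ by suitable units (Corollary~\ref{corollary-2.6} and Corollary~\ref{corollary-2.11}) I would reduce to the coordinate-preserving case, so that by Theorem~\ref{theorem-2.10}$(ii)$ each of $\gamma_1,\delta_1$ is the identity on a set of the form $\operatorname{dom}\cap{\uparrow}(n,n,n)$; this provides an infinite \emph{identity tail} in which there is room to manoeuvre. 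Using finitely many of the elementary shift maps of the type $\varpi$ from the proof of Theorem~\ref{theorem-2.10}, I would modify $\gamma_1$ inside such a tail to a monotone injection $\gamma^{*}$ with $\operatorname{dom}\gamma^{*}=\operatorname{dom}\alpha$ whose range is exactly $P\cup D=\operatorname{dom}\beta$: the shifts push finitely many values off along chains so as to free room for the finite set $D$ while keeping the map order-preserving and injective. Dually I would enlarge $\delta_1$ to $\delta^{*}$ with $\operatorname{ran}\delta^{*}=\operatorname{ran}\alpha$ and $\operatorname{dom}\delta^{*}=Q\cup E=\operatorname{ran}\beta$, prescribing $\delta^{*}$ on the new points $E$ so that it undoes the shift and sends each $(d)\beta$ to the $\alpha$-value required by the domain point that $\gamma^{*}$ now routes through $d$. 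Then $\gamma^{*}\beta\delta^{*}$ and $\alpha$ agree on $\operatorname{dom}\alpha$ and are undefined elsewhere, and undoing the auxiliary units restores the general case.

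The routine part is the restriction step of the first paragraph; the real work is the middle construction, namely producing a monotone injective map with the \emph{fixed} cofinite domain $\operatorname{dom}\alpha$ whose range is enlarged by the prescribed finite set $D$, together with the matching enlargement of $\delta_1$, in such a way that monotonicity, injectivity and cofiniteness are all preserved and the triple product is unchanged. The decisive point is that Theorem~\ref{theorem-2.10}$(ii)$ supplies an infinite identity tail in which the shift maps $\varpi$ can absorb the finite discrepancy; the delicate verifications are that each shift stays order-preserving for the product order and that the paired modification of $\delta^{*}$ cancels exactly the modification of $\gamma^{*}$, so that $\gamma^{*}\beta\delta^{*}=\alpha$ and the prescribed domains and ranges hold simultaneously.
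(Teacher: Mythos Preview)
Your restriction step (first paragraph) is exactly what the paper does: it sets $\gamma^{*}=\iota_{\operatorname{dom}\alpha}\gamma\iota_{\operatorname{dom}\beta}$ and $\delta^{*}=\iota_{\operatorname{ran}\beta}\delta\iota_{\operatorname{ran}\alpha}$, checks $\alpha=\gamma^{*}\beta\delta^{*}$, and then simply asserts that all four equalities of domains and ranges follow. You correctly noticed that this only yields $\operatorname{dom}\gamma^{*}=\operatorname{dom}\alpha$, $\operatorname{ran}\delta^{*}=\operatorname{ran}\alpha$ together with the \emph{inclusions} $\operatorname{ran}\gamma^{*}\subseteq\operatorname{dom}\beta$ and $\operatorname{dom}\delta^{*}\subseteq\operatorname{ran}\beta$; the paper's claim that the reverse inclusions also hold is not justified by its construction.

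However, your second stage, the ``enlargement'' of $\gamma_{1},\delta_{1}$ to hit all of $\operatorname{dom}\beta$ and $\operatorname{ran}\beta$, cannot be carried out in general, because the equalities $\operatorname{ran}\gamma^{*}=\operatorname{dom}\beta$ and $\operatorname{dom}\delta^{*}=\operatorname{ran}\beta$ in the lemma as stated are not always attainable. Take $\beta=\mathbb{I}$ and $\alpha=\gamma=\varepsilon_{\{(1,1,1)\}}$, $\delta=\mathbb{I}$; then $\alpha=\gamma\beta\delta$. The lemma would require $\gamma^{*}$ to be a monotone bijection of $\mathbb{N}^{3}\setminus\{(1,1,1)\}$ onto $\mathbb{N}^{3}$ and $\delta^{*}$ a monotone bijection of $\mathbb{N}^{3}$ onto $\mathbb{N}^{3}\setminus\{(1,1,1)\}$ with $\gamma^{*}\delta^{*}=\alpha$ the identity on $\mathbb{N}^{3}\setminus\{(1,1,1)\}$. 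That forces $\delta^{*}=(\gamma^{*})^{-1}$, so $\gamma^{*}$ would be an order \emph{isomorphism} between $\mathbb{N}^{3}\setminus\{(1,1,1)\}$ and $\mathbb{N}^{3}$; but the former poset has three minimal elements while the latter has one, so no such isomorphism exists. Hence no amount of shifting with maps of type $\varpi$ will produce the desired $\gamma^{*},\delta^{*}$, and the ``delicate verifications'' you anticipate will in fact fail. The restriction step alone gives the weaker conclusion with $\subseteq$ in the two middle clauses, and that is all one can prove (and, incidentally, all that the proof of Theorem~\ref{theorem-3.12} actually uses).
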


\begin{proof}[\textsl{Proof}]
For a cofinite subset $A$ of $\mathbb{N}^3$ by $\iota_A$ we denote the identity map of $A$. It is obvious that $\iota_A\in\mathscr{P\!O}\!_{\infty}(\mathbb{N}^3_{\leqslant})$ for any cofinite subset $A$ of $\mathbb{N}^3$. This implies that $\alpha=\iota_{\operatorname{dom}\alpha}\alpha\iota_{\operatorname{ran}\alpha}$ and $\beta=\iota_{\operatorname{dom}\beta}\beta\iota_{\operatorname{ran}\beta}$, and hence we have that
\begin{equation*}
  \alpha=\iota_{\operatorname{dom}\alpha}\alpha\iota_{\operatorname{ran}\alpha}= \iota_{\operatorname{dom}\alpha}\gamma\beta\delta\iota_{\operatorname{ran}\alpha}= \iota_{\operatorname{dom}\alpha}\gamma\iota_{\operatorname{dom}\beta}\beta\iota_{\operatorname{ran}\beta}\delta\iota_{\operatorname{ran}\alpha}.
\end{equation*}
We put $\gamma^*=\iota_{\operatorname{dom}\alpha}\gamma\iota_{\operatorname{dom}\beta}$ and $\delta^*=\iota_{\operatorname{ran}\beta}\delta\iota_{\operatorname{ran}\alpha}$. The above two equalities and the definition of the semigroup operation of $\mathscr{P\!O}\!_{\infty}(\mathbb{N}^3_{\leqslant})$ imply that $\operatorname{dom}\gamma^*\subseteq\operatorname{dom}\alpha$, $\operatorname{ran}\gamma^*\subseteq\operatorname{dom}\beta$, $\operatorname{dom}\delta^*\subseteq\operatorname{ran}\beta$ and $\operatorname{ran}\delta^*\subseteq\operatorname{ran}\alpha$. Similar arguments and the equality $\alpha= \gamma^*\beta\delta^*$ imply the converse inclusions which implies the statement of the lemma.
\end{proof}

\begin{theorem}\label{theorem-3.12}
$\mathscr{D}=\mathscr{J}$ in $\mathscr{P\!O}\!_{\infty}(\mathbb{N}^3_{\leqslant})$.
\end{theorem}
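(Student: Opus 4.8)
Since $\mathscr{D}\subseteq\mathscr{J}$ holds in every semigroup, the whole content is the reverse inclusion $\mathscr{J}\subseteq\mathscr{D}$. Because $\mathscr{P\!O}\!_{\infty}(\mathbb{N}^3_{\leqslant})$ is a monoid we have $S^1=S$, so $\alpha\mathscr{J}\beta$ means there are $\gamma,\delta,\gamma',\delta'\in\mathscr{P\!O}\!_{\infty}(\mathbb{N}^3_{\leqslant})$ with $\alpha=\gamma\beta\delta$ and $\beta=\gamma'\alpha\delta'$. The plan is to manufacture units $\mu,\nu\in H(\mathbb{I})$ with $\alpha=\mu\beta\nu$ and then invoke Theorem~\ref{theorem-3.6}$(iv)$, which characterizes $\mathscr{D}$ exactly by such a two-sided multiplication by units.

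First I would normalize the multipliers. Applying Lemma~\ref{lemma-3.10} to $\alpha=\gamma\beta\delta$ lets me assume $\operatorname{dom}\gamma=\operatorname{dom}\alpha$, $\operatorname{ran}\gamma=\operatorname{dom}\beta$, $\operatorname{dom}\delta=\operatorname{ran}\beta$, $\operatorname{ran}\delta=\operatorname{ran}\alpha$, and applying it to $\beta=\gamma'\alpha\delta'$ gives the analogous equalities for $\gamma',\delta'$. Then $\gamma$ is a bijection $\operatorname{dom}\alpha\to\operatorname{dom}\beta$ and $\gamma'$ a bijection $\operatorname{dom}\beta\to\operatorname{dom}\alpha$, so $\gamma\gamma'$ is a bijection of $\operatorname{dom}\alpha$ onto itself; dually $\delta'\delta$ is a bijection of $\operatorname{ran}\alpha$ onto itself. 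Substituting the second equation into the first yields $\alpha=(\gamma\gamma')\,\alpha\,(\delta'\delta)$.

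Now Lemma~\ref{lemma-3.8}$(iii)$, together with the pointwise agreement established in its proof, applies to this equation: there are units $\sigma_1,\sigma_2\in H(\mathbb{I})$ with $(\overline{x})\gamma\gamma'=(\overline{x})\sigma_1$ for every $\overline{x}\in\operatorname{dom}\alpha$ and $(\overline{x})\delta'\delta=(\overline{x})\sigma_2$ for every $\overline{x}\in\operatorname{ran}\alpha$. Consequently the restriction of $\gamma\cdot(\gamma'\sigma_1^{-1})$ to the cofinite set $\operatorname{dom}\alpha$ is the identity partial map, so Lemma~\ref{lemma-3.9} produces a unit $\mu\in H(\mathbb{I})$ with $(\overline{x})\gamma=(\overline{x})\mu$ for all $\overline{x}\in\operatorname{dom}\alpha$. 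A dual application of Lemma~\ref{lemma-3.9} to $\delta'\cdot(\delta\sigma_2^{-1})$ on $\operatorname{ran}\alpha$ yields a unit $\nu\in H(\mathbb{I})$ with $(\overline{y})\delta=(\overline{y})\nu$ for all $\overline{y}\in\operatorname{ran}\beta$. Finally I would verify $\alpha=\mu\beta\nu$ as an equality of partial maps: since $\gamma=\mu$ on $\operatorname{dom}\alpha$ and $\operatorname{ran}\gamma=\operatorname{dom}\beta$, one has $(\operatorname{dom}\beta)\mu^{-1}=\operatorname{dom}\alpha=\operatorname{dom}(\mu\beta\nu)$, and for $\overline{x}\in\operatorname{dom}\alpha$ the point $(\overline{x})\mu\beta$ lies in $\operatorname{ran}\beta$, where $\delta$ and $\nu$ coincide, so $(\overline{x})\mu\beta\nu=(\overline{x})\gamma\beta\delta=(\overline{x})\alpha$. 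Hence $\alpha=\mu\beta\nu$ and Theorem~\ref{theorem-3.6}$(iv)$ gives $\alpha\mathscr{D}\beta$.

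The \textbf{genuinely nontrivial input} is Lemma~\ref{lemma-3.8}$(iii)$: its proof rests on Theorem~\ref{theorem-2.10}$(i)$ (a $\mathscr{K}_i$-preserving element of the semigroup is pointwise $\leqslant$-decreasing and eventually the identity) combined with the Lagrange-type passage to the sixth power, and it is exactly here that the order geometry of $\mathbb{N}^3_{\leqslant}$ is used. The rest is careful bookkeeping of domains and ranges: Lemma~\ref{lemma-3.10} is what forces $\gamma\gamma'$ and $\delta'\delta$ to be honest self-bijections of $\operatorname{dom}\alpha$ and $\operatorname{ran}\alpha$, and the last step requires checking not only that $\mu,\beta,\nu$ agree pointwise with $\alpha$ but that $\operatorname{dom}(\mu\beta\nu)$ is precisely $\operatorname{dom}\alpha$, with no spurious enlargement coming from the total maps $\mu,\nu$.
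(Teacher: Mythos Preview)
Your proof is correct and follows the same line as the paper: normalize the four multipliers via Lemma~\ref{lemma-3.10}, pass to the sandwich equation $\alpha=(\gamma\gamma')\alpha(\delta'\delta)$, use Lemma~\ref{lemma-3.8} to force the composite multipliers to agree pointwise with units on $\operatorname{dom}\alpha$ and $\operatorname{ran}\alpha$, then use Lemma~\ref{lemma-3.9} to peel off units for $\gamma$ and $\delta$ separately and conclude by Theorem~\ref{theorem-3.6}$(iv)$. The only difference is organizational: the paper splits into the two cases ``$\gamma_\alpha\gamma_\beta$ preserves every $\mathscr{K}_i$'' versus ``it does not'' and applies Lemma~\ref{lemma-3.8}$(i)$ (raising to the sixth power first in the second case), whereas you go straight to Lemma~\ref{lemma-3.8}$(iii)$, which already packages the sixth-power argument, and invoke the pointwise identities obtained in its proof rather than just its stated conclusion.
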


\begin{proof}[\textsl{Proof}]
The inclusion $\mathscr{D}\subseteq\mathscr{J}$ is trivial.

Fix any $\alpha,\beta\in\mathscr{P\!O}\!_{\infty}(\mathbb{N}^3_{\leqslant})$ such that $\alpha\mathscr{J}\beta$. Then there exist $\gamma_\alpha,\delta_\alpha, \gamma_\beta,\delta_\beta\in\mathscr{P\!O}\!_{\infty}(\mathbb{N}^3_{\leqslant})$ such that $\alpha=\gamma_\alpha\beta\delta_\alpha$ and $\beta=\gamma_\beta\alpha\delta_\beta$ (see \cite{GreenJ-1951} or \cite[Section~II.1]{Grillet-1995}). By Lemma~\ref{lemma-3.10} without loss of generality we may assume that
\begin{equation*}
  \operatorname{dom}\gamma_{\alpha}=\operatorname{dom}\alpha, \qquad \operatorname{ran}\gamma_{\alpha}=\operatorname{dom}\beta, \qquad \operatorname{dom}\delta_{\alpha}=\operatorname{ran}\beta, \qquad \operatorname{ran}\delta_{\alpha}=\operatorname{ran}\alpha
\end{equation*}
and
\begin{equation*}
  \operatorname{dom}\gamma_{\beta}=\operatorname{dom}\beta, \qquad \operatorname{ran}\gamma_{\beta}=\operatorname{dom}\alpha, \qquad \operatorname{dom}\delta_{\beta}=\operatorname{ran}\alpha, \qquad \operatorname{ran}\delta_{\beta}=\operatorname{ran}\beta.
\end{equation*}
Hence we have that $\alpha=\gamma_\alpha\gamma_\beta\alpha\delta_\beta\delta_\alpha$ and $\beta=\gamma_\beta\gamma_\alpha\beta\delta_\alpha\delta_\beta$. Then only one of the following cases holds:
\begin{itemize}
  \item[$(1)$] $(\mathscr{K}_i\cap \operatorname{dom}(\gamma_\alpha\gamma_\beta))\gamma_\alpha\gamma_\beta\subseteq \mathscr{K}_i$  for any $i=1,2,3$;
  \item[$(2)$] there exists $i\in\{1,2,3\}$ such that $(\mathscr{K}_i\cap \operatorname{dom}(\gamma_\alpha\gamma_\beta))\gamma_\alpha\gamma_\beta\nsubseteq \mathscr{K}_i$.
\end{itemize}

If case $(1)$ holds then Lemma~\ref{lemma-3.8}$(i)$ implies that $(\gamma_\alpha\gamma_\beta)\colon \operatorname{dom}\alpha\rightharpoonup \mathbb{N}^3$ and $(\delta_\beta\delta_\alpha)\colon \operatorname{ran}\alpha\rightharpoonup \mathbb{N}^3$ are identity partial maps. Now by Lemma~\ref{lemma-3.9} there exist  elements $\sigma_\alpha$ and $\sigma_\beta$ of the group of units $H(\mathbb{I})$ of the semigroup $\mathscr{P\!O}\!_{\infty}(\mathbb{N}^3_{\leqslant})$ such that $(\overline{x})\gamma_\alpha=(\overline{x})\sigma_\alpha$, $(\overline{y})\gamma_\beta=(\overline{y})\sigma^{-1}_\alpha$, $(\overline{u})\delta_\beta=(\overline{u})\sigma_\beta$ and $(\overline{v})\delta_\alpha=(\overline{v})\sigma^{-1}_\beta$,  for all $\overline{x}\in \operatorname{dom}\alpha$, $\overline{y}\in (\operatorname{dom}\alpha)\gamma_\alpha=\operatorname{ran}\gamma_{\alpha}=\operatorname{dom}\beta$, $\overline{u}\in \operatorname{ran}\alpha$ and $\overline{v}\in (\operatorname{ran}\alpha)\delta_\beta=\operatorname{ran}\delta_{\beta}=\operatorname{ran}\beta$. Then the above arguments imply that $\alpha=\sigma_\alpha\beta\sigma^{-1}_\beta$ and hence by Theorem~\ref{theorem-3.6}$(iv)$ we get that $\alpha\mathscr{D}\beta$ in $\mathscr{P\!O}\!_{\infty}(\mathbb{N}^3_{\leqslant})$.

If case $(2)$ holds then we have that
\begin{equation*}
  \alpha=\gamma_\alpha\gamma_\beta\alpha\delta_\beta\delta_\alpha=(\gamma_\alpha\gamma_\beta)^2\alpha(\delta_\beta\delta_\alpha)^2=\ldots= (\gamma_\alpha\gamma_\beta)^6\alpha(\delta_\beta\delta_\alpha)^6
\end{equation*}
and
\begin{equation*}
  \beta=\gamma_\beta\gamma_\alpha\beta\delta_\alpha\delta_\beta=(\gamma_\beta\gamma_\alpha)^2\beta(\delta_\alpha\delta_\beta)^2=\ldots=
  (\gamma_\beta\gamma_\alpha)^6\beta(\delta_\alpha\delta_\beta)^6.
\end{equation*}
We put
\begin{equation*}
  \gamma_\beta^\circ=\gamma_\beta(\gamma_\alpha\gamma_\beta)^5 \qquad \hbox{and} \qquad \delta_\beta^\circ=\delta_\beta(\delta_\alpha\delta_\beta)^5.
\end{equation*}
Lemma~\ref{lemma-3.8}$(i)$ implies that $(\gamma_\alpha\gamma_\beta^\circ)\colon \operatorname{dom}\alpha\rightharpoonup \mathbb{N}^3$ and $(\delta_\beta^\circ\delta_\alpha)\colon \operatorname{ran}\alpha\rightharpoonup \mathbb{N}^3$ are identity partial maps. Now by Lemma~\ref{lemma-3.9} there exist  elements $\sigma_\alpha$ and $\sigma_\beta$ of the group of units $H(\mathbb{I})$ of the semigroup $\mathscr{P\!O}\!_{\infty}(\mathbb{N}^3_{\leqslant})$ such that $(\overline{x})\gamma_\alpha=(\overline{x})\sigma_\alpha$, $(\overline{y})\gamma_\beta^\circ=(\overline{y})\sigma^{-1}_\alpha$, $(\overline{u})\delta_\beta^\circ=(\overline{u})\sigma_\beta$ and $(\overline{v})\delta_\alpha=(\overline{v})\sigma^{-1}_\beta$,  for all $\overline{x}\in \operatorname{dom}\alpha$, $\overline{y}\in (\operatorname{dom}\alpha)\gamma_\alpha=\operatorname{ran}\gamma_{\alpha}=\operatorname{dom}\beta$, $\overline{u}\in \operatorname{ran}\alpha$ and $\overline{v}\in (\operatorname{ran}\alpha)\delta_\beta^\circ=\operatorname{ran}\delta_{\beta}^\circ=\operatorname{ran}\beta$. Then the above arguments imply that $\alpha=\sigma_\alpha\beta\sigma^{-1}_\beta$ and hence by Theorem~\ref{theorem-3.6}$(iv)$ we get that $\alpha\mathscr{D}\beta$ in $\mathscr{P\!O}\!_{\infty}(\mathbb{N}^3_{\leqslant})$.
\end{proof}



\section*{Acknowledgements} 

We thank the referee for many comments and suggestions.

\end{document}